\documentclass[11pt]{amsart}
\usepackage{amsmath, amscd, amsthm} 
\usepackage{amssymb, amsfonts} 
\usepackage{enumerate}
\usepackage{bbm}
\usepackage{color}
\usepackage[all]{xy} 
\usepackage[margin=1.1in]{geometry}
\usepackage{palatino}
\usepackage{mathrsfs}
\usepackage{booktabs}
\usepackage{aliascnt}
\usepackage{mathrsfs}
\usepackage[svgnames]{xcolor} 
\usepackage{hyperref}
\setcounter{secnumdepth}{2}
\setcounter{tocdepth}{1}
\usepackage{mathrsfs}
\usepackage{tikz}
\usepackage{cleveref}
\usepackage[english]{babel}
\usepackage{url}

\usepackage{etoolbox}
\makeatletter
\patchcmd{\l@chapter}{1.0em}{0.8em}{}{}
\makeatother

\subjclass[2010]{Primary: 14F42, 19E15, 55P42, Secondary: 14F45, 55P57}
\keywords{Motivic homotopy theory, stable homotopy at infinity, quadratic invariants}

\title{Stable motivic homotopy theory at infinity}

\author{Adrien Dubouloz}
\address{IMB UMR5584, CNRS, Universit{\'e} de Bourgogne Franche-Comt{\'e}, Dijon, France}
\email{Adrien.Dubouloz@u-bourgogne.fr}
\author{Fr{\'e}d{\'e}ric D{\'e}glise}
\address{ENS de Lyon, UMPA, UMR 5669, 46 all{\'e}e d'Italie, 69364 Lyon Cedex 07, France}
\email{frederic.deglise@ens-lyon.fr}
\author{Paul Arne {\O}stv{\ae}r}
\address{Department of Mathematics, University of Oslo, Norway}
\email{paularneostvar@gmail.com}

\date{\today}
\newtheorem{thm}{Theorem}[subsection]
\newtheorem{prop}[thm]{Proposition}
\newtheorem{lm}[thm]{Lemma}
\newtheorem{cor}[thm]{Corollary}

\theoremstyle{remark}
\newtheorem{rem}[thm]{Remark}

\newtheorem{ex}[thm]{Example}

\theoremstyle{definition}
\newtheorem{df}[thm]{Definition}
\newtheorem{num}[thm]{}

\numberwithin{equation}{thm}

\newtheorem{thm*}{Theorem}



\DeclareMathOperator{\Hom}{Hom}
\DeclareMathOperator{\Spec}{Spec}
\DeclareMathOperator{\uHom}{\underline{Hom}}

\DeclareMathOperator{\Th}{Th}

\DeclareMathOperator{\CHt}{\widetilde{CH}}
\DeclareMathOperator{\GW}{GW}
\DeclareMathOperator{\W}{W}

\DeclareMathOperator{\HM}{\underline H}

\DeclareMathOperator{\Pic}{Pic}
\newcommand{\KMW}{\mathrm K^{MW}}

\newcommand{\cech}{\check{\mathrm S}}
\newcommand{\cecho}{\check{\mathrm S}^{\mathrm{ord}}}

\newcommand{\derR}{\mathbf{R}}

\DeclareMathOperator{\colim}{colim}

\newcommand{\twist}[1]{\langle #1 \rangle}
\newcommand{\dtwist}[1]{\{ #1 \}}


\DeclareMathOperator{\uK}{\underline K} 

\DeclareMathOperator{\codim}{codim}
\DeclareMathOperator{\tdeg}{\widetilde{deg}}

\DeclareMathOperator{\SH}{SH}
\DeclareMathOperator{\DM}{DM}
\DeclareMathOperator{\MMAT}{MM^{AT}}
\DeclareMathOperator{\iSH}{\mathscr{SH}}

\DeclareMathOperator{\T}{\mathscr T} 

\DeclareMathOperator{\Sch}{Sch}
\newcommand{\Sche}{\operatorname{Sch}^{\mathrm{tf}}}
\DeclareMathOperator{\Sm}{Sm}

\newcommand{\HMW}{\mathbb H_{MW}}
\newcommand{\HMWx}[1]{\mathbb H_{MW,#1}}

\DeclareMathOperator{\Id}{Id}


\newcommand{\NN} {\mathbb N}
\newcommand{\ZZ} {\mathbb Z}
\newcommand{\QQ} {\mathbb Q}

\newcommand{\CC} {\mathbb C}
\renewcommand{\AA} {\mathbf A}
\newcommand{\PP} {\mathbf P}

\newcommand{\SL}{\mathbf{SL}}
\newcommand{\E}{\mathbb E}

\newcommand{\KGL}{\mathbf{KGL}}
\newcommand{\uKMW}{\underline{\mathbf K}^{MW}}

\newcommand{\un}{\mathbbm 1} 

\newcommand{\Dinj}{\Delta^{\mathrm{inj}}}    

\newcommand{\htp}{\Pi} 
\newcommand{\cohtp}{\mathrm H} 


\newcommand{\et}{\textrm{\'et}}
\newcommand{\cdh}{{\textrm{cdh}}}

\begin{document}

\begin{abstract}
In this paper, 
we initiate a study of motivic homotopy theory at infinity.
We use the six functor formalism to give an intrinsic definition of the stable motivic homotopy type at infinity 
of an algebraic variety.
Our main computational tools include cdh-descent for normal crossing divisors, Euler classes, Gysin maps, 
and homotopy purity.
Under $\ell$-adic realization, 
the motive at infinity recovers a formula for vanishing cycles due to Rapoport-Zink;
similar results hold for Steenbrink's limiting Hodge structures and Wildeshaus' boundary motives.
Under the topological Betti realization, 
the stable motivic homotopy type at infinity of an algebraic variety recovers the singular complex at infinity 
of the corresponding analytic space.
We coin the notion of homotopically smooth morphisms with respect to a motivic $\infty$-category and 
use it to show a generalization to virtual vector bundles of Morel-Voevodsky's purity theorem, 
which yields an escalated form of Atiyah duality with compact support.
Further we study a quadratic refinement of intersection degrees taking values in motivic cohomotopy groups.
For relative surfaces, 
we show the stable motivic homotopy type at infinity witnesses a quadratic version of Mumford's plumbing construction 
for smooth complex algebraic surfaces.
Our main results are also valid for $\ell$-adic sheaves, mixed Hodge modules, 
and more generally motivic $\infty$-categories.
\end{abstract}

\maketitle

\setcounter{tocdepth}{2}

\vspace{-0.2in}

\tableofcontents

\newpage

\section{Introduction}
\label{section:introduction}

Topology at infinity is essentially the study of topological properties that persistently occur in complements 
of compact sets.
For example, 
a space is intuitively simply connected at infinity if one can collapse loops far away from any 
small subspace. 
Euclidean space ${\bf R}^{n}$, 
$n\geq 3$, 
is the unique open contractible $n$-manifold that is simply connected at infinity.
In particular, 
the Whitehead manifold is not simply connected at infinity and therefore not homeomorphic to ${\bf R}^{3}$.
This article describes our first attempt at extending the theory of topology at infinity for open manifolds 
to smooth affine varieties.
An overriding goal is to develop a study of intrinsic motivic invariants which can distinguish between 
$\AA^{1}$-contractible varieties.
For background on motivic homotopy theory and $\AA^{1}$-contractible varieties we refer to the survey 
\cite{asokostvar}.
Our approach makes extensive use of the six-functor formalism in stable motivic homotopy theory, 
as developed in \cite{Ayoub,CD3}, 
and we assume some familiarity with this material.
\vspace{0.1in}

Let $S$ be a qcqs (quasi-compact quasi-separated) base scheme. 
Its stable motivic homotopy category $\SH(S)$ is a closed symmetric monoidal $\infty$-category, 
see, e.g., \cite{DRO,hoyoisquadratic,JardineMSS,robalo}.
To any separated $S$-scheme of finite type $f\colon X\to S$ we define $\Pi_{S}^{\infty}(X)$, 
the \emph{stable motivic homotopy type at infinity of $X$}, 
by the exact homotopy sequence
\begin{equation}
\label{equation:hes}
\Pi_{S}^{\infty}(X) \rightarrow f_!f^!(\un_S) \xrightarrow{\alpha_X} f_*f^!(\un_S)
\end{equation}
Here $\un_S$ is the motivic sphere spectrum over $S$, 
$f_!f^!(\un_S)$ is the stable homotopy type of $X$, 
$ f_*f^!(\un_S)$ is the properly supported stable homotopy type of $X$.
The canonical morphism $\alpha_X$ is obtained from the six-functor formalism for the stable motivic 
homotopy category $\SH(S)$, 
which implies the following fundamental properties.
\vspace{0.05in}
\begin{itemize}
\item If $X/S$ is smooth, then $f_!f^!(\un_S)=\Sigma^\infty X_+$ is the motivic suspension spectrum of $X$
\vspace{0.05in}
\item If $X/S$ is proper, then $\alpha_X$ is an isomorphism
\vspace{0.05in}
\item The morphism $\alpha_X$ is covariant with respect to proper morphisms and contravariant with respect to 
\'etale morphisms
\end{itemize}

With the intrinsic definition of $\Pi_{S}^{\infty}(X)$ in \eqref{equation:hes} we deduce a number of novel properties 
in the spirit of proper homotopy theory.
Let us fix a compactification $\bar X$ of $X$ over $S$ and denote by $\partial X$ its reduced \emph{boundary}.
Then the induced immersions $j:X \rightarrow \bar X$, $i:\partial X \rightarrow X$ form a diagram of $S$-schemes
\begin{equation}
\label{equation:compactdiagram}
\xymatrix@R=14pt@C=30pt{
X\ar@{^(->}^j[r]\ar_{f}[rd] & \bar X\ar[d]
& \partial X\ar@{_(->}_i[l]\ar^{g}[ld] \\
& S & }
\end{equation}
We observe the stable homotopy type at infinity of $X$ is determined by the data in \eqref{equation:compactdiagram} 
via a canonical equivalence
\begin{equation}
\label{equation:piinfinitycompact}
\Pi_{S}^\infty(X) 
\simeq 
g_*i^*j_*f^!(\un_S)
\end{equation}
This shows $\Pi_{S}^\infty(X)$ is independent of the chosen compactification and our construction has properties 
analogous to Deligne's vanishing cycle functor for \'etale sheaves, 
see \cite{SGA7II}.
We may reformulate \eqref{equation:piinfinitycompact} by means of the canonically induced exact homotopy sequence
\begin{equation}
\label{equation:piinfinityexact}
\Pi_{S}^\infty(X) \to
\Pi_{S}(\partial X) \oplus \Pi_{S}(X) 
\xrightarrow{i_*+j_*} 
\Pi_{S}(\bar X)
\end{equation}

By a closed pair of $S$-schemes $(Y,W)$ we mean a closed immersion $W\not\hookrightarrow Y$ of $S$-schemes, 
and a morphism $\phi\colon (Y,W)\to (X,Z)$ is an $S$-morphism $\phi\colon Y\to X$ such that $\phi^{-1}(Z)=W$.

Suppose $X$, $Y$ are proper $S$-schemes, 
and $\phi$ induces an isomorphism of formal completions
$$
\phi
\colon
\hat Y_W
\xrightarrow \cong
\hat X_Z
$$
It follows that there exists a canonical equivalence in $\SH(S)$
\begin{equation}
\label{equation:analyticinvariance}
\phi_*
\colon
\Pi_{S}^{\infty}(Y-W) 
\xrightarrow{\simeq} 
\Pi_{S}^\infty(X-Z)
\end{equation}
This shows the stable motivic homotopy type at infinity functor $\Pi_{S}^{\infty}(-)$ satisfies analyticial invariance.
The validity of \eqref{equation:analyticinvariance} follows by Wildeshaus' proof of \cite[Theorem 5.1]{Wild1} 
and \eqref{equation:piinfinityexact} (we leave further details to the interested reader).
\vspace{0.1in}

Returning to the notation in \eqref{equation:compactdiagram}, 
let us assume $\bar X$, $\partial X$ are smooth $S$-schemes, 
and write $N$ for the normal bundle of $\partial X$ in $\bar X$.
In \Cref{thm:smoothhes} we use the Euler class $e(N)$ in $\SH(S)$ to deduce the exact homotopy sequence
\begin{equation}
\label{equation:eulerhes}
\Pi_{S}^\infty(X) 
\rightarrow 
\Pi_{S}(\partial X)
\xrightarrow{e(N)} 
\Sigma^\infty\Th_{S}(N)
\end{equation}
It is helpful to think of the passage from \eqref{equation:hes} to \eqref{equation:eulerhes} in the language of 
problem solving.
Our ``problem'' is to understand $\Pi_{S}^\infty(X)$ and the ``solution'' in the smooth case is the Euler class for 
the normal bundle of the closed immersion $\partial X\not\hookrightarrow\bar X$.
\vspace{0.1in}

In the following, we further assume $\bar X$ is a smooth proper $S$-scheme and $\partial X$ is a normal crossing divisor 
on $\bar X$.
We may write $\partial X=\cup_{i \in I} \partial_i X$ as the union of its irreducible components $\partial_i X$, 
so there is a canonical closed immersion $\nu_i:\partial_i X \rightarrow \bar X$.
For any subset $J \subset I$, 
we equip $\partial_J X:=\cap_{j \in J} \partial _j X$ with its reduced subscheme structure,
where $\cap$ is suggestive notation for fiber products over the boundary $\partial X$.
If $J \subset K$, there is a canonical proper morphism $\nu_K^J:\partial_K X \rightarrow \partial_J X$.
By means of descent for the cdh-covering 
$$
\sqcup_{i\in I}\partial_i X\to \partial X
$$ 
we identify $\Pi_{S}(\partial X)$ with the 
colimit\footnote{Limits and colimits in this paper are taken in the sense of $\infty$-categories.}
of the naturally induced diagram in $\SH(S)$
\begin{equation}
\label{equation:partialXnc}
\Pi_{S}(\partial_I X)
\longrightarrow
\bigoplus_{\sharp J= \sharp I - 1} \Pi_{S}(\partial_J X)
\begin{smallmatrix}
\longrightarrow\\
\cdots \\
\longrightarrow 
\end{smallmatrix} 
\bigoplus_{\sharp J= \sharp I - 2} \Pi_{S}(\partial_J X)
\begin{smallmatrix}
\longrightarrow\\
\cdots \\
\longrightarrow 
\end{smallmatrix} 
\cdots
\begin{smallmatrix}
\longrightarrow\\
\cdots \\
\longrightarrow 
\end{smallmatrix} 
\bigoplus_{i\in I} \Pi_{S}(\partial_i X)
\end{equation} 
The face map on the summand $\Pi_{S}(\partial_K X)$ is defined by the pushforward maps
$$
\sum_{J\subset K, \sharp J= \sharp K-1} 
(\nu_{K}^J)_*
$$
Similarly, 
we identify $\Sigma^\infty\Th_{S}(N)$ with the limit of the naturally induced diagram in $\SH(S)$
\begin{equation}
\label{equation:partialXnc2}
\bigoplus_{i\in I} 
\Sigma^\infty\Th_{S}(N_i)
\begin{smallmatrix}
\longrightarrow\\
\cdots \\
\longrightarrow 
\end{smallmatrix} 
\bigoplus_{\sharp J = 2} \Sigma^\infty\Th_{S}(N_J)
\begin{smallmatrix}
\longrightarrow\\
\cdots \\
\longrightarrow 
\end{smallmatrix} 
\bigoplus_{\sharp J = 3} \Sigma^\infty\Th_{S}(N_J)
\begin{smallmatrix}
\longrightarrow\\
\cdots \\
\longrightarrow 
\end{smallmatrix} 
\cdots
\longrightarrow 
\Sigma^\infty\Th_{S}(N_I)
\end{equation}
Here, 
$N_J$ is the normal bundle of $\partial_J X$ in $\bar X$, 
and the coface map on the summand $\Sigma^\infty\Th_{S}(N_K)$ is defined by the Gysin maps
$$
\sum_{J\subset K, \sharp J= \sharp K-1} 
(\nu_{K}^J)^!
$$ 
Our general computations culminate in \Cref{thm:maincomputation}, 
where we identify $\Pi_{S}^\infty(X)$ with the homotopy fiber of the map
$$
\colim_{n \in (\Dinj)^{op}} \left(\bigoplus_{J \subset I, \sharp J=n+1} 
\Pi_{S}(\partial_JX)\right)
\xrightarrow \mu 
\underset{n \in \Dinj}\lim \left(\bigoplus_{J \subset I, \sharp J=m+1} 
\Sigma^\infty\Th_{S}(N_J)\right)
$$
induced by
$$
(\mu_{i,j})_{i,j\in I}
\colon
\bigoplus_{i \in I} \Pi_{S}(\partial_iX) 
\longrightarrow 
\bigoplus_{j \in I} \Sigma^\infty\Th_{S}(N_j)
$$
More precisely, 
$\mu_{i,j}$
is shorthand for the composite map
$$
\Pi_{S}(\partial_iX) 
\xrightarrow{\nu_{i*}}
\Pi_{S}(\bar X) 
\rightarrow
\Sigma^\infty\left(\frac{\bar X}{\bar X\smallsetminus \partial_jX}\right)
\xrightarrow \simeq
\Sigma^\infty\Th_{S}(N_j)
$$
Here, 
for the rightmost map, 
we make use of the homotopy purity equivalence
\begin{equation}
\label{equation:homotopypurity}
\frac{\bar X}{\bar X\smallsetminus \partial_jX}
\simeq
\Th_{S}(N_j)
\end{equation}
for normal bundles of closed immersions \cite[Theorem 3.2.23]{morelvoevodsky}.
\vspace{0.1in}

To refine these techniques we develop a theory of duality with compact support. 
In the process, we generalize the homotopy purity theorem and give new examples of rigid objects.
Our approach is based on the notion of a homotopically smooth morphism.
If $f:X \rightarrow S$ is a smoothable lci morphism with virtual bundle $\tau_f$ over $X$, 
we say that $f$ is \emph{homotopically smooth} (\emph{h-smooth}) if the naturally induced morphism 
$$
\mathfrak p_f:\Th(\tau_f) \rightarrow f^!(\un_S)
$$
is an isomorphism (see \Cref{df:hsmooth} for details).
Any closed immersion between smooth varieties over a field is h-smooth.
When $f$ is h-smooth and $i:Z \rightarrow X$ is a closed immersion with $Z/S$ h-smooth, 
\Cref{thm:generalizedhomotopypurity} shows the relative purity isomorphism
$$
\htp_S(X/X-Z,v) \simeq \htp_S(Z,i^*v+N_i)
$$
Here, 
$v$ is a virtual vector bundle over $X$ and $N_i$ is the (necessarily regular) normal bundle of 
$i:Z \rightarrow X$.
Under the additional assumption that $\htp_S(X,v)$ is rigid, 
we show in \Cref{cor:Atiyah} the duality with compact support isomorphism
$$
\htp_S(X,v)^\vee \simeq \htp_S^c(X,-v-\tau_f)
$$
This duality isomorphism can be seen as a motivic analog of classical topological results due to Atiyah 
\cite[\S3]{atiyah:thomcomplexes}, Milnor-Spanier \cite[Lemma 2]{milnorspanier}.
As an application,
we identify the stable motivic homotopy type at infinity of hyperplane arrangements in \Cref{subsection:smhtaioha}.
\vspace{0.1in}

In \Cref{section:Mumford}, 
we consider the case of a compactified smooth surface whose boundary is a union of rational curves. 
More precisely, 
let $X/S$ be a smooth family of surfaces over $S$ with a smooth proper compactification $\bar X/S$, 
whose boundary $\partial X$ is a normal crossing divisor on $\bar X$,	
see \Cref{df:normal_crossing}. 
We assume in addition that each branch $\partial_i X$, 
with its reduced schematic structure,
is a rational curve over $S$.
This setup gives is a quadratic generalization of Mumford's plumbing game \cite{mumfordihes} using Chow-Witt groups.
While Mumford uses orientations on the normal bundles of the branches,
which are copies of the projective line, 
much of the subtleties in our setting comes from working with twisted Milnor-Witt $K$-theory sheaves in order to 
compute the quadratic degree maps of the intersections of the branches taking values in the Grothendieck-Witt ring.
We decompose $\htp_S(\partial X)$ into its combinatorial part $\mathcal D_X$, 
which is an Artin motivic spectrum over $S$, 
and its geometrical part $\oplus_{i \in I} \un_S(1)[2]$.
In \Cref{thm:smhatoomatrix}, 
we show $\htp^\infty_S(X)$ is the homotopy fiber of a map 
$$
\mathcal D_X \oplus \bigoplus_{i \in I} \un_S(1)[2] 
\xrightarrow{
\begin{pmatrix}
a & b \\
b' & \mu
\end{pmatrix}}
\mathcal D'_X \oplus \bigoplus_{j \in I} \Th(N_j^0)
$$
The map $\mu$ is computed by a 
``quadratic Mumford matrix" defined by the composite
\begin{equation}
\mu_{ij}:
\un_S(1)[2]
\hookrightarrow \htp_S(\partial_{i} X) 
\xrightarrow{\nu_{i*}} \htp_S(\bar X) 
\xrightarrow{\nu_{j}^!} \htp_S(\partial_{j} X,\twist{N_j})
\twoheadrightarrow
\Th(N_j^0) 
\end{equation}

When $S=\Spec(\mathcal O)$ is a semi-local essentially smooth scheme of a field,
we can interpret $\mu_{ij}$ as the class of a quadratic form $(\partial_i X,\partial_j X)_{quad} \in \GW(\mathcal O)$
called the {\it quadratic degree} of the intersections of the divisors $\partial_iX$ and $\partial_j X$.
The close connection with quadratic forms arises since elements of the $i$th Chow-Witt group is represented by 
formal sums of subvarieties $Z$ of codimenison $i$ equipped with an element of $\GW(k(Z))$.
Moreover, 
the rank of the quadratic degree equals the corresponding Mumford degree.
\vspace{0.1in}

Further we specialize our results to motives.
The above formula implies that the motive at infinity, 
i.e., 
Wildeshaus' boundary motive,
of $X$ is an Artin-Tate motive.
When $S$ is a finite field, a global field, or a number ring, 
we have the motivic $t$-structure on rational Artin-Tate motives at our disposal
(see \cite{LevineAT} for the case of fields, and \cite{ScholbachAT} for number rings).
Our main result can be restated (see \Cref{cor:Mumford_ATmotives}) in terms of abelian Artin-Tate mixed motives, 
by asserting the existence of an exact sequence 
\begin{align*}
0 \rightarrow &\HM_3^\infty(X) \rightarrow \bigoplus_{i \in I} \un_S(2)
\xrightarrow{\sum_{i<j} p_{ij}^{i!}-p_{ij}^{j!}} \bigoplus_{i<j} M_S(\partial_{ij} X)(2)  \\
& \rightarrow \HM_2^\infty(X) \rightarrow \bigoplus_{i \in I} \un_S(1) 
\xrightarrow{\ \mu\ } \bigoplus_{j \in I} \un_S(1) \\
& \rightarrow \HM_1^\infty(X) \rightarrow \bigoplus_{i<j} M_S(\partial_{ij} X)
\xrightarrow{\sum_{i<j} p^i_{ij*}-p^j_{ij*}} \bigoplus_{i \in I} \un_S 
\rightarrow \HM_0^\infty(X) \rightarrow 0
\end{align*}
Here, 
$\mu$ is the Mumford 
matrix\footnote{Due to the hypothesis on $S$, one has $\Hom(\un_S(1),\un_S(1))\cong\QQ$.}
and $\HM_i^\infty(X)$ is the $i$th homology motive at infinity of $X$.
One can extract the weight filtration of explicit examples of abelian mixed motives from the above exact sequence.
This gives an abelian motivic extension of Mumford's work to the arithmetic case.
Due to its motivic nature, 
it has the advantage of giving universal formulas in the various realizations
($\ell$-adic, rigid, syntomic,...) of motives. 
For example, 
this informs us about Galois representations attached to $\HM_i(X)$.
\vspace{0.1in}

We illustrate the general theory with examples of $\mathbf{A}^{1}$-equivalent smooth affine surfaces with non-isomorphic 
stable motivic homotopy types at infinity.
For any integer $n>0$, 
the Danielewski surface $D_{n}$ is the closed subscheme of $\mathbf{A}^{3}$ cut out by the equation $x^{n}z=y(y-1)$, 
see \cite{Danielewskisurfaces}.
We note that $D_{1}$ is the Jouanolou device over $\mathbf{P}^{1}$;
in fact, 
$D_{n}$ is $\mathbf{A}^{1}$-equivalent to $\mathbf{P}^{1}$ for all $n$, see \cite[\S 3.4]{asokostvar}.
Over any field $k$, 
one can distinguish between $\Pi_{k}^{\infty}(D_{m})$ and $\Pi_{k}^{\infty}(D_{n})$ for $m\neq n$
by viewing Danielewski surfaces as affine modifications of $\mathbf{A}^{2}$.
We refer to \Cref{subsection:examples} for precise statements and further examples, 
\cite{zbMATH07149737} for background on $\mathbf{A}^{1}$-contractibility of affine modifications, 
and \cite{fieseler} for Fieseler's work on first homology at infinity of Danielewski surfaces over 
the complex numbers.
The technique of affine modifications gives an affirmative answer to Problem 3.4.5 in \cite{asokostvar}.
Our viewpoint, 
however, 
is that $\Pi_{k}^{\infty}$ is the first step towards a refined invariant in 
unstable motivic homotopy theory, 
see \Cref{ex:adA1contractible}.
The problem of defining a notion of unstable motivic homotopy types at infinity captures the tension 
between unstable and stable motivic homotopy theory.
For example, 
the six functor formalism is not available in the unstable setting.
To remedy this one can take into account all possible smooth compactifications.
Nonetheless some of the techniques developed in this paper will carry over to unstable motivic homotopy categories, 
e.g., 
the calculations in \Cref{sec:crossing_sing} hold in the cdh-topology, 
and one can expect more developments along these lines.

\begin{rem}
\label{remark:generality}
This paper's results hold more generally for any motivic $\infty$-category such as 
triangulated and abelian mixed motives, Artin-Tate motives,
\'etale motives, torsion and $\ell$-adic categories, mixed Hodge modules, etc., in place of $\SH$. 
In the case there exists a realization functor that commutes with the six operations, e.g., 
the Betti or $\ell$-adic realizations, then this follows from the universality of $\SH$.
\end{rem}

\subsubsection{Conventions}
Our results are couched in the axiomatic setting of \cite{CD3}, \cite{Khan} which complements \cite{Ayoub}.
We fix a \emph{motivic $\infty$-category} (\cite[Definition 2.4.45]{CD3}) $\T$ over the category of qcqs schemes, 
i.e., 
a \emph{monoidal stable homotopy functor} according to \cite{Ayoub}. 
Our primary example is the motivic stable homotopy category $\SH$.
In the language of presentable stable monoidal $\infty$-categories \cite{Khan}, 
$\SH$ is the initial motivic $\infty$-category.
Thus there is a unique morphism of motivic $\infty$-categories $\SH \rightarrow \T$.
To maintain intuition, we shall refer to the objects of $\T(S)$ as $\T$-spectra over $S$.

\subsubsection{Acknowledgements}

We gratefully acknowledge the support of the Centre for Advanced Study at the Norwegian Academy of Science and Letters in Oslo,
Norway, which funded and hosted our research project ``Motivic Geometry" during the 2020/21 academic year,
and we extend our thanks to the French ``Investissements dAvenir" project ISITE-BFC (ANR-lS-IDEX-OOOB), 
the French ANR project ``FIBALGA" (ANR-18-CE40-0003),
and the RCN Frontier Research Group Project no. 250399 ``Motivic Hopf Equations." 
{\O}stv{\ae}r acknowledges the generous support from Alexander von Humboldt Foundation and 
The Radboud Excellence Initiative.

\section{Preliminaries and notation}
\label{sec:notations}

We work over quasi-coherent and quasi-compact base schemes $S$.
The natural framework for this paper is Morel-Voevodsky's stable homotopy category $\SH(S)$ over $S$.
Owing to the works \cite{Ayoub,Ayoub2}, \cite{CD3}, 
for varying $S$,  
these categories satisfy \emph{Grothendieck' six functor formalism} which we will use extensively.
The elimination of the noetherian hypothesis was achieved in \cite[Appendix C]{hoyoisquadratic}.
We will freely use the terminology and notations of \cite{CD3} (see Section A.5 for a summary).
In particular, 
$\SH(S)$ is a triangulated category and a closed symmetric monoidal category;
moreover, 
the two structures are compatible, 
see e.g., 
\cite{BRTV}, \cite{DRO}, \cite{hoyoisquadratic}, \cite{JardineMSS}, \cite{voevodskyicm}.
\vspace{0.1in}

In \Cref{section:Mumford} we employ the fact that $\SH(S)$ admits a stable $\infty$-categorical enhancement.
This means that the six functors can be viewed as $\infty$-functors, and commutative diagrams involving Grothendieck's 
six functor formalism can be enhanced into homotopy coherent diagrams.
This useful point of view started, 
to our knowledge, 
with the paper \cite{LiuZheng} on \'etale $\ell$-adic sheaves.
Following Lurie's idea, systematic treatment is given in \cite[Chapter 3, Appendix]{GaitRoz}. 
Within the framework of motivic categories,
this was exploited in Khan's thesis \cite{Khan}. 
For a recent reference, 
we refer to \cite[Appendix A]{BRTV}.

As noted in \Cref{remark:generality}, 
our main results are also valid in arbitrary triangulated motivic $\infty$-categories such as the following examples.
\begin{itemize}
\item $\DM_\QQ$ the $\infty$-category of rational mixed motives: see \cite{CD3}
\item $\DM$ the $\infty$-category of integral motives: 
we take as a model the category of modules over Spitzweck motivic cohomology ring spectrum relative to $\ZZ$: 
see \cite{SpiMod}\footnote{This viewpoint was first advocated in \cite{orpaomodulescras,orpaomodules}.
If one restricts to schemes over a prime field $k$ and inverts the characteristic exponent of $k$, 
one can take the so-called $\cdh$-motives as defined in \cite{CD4} (using $\cdh$-sheaves with transfers).}
\item $\DM_\et$ the $\infty$-category of \'etale mixed motives: see \cite{AyoubEt,CD5}
\item $D^b_{c}(-_\et,\ZZ_\ell)$ the $\infty$-category of integral $\ell$-adic \'etale sheaves: 
see \cite{BBD}, \cite[7.2.18]{CD5}
\item $\mathrm{DH}$ the $\infty$-category of motivic Hodge modules,
which should correspond to complexes of Saito's mixed Hodge modules of geometric origin
(obtained by realization of mixed motives): see \cite{Drew}
\end{itemize}

Recall that an object $M$ of a symmetric monoidal category $\mathcal M$ is \emph{rigid} 
(or \emph{strongly dualizable}) with dual $N$ if the functor $(M \otimes -)$ is left and right adjoint to 
$(N \otimes -)$.\footnote{This notion was introduced in \cite{DP},
but it appeared earlier in the theory of pure motives \cite[Prop. 4]{Dem} as a key property in the 
theory of Tannakian categories \cite{Saa}. 
We choose to write rigid since it is shorter than strongly dualizable. 
Note that \emph{dualizable} is ambiguous as it could also refer to the property $DD(M) \simeq M$ for a dualizing 
functor $D$ in Grothendieck-style duality. 
Recall, for example, that for spectra over a scheme of characteristic $0$, the latter property amounts to constructibility  
(see \cite{Ayoub}).}
It is well-known that for $X/S$ smooth and proper, 
with tangent bundle $T_X$,
the spectrum $\Sigma^\infty X_+$ over $S$ is rigid with dual $\Sigma^\infty \Th(-T_X)$, 
see e.g., \cite[2.4.31]{CD3}.\footnote{This is in fact an easy consequence of the six functor formalism, 
as formulated in \cite[Theorem 2.4.50]{CD3}, given that $\Sigma^\infty X_+ \simeq f_!f^!(\un_S)$, 
where $f:X \rightarrow S$ is the projection map:
indeed, one deduces from the projection formula and the purity property that $f_!f^!(\un_S)$ is rigid with dual 
$f_*(\un_X) \simeq f_\sharp(\Th_X(-T_X))$.}

\section{Stable homotopy at infinity}
\label{section:shai}

\subsection{Main definitions and examples}

This section defines the stable motivic homotopy type at infinity for separated schemes defined over a 
quasi-compact quasi-separated (=qcqs) base scheme $S$.
We begin with a ``classical'' definition of stable motivic homotopy types,
based on the analogy with Voevodsky's theory of motivic complexes (\cite[Chapter 5]{FSV}).

\begin{df}
\label{df:smhtdef}
Let $f\colon X\to S$ be a separated morphism.
The stable motivic homotopy type $\htp_{S}(X)$ and the properly supported stable motivic homotopy type 
$\htp_{S}^c(X)$ of $X$ in $\SH(S)$ are defined by 
\begin{align*}
\htp_{S}(X)&:=f_!f^!(\un_S) \\
\htp_{S}^c(X)&:=f_*f^!(\un_S)
\end{align*}
\end{df}
The exchange natural transformation $\alpha_f:f_! \rightarrow f_*$ induces a canonical map
$$
\alpha_X:\htp_{S}(X) \rightarrow \htp_{S}^c(X)
$$

\begin{rem}
Suppose $k$ is a field. 
The corresponding cohomological notation for the motivic complex (resp. properly supported cohomological complex) 
of $X/k$ is $f_*(\un_X)$ (resp. $f_!(\un_X)$). 
\Cref{df:smhtdef} complies with more classical invariants and allows for a duality theory.
\end{rem}

\begin{num}
The standard properties of the six functor formalism for $\SH(S)$ implies:
\begin{itemize}
\item If $X/S$ is smooth then $\htp_{S}(X)=\Sigma^\infty X_+$
\item If $X/S$ is proper then $\alpha_X$ is an isomorphism
\item $\htp_{S}(-)$ (resp. $\htp_{S}^c(-)$) is covariant functorial for all morphisms (resp. proper morphisms) 
and contravariant functorial with respect to \'etale morphisms
\item $\alpha_X$ is natural with respect to proper covariance and \'etale contravariance
\end{itemize}
\end{num}


\begin{df}
\label{df:smhtatinfinitydef}
Let $f\colon X\to S$ be a separated morphism.
The \emph{stable homotopy type at infinity} of $X/S$ is the homotopy fiber of $\alpha_X$ so that there is an 
exact homotopy sequence
$$
\htp_S^\infty(X) \rightarrow \htp_S(X) \xrightarrow{\alpha_X} \htp_S^c(X)
$$
\end{df}

\begin{ex}\textit{Motivic realization}. 
\label{ex:realization}
Suppose $S$ is a perfect field $k$ with characteristic exponent $p$.
In this case, 
see \cite{mhskpao}, \cite{orpaomodules}, 
there is a realization functor to motives
\begin{equation}
\label{equation:motivicrealization}
\SH(k) \rightarrow \DM(k)[1/p]
\end{equation}
If $X\rightarrow \Spec(k)$ is smooth, 
the motivic realization functor sends $\htp_k(X)$ to $M(X)$, 
Voevodsky's homological motive of $X$,
and it sends $\htp_k^c(X)$ to $M^c(X)$, 
Voevodsky's homological motive of $X$ with compact support (see \cite[Proposition 8.10]{CD5}).
It follows that the motivic realization functor sends $\htp_k^\infty(X)$ to \emph{the boundary motive} 
$\partial M(X)$ (see Wildeshaus \cite{Wild1}).
In particular, 
the Betti or $\ell$-adic cohomology of $\htp_k^\infty(X)$ coincides with the so-called \emph{interior cohomology} of $X$.

We generalize the above discussion to arbitrary base schemes in \Cref{section:Mumford}.
\end{ex}

\begin{ex}\textit{Betti realization}. 
\label{ex:realization2}
If the field $k$ admits a complex embedding, 
there is a Betti realization functor to the stable homotopy category
\begin{equation}
\label{equation:bettirealization}
\SH(k) \rightarrow \SH^{top}
\end{equation}
Owing to Ayoub's enhancement of \eqref{equation:bettirealization} to an arbitrary base scheme using the technique 
of analytical sheaves \cite{Ayoub3}, 
one derives that $\htp_S(X)$ realizes to the singular chain complex $S_*(X)$, and $\htp_S^c(X)$ realizes to the 
Borel-Moore singular chain complex $S_*^{BM}(X)$.
\Cref{ex:realization} implies the Betti realization of $\htp_S^\infty(X)$ coincides with the singular complex at infinity, 
$S_*^\infty(X)$, 
whose homology groups are the singular homology groups at infinity $H_*^\infty(X)$.
Indeed, 
for a topological space $W$, 
there is a distinguished triangle of chain complexes of abelian groups
\begin{equation}
\label{eq:top_infty_hlg}
S^\infty_*(W) 
\rightarrow 
S_*(W) 
\xrightarrow{\alpha_{W}} 
S_*^{lf}(W)
\rightarrow 
S^\infty_*(W)[1] 
\end{equation}
Here, 
$S_*(W)$ (resp. $S_*^{lf}(W)$) be the complex of singular chains (resp. locally finite singular chains) on $W$,
see \cite[Chapter 3]{HR}.
If $W$ is locally contractible and $\sigma$-compact,
then $S_*^{lf}(W)$ is quasi-isomorphic to the Borel-Moore complex of $W$.
\end{ex}

\subsection{Computation via compactifications}

\begin{num}
\label{num:thp-infty&compact}
Let $f\colon X\to S$ be a separated morphism.
We fix an arbitrary compactification $\bar X$ of $X$ over $S$ and denote by $\partial X=(\bar X-X)_{red}$ its \emph{boundary}.
Consider the immersions $j:X \rightarrow \bar X$, $i:\partial X \rightarrow X$.  
Using the localization property of $\SH$, 
one derives the exact homotopy sequence
\begin{equation}
\label{equation:localizationhes}
\htp_{S}^c(\partial X) \xrightarrow{i_*} \htp_{S}^c(\bar X) \xrightarrow{j^*} \htp_{S}^c(X)
\end{equation}
Moreover, 
using properness of $\bar X$ and $\partial X$, 
there is a naturally induced commutative diagram with vertical isomorphisms
\begin{equation}
\label{equation:localizationcd}
\xymatrix@=14pt{
\htp_{S}(\partial X)\ar^{i_*}[r]\ar_{\alpha_{\partial X}}^\sim[d]
 & \htp_{S}(\bar X)\ar^{\alpha_{\bar X}}_\sim[d] \\
\htp_{S}^c(\partial X)\ar^{i_*}[r] & \htp_{S}^c(\bar X)
}
\end{equation}

We define the relative stable motivic homotopy type $\htp_{S}(\bar X,X)$ of the pair $(\bar X, X)$ by the exact homotopy sequence
\begin{equation}
\label{equation:pairhes}
\htp_{S}(X) \xrightarrow{j_*} \htp_{S}(\bar X) \xrightarrow \pi \htp_{S}(\bar X,X)
\end{equation}
\end{num}

In the next result, directly inspired by \cite[Theorem 1.6]{Wild2},
we relate \eqref{equation:localizationhes} and \eqref{equation:pairhes} 
to the stable motivic homotopy type at infinity.

\begin{prop}
\label{prop:basic_comput_thp-infty}
There exist canonical maps $\beta$ and $\delta$ in $\SH(S)$ rendering the following diagram of homotopy exact columns 
and rows commutative
\begin{equation}
\label{equation:3by3diagram}
\xymatrix@=20pt
{
0\ar[r]\ar[d] & \htp_{S}(\partial X)\ar^{i_*}[d] \ar@{=}[r] & \htp_{S}(\partial X)\ar^{\beta}[d] \\
\htp_{S}(X)\ar^{j_*}[r]\ar@{=}[d] & \htp_{S}(\bar X)\ar^{j^*}[d]\ar^-{\pi}[r] & \htp_{S}(\bar X,X)\ar^{\delta}[d] \\
\htp_{S}(X)\ar^{\alpha_X}[r] & \htp_{S}^c(X)\ar[r] & \htp_{S}^\infty(X)[1]
}
\end{equation}
The maps $\beta$ and $\delta$ are natural in $(\bar X,X,\partial X)$, 
covariantly functorial with respect to proper maps, 
and contravariantly functorial with respect to \'etale maps (see \ref{num:fpremark}).
\end{prop}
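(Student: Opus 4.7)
The plan is to construct both $\beta$ and $\delta$ simultaneously, and to establish the exactness of the right column, by applying the nine lemma in the stable $\infty$-category $\SH(S)$ (equivalently, the octahedral axiom in triangulated form) to a morphism of cofiber sequences built from the localization property and the defining sequence of $\htp_S^\infty(X)$. The left column $0\to\htp_S(X)\xrightarrow{=}\htp_S(X)$ is trivially a cofiber sequence, and the middle column $\htp_S(\partial X)\xrightarrow{i_*}\htp_S(\bar X)\xrightarrow{j^*}\htp_S^c(X)$ is a cofiber sequence obtained by combining the localization sequence \eqref{equation:localizationhes} for $\htp_S^c$ with the properness identifications $\htp_S(\partial X)\simeq\htp_S^c(\partial X)$ and $\htp_S(\bar X)\simeq\htp_S^c(\bar X)$ from \eqref{equation:localizationcd}. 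The horizontal maps from the left to the middle column are the evident $0\to\htp_S(\partial X)$, $j_*$, and $\alpha_X$.

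For this data to define a morphism of cofiber sequences, I need to verify commutativity of the top square (trivial) and of the square
\begin{equation*}
\xymatrix{
\htp_S(X)\ar[r]^-{j_*}\ar@{=}[d] & \htp_S(\bar X)\ar[d]^{j^*} \\
\htp_S(X)\ar[r]^-{\alpha_X} & \htp_S^c(X).
}
\end{equation*}
This is the main technical content of the proof. I would verify it by writing $f=\bar f\circ j$ with $\bar f$ proper and factoring the natural transformation $\alpha_f : f_!\to f_*$ as $\alpha_{\bar f}(j_*)\circ\bar f_!(\alpha_j)$; since $\alpha_{\bar f}$ is invertible by properness of $\bar f$, the claim reduces to the identification of $\alpha_j$, whiskered by $j^*\bar f^!\un_S$, with the composition of the counit $j_!j^*\to\id$ and the unit $\id\to j_*j^*$, a standard compatibility of the six-functor formalism which lifts coherently to the $\infty$-categorical enhancement used in the paper's conventions.

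Given the morphism of cofiber sequences, the nine lemma produces the right column as horizontal cofibers: the top entry is $\htp_S(\partial X)$ (cofiber of the zero map), the middle entry is $\htp_S(\bar X,X)$ by \eqref{equation:pairhes}, and the bottom entry is $\htp_S^\infty(X)[1]$ by \Cref{df:smhtatinfinitydef}. The induced arrows are canonical and provide $\beta$ and $\delta$; concretely, $\beta=\pi\circ i_*$. Exactness of the right column and commutativity of the two new squares are automatic from the nine lemma. Naturality of $\beta$ and $\delta$ in $(\bar X,X,\partial X)$, together with proper covariance and \'etale contravariance, follows from the naturality of each ingredient ($i_*,j_*,j^*,\pi,\alpha_X$, and the properness equivalences) and the functoriality of cofibers. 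The main obstacle is the coherent commutativity of the key square above; all remaining steps are formal applications of the nine lemma and the six-functor calculus.
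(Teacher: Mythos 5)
Your proof is correct and follows essentially the same route as the paper: both build the $3\times 3$ diagram from the two localization sequences and extract the right-hand column as horizontal cofibers ($\beta=\pi\circ i_*$ included). The only difference is presentational --- the paper assembles the diagram at the level of the endofunctors $i_*i^!$, $j_!j^*$, $i_*i^*$, $j_*j^*$ applied to $\bar f^!(\un_S)$, so that the commutativity of your key square (the compatibility of $\alpha_f$ with the (co)units of $(j_!,j^*,j_*)$) holds by construction rather than requiring the separate verification you outline.
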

\begin{proof}
To define $\beta$ and $\delta$ canonically we rely on the six functors formalism.
By the localization property, 
there exist exact homotopy sequences of functors
\begin{align}
\label{eq:loc1}
i_*i^! &\xrightarrow{ad'_i} 1 \xrightarrow{ad_j} j_*j^* \\
\label{eq:loc2}
j_!j^* &\xrightarrow{ad'_j} 1 \xrightarrow{ad_i} i_*i^*
\end{align}
A combination of \eqref{eq:loc1} and \eqref{eq:loc2} yields the commutative diagram
\begin{equation}
\label{equation:3by3diagramdefinition}
\xymatrix@=34pt{
0\ar[r]\ar[d] & i_*i^!\ar@{=}[r]\ar_{ad'_i}[d] & i_*i^!\ar^{i_*i^*.ad'_i}[d] \\
j_!j^*\ar^{ad'_j}[r]\ar@{=}[d] & 1\ar^{ad_i}[r]\ar_{ad_j}[d] & i_*i^*\ar^{i_*i^*.ad_j}[d] \\
j_!j^*\ar^{ad'_j.j_*j^*}[r] & j_*j^*\ar^{ad_i.j_*j^*}[r] & i_*i^*j_*j^*
}
\end{equation}
Here we us the fact that $i_*$ and $j_*$ are fully faithful, 
so that $j_!j^*j_*j^* \simeq j_!j ^*$ and $i_*i^*i_*i^! \simeq i_*i^!$.
The bottom row is obtained from \eqref{eq:loc2} by post-composition with $j_*j^*$, 
and the rightmost column is obtained from \eqref{eq:loc1} by pre-composition with $i_*i^*$.
Thus, all columns and rows in \eqref{equation:3by3diagramdefinition} are homotopy exact.
Let $\bar f:\bar X \rightarrow S$ be the structure map.
Inserting $\bar f^!(\un_S)$ into \eqref{equation:3by3diagramdefinition} and applying $\bar f_!=\bar f_*$ 
yields the desired diagram \eqref{equation:3by3diagram}. 
\end{proof}

\begin{rem}
\label{rem:pi_infty&functoriality}
In the setting of \Cref{prop:basic_comput_thp-infty} there are structure maps
$$
\xymatrix@R=14pt@C=30pt{
X\ar@{^(->}^j[r]\ar_f[rd] & \bar X\ar^{\bar f}[d]
& \partial X\ar@{_(->}_i[l]\ar^g[ld] \\
& S &
}
$$
The proof of \Cref{prop:basic_comput_thp-infty} gives the following diagram of exact homotopy sequence
$$
\xymatrix@C=30pt@R=14pt{
\htp_{S}(\partial X)\ar^-{\beta}[r]\ar_\sim[d] & \htp_{S}(\bar X,X)\ar^-{\delta}[r]\ar|\sim[d]
& \htp_{S}^\infty(X)[1]\ar^\sim[d] \\
g_!g^!(\un_S)\ar[r]\ar@{=}[d]
& g_!i^*\bar f^!(\un_S)\ar[r]\ar@{=}[d]
& g_*i^*j_*f^!(\un_S)\ar@{=}[d] \\
\bar f_!i_*i^!\bar f^!(\un_S)\ar^{ad'_i}[r]
& \bar f_!i_*i^*\bar f^!(\un_S)\ar^{ad_j}[r] & \bar f_!i_*i^*j_*j^*\bar f^!(\un_S)
}
$$
That is,
$\beta$ (resp. $\delta$) is induced by the counit map $ad'_i$ (resp. unit map $ad_j$) of the adjunction $(i_*,i*)$ 
(resp. $(j^*,j_*)$).
Here we use simplified notation for the morphisms in the lower rows. 
In particular, 
we obtain the following interesting formula for the stable homotopy type at infinity of $X$
\begin{equation}
\htp_{S}^\infty(X) \simeq g_*i^*j_*f^!(\un_S)[-1]
\end{equation}
This shows $\htp_{S}^\infty(-)$ is analogous to the vanishing cycle functor for \'etale sheaves \cite{SGA7II}.
\end{rem}

\begin{num}
\label{num:fpremark}
\textit{Functoriality properties}.
Suppose
$$
\xymatrix@R=14pt@C=20pt{
Y\ar@{^(->}^l[r]\ar_f[d] & \bar Y\ar^{\bar f}[d]
& \partial Y\ar@{_(->}_k[l]\ar^g[d] \\
X\ar@{^(->}^j[r] & \bar X & \partial X \ar@{_(->}_i[l]
}
$$
is a commutative diagram of $S$-schemes, 
where $\bar X$, $\bar Y$ are proper, $i,k$ closed immersions, and $j,l$ open immersions.
Then, if $f$ is proper, there is a commutative diagram
$$
\xymatrix@C=30pt@R=18pt{
\htp_{S}(\partial Y)\ar^{\beta_Y}[r]\ar_{g_*}[d]
& \htp_{S}(\bar Y,Y)\ar^{\delta_Y}[r]\ar|{(\bar f,f)_*}[d]
& \htp_{S}^\infty(Y)[1]\ar^{f_*}[d] \\
\htp_{S}(\partial X)\ar^{\beta_X}[r] & \htp_{S}(\bar X,X)\ar^{\delta_X}[r]
& \htp_{S}^\infty(X)[1]
}
$$
Moreover, if $\bar f$, $f$, $g$ are \'etale, we obtain the commutative diagram
$$
\xymatrix@C=30pt@R=18pt{
\htp_{S}(\partial X)\ar^{\beta_X}[r]\ar_{g^*}[d]
& \htp_{S}(\bar X,X)\ar^{\delta_X}[r]\ar|{(\bar f,f)^*}[d]
& \htp_{S}^\infty(X)[1]\ar^{f^*}[d] \\
\htp_{S}(\partial Y)\ar^{\beta_Y}[r] & \htp_{S}(\bar Y,Y)\ar^{\delta_Y}[r]
& \htp_{S}^\infty(Y)[1]
}
$$
\end{num}

\begin{rem}
Another way of stating \Cref{prop:basic_comput_thp-infty} is that $\htp_{S}^\infty(X)$ 
is the homotopy fiber of the canonical map
\begin{equation}
\label{equation:smhthf}
\htp_{S}(\partial X) \oplus \htp_{S}(X) \xrightarrow{i_*+j_*} \htp_{S}(\bar X)
\end{equation}
Under motivic realization, 
\eqref{equation:smhthf} becomes the fundamental formula for the boundary motive in \cite[Proposition 2.4]{Wild1}.
Following the proof method of  \cite[Theorem 5.1]{Wild1},
we deduce the theorem of analyticial invariance \eqref{equation:analyticinvariance} for $\htp_{S}^{\infty}(-)$.
\end{rem}

\subsection{The case of smooth compactifications}
\label{section:tsc}

\begin{num}
\textit{Abstract Euler classes}.
Recall that for a vector bundle $V$ over a scheme $Z$, 
the (abstract) Euler class\footnote{It can be interpreted as a twisted cohomology class $e(V) \in H^0(Z,\langle V \rangle)$, 
in the $0$-th stable cohomology group, 
which generalizes the well-known Euler classes in oriented and $\SL^c$-oriented cohomology theories.} 
$e(V)$ defined in \cite{DJK} is the map in $\SH(Z)$ 
$$
e(V):\un_Z\simeq \Th(0_X) \xrightarrow{s_*} \Th_Z(V)
$$
induced by the zero section $s$ of $V$.
If $g:Z \rightarrow S$ is smooth, 
the left adjoint $g_\sharp$ to $g^{\ast}$ yields by abuse of notation a map
$$
e(V):\htp_{S}(Z) \rightarrow \Th(V)
$$
in $\SH(S)$.  
We develop a technique for computing stable motivic homotopy types at infinity involving abstract Euler classes 
for normal bundles.
\end{num}

\begin{thm}
\label{thm:smoothhes}
Let $X$ be an $S$-scheme with compactification $\bar X$ and boundary $\partial X=\bar X-X$.
Suppose $\bar X$, $\partial X$ are smooth $S$-schemes, 
and let $N$ denote the normal bundle of $\partial X$ in $\bar X$.
Then there exists an exact  homotopy sequence in $\SH(S)$
\begin{equation}
\label{equation:smoothhes}
\htp_{S}^\infty(X) 
\rightarrow \htp_{S}(\partial X)
\xrightarrow{e(N)} \Th(N)
\end{equation}
\end{thm}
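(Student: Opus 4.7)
The plan is to derive the desired exact sequence directly from \Cref{prop:basic_comput_thp-infty} by identifying the relevant objects and maps under the smoothness hypotheses. From the rightmost column of diagram \eqref{equation:3by3diagram}, I extract the exact homotopy sequence
$$
\htp_{S}^\infty(X) \to \htp_{S}(\partial X) \xrightarrow{\beta} \htp_{S}(\bar X, X),
$$
so that $\htp_{S}^\infty(X)$ is canonically the homotopy fiber of $\beta$. It therefore suffices to identify $\htp_{S}(\bar X, X)$ with $\Th(N)$ and the map $\beta$ with the Euler class $e(N)$.

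Since $\bar X$ and $\partial X$ are smooth over $S$, the open complement $X = \bar X \setminus \partial X$ is also smooth over $S$, so by the standard properties recalled after \Cref{df:smhtdef} we have $\htp_{S}(\bar X) \simeq \Sigma^\infty \bar X_+$ and $\htp_{S}(X) \simeq \Sigma^\infty X_+$. The defining cofiber sequence \eqref{equation:pairhes} then identifies $\htp_{S}(\bar X, X)$ with $\Sigma^\infty(\bar X / (\bar X \setminus \partial X))$. As $i \colon \partial X \hookrightarrow \bar X$ is a regular closed immersion between smooth $S$-schemes with normal bundle $N$, Morel-Voevodsky's homotopy purity \eqref{equation:homotopypurity} yields a canonical equivalence
$$
\htp_{S}(\bar X, X) \simeq \Sigma^\infty\!\left(\frac{\bar X}{\bar X \setminus \partial X}\right) \simeq \Th(N).
$$

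For the remaining identification of $\beta$ with $e(N)$ under this equivalence, I would use the fact that, by the very construction in the proof of \Cref{prop:basic_comput_thp-infty} (see also \Cref{rem:pi_infty&functoriality}), $\beta$ is the composite $\htp_{S}(\partial X) \xrightarrow{i_*} \htp_{S}(\bar X) \xrightarrow{\pi} \htp_{S}(\bar X, X)$ of the Gysin pushforward with the canonical cofiber projection. After invoking homotopy purity, this composite becomes the image under $g_\sharp$ of the map $\un_{\partial X} \to \Th_{\partial X}(N)$ induced by the zero section of $N$, which is by definition the Euler class $e(N)$.

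The principal subtlety lies in this last identification. Both $\beta$ and $e(N)$ admit clean descriptions, but matching them amounts to the compatibility between the counit $ad'_i$ of the $(i_*,i^*)$-adjunction, which underpins the abstract definition of $\beta$, and the geometric zero section of $N$, which underpins the Euler class of \cite{DJK}. This compatibility is essentially built into the deformation to the normal cone used in Morel-Voevodsky's proof of \eqref{equation:homotopypurity}; unwinding the corresponding diagram of six-functor natural transformations is the step the proof would need to make explicit.
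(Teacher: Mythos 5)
Your proposal follows essentially the same route as the paper: reduce to identifying $\beta_X\colon\htp_{S}(\partial X)\to\htp_{S}(\bar X,X)$ via \Cref{prop:basic_comput_thp-infty}, use smoothness and homotopy purity to identify the target with $\Th(N)$, and then recognize the resulting composite $i^*i_*$ as the Euler class. The one step you leave open --- matching the six-functor counit description of $\beta$ with the geometric zero section --- is exactly what the paper closes in one line by invoking the excess (self-)intersection formula of \cite[3.2.6]{DJK}, which states that $i^*i_*$ equals multiplication by $e(N)$; citing that result is cleaner than re-unwinding the deformation to the normal cone.
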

\begin{proof}
The exact homotopy sequence \eqref{equation:smoothhes} follows from Proposition \ref{prop:basic_comput_thp-infty} and an 
analysis of $\beta_{X}\colon\htp_{S}(\partial X) \rightarrow \htp_{S}(\bar X,X)\simeq \Sigma^\infty(\bar X/\bar X-\partial X)$.
Owing to \cite{DJK}, the latter map coincides with the composite
\begin{equation}
\label{equation:smoothhes2}
\Sigma^\infty \partial X_+ \xrightarrow{i_*} \Sigma ^\infty \bar X_+
\rightarrow \Sigma^\infty(\bar X/\bar X-\partial X)
\end{equation}
Homotopy purity for closed embeddings \cite[Theorem 3.2.23]{morelvoevodsky} identifies \eqref{equation:smoothhes2} with 
\begin{equation}
\label{equation:smoothhes3}
\Sigma^\infty \partial X_+ 
\xrightarrow{i_*} \Sigma ^\infty \bar X_+
\xrightarrow{i^*} \Th(N)
\end{equation}
The excess intersection formula in \cite[3.2.6]{DJK} equates the composite $i^*i_*$ in \eqref{equation:smoothhes3} with the 
abstract Euler class $e(N)$.
\end{proof}

\begin{cor}
Under the assumptions in \Cref{thm:smoothhes} there exists a canonical isomorphism
$$
\htp_{S}^{\infty}(X) \simeq \htp_{S}(N^\times)
$$
Here, 
$N^\times$ is the complement of the zero section of the normal bundle $N$.
If $N$ contains a trivial direct factor,
or more generally the Euler class $e(N)=0$,
then 
$$
\htp_{S}^{\infty}(X) \simeq \htp_{S}(\partial X) \oplus \Th(N)[-1]
$$
Therefore, if $N$ is trivial of rank $d$, we have
$$
\htp_{S}^{\infty}(X) \simeq \htp_{S}(\partial X) \oplus \un(d)[2d-1]
$$
\end{cor}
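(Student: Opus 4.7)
My plan is to refine the exact homotopy sequence of \Cref{thm:smoothhes} by computing the homotopy fiber of its Euler-class map, first abstractly and then under the splitting hypotheses.

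First, I would analyze the zero section $s\colon \partial X\hookrightarrow N$. Since $\partial X$, $N$ and the complement $N^\times$ are all smooth over $S$, homotopy purity \cite[Theorem 3.2.23]{morelvoevodsky} applied to $s$ (whose normal bundle in $N$ is canonically $N$ itself) produces a cofiber sequence in $\SH(S)$
\begin{equation*}
\htp_S(N^\times)\longrightarrow \htp_S(N)\longrightarrow \Th(N).
\end{equation*}
By $\AA^1$-invariance, the bundle projection yields an equivalence $\htp_S(N)\simeq \htp_S(\partial X)$ whose inverse is induced by $s$, and under this identification the rightmost arrow is by construction \cite{DJK} the abstract Euler class $e(N)$. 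Comparing with the fiber sequence of \Cref{thm:smoothhes} yields the canonical isomorphism $\htp_S^\infty(X)\simeq \htp_S(N^\times)$.

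For the splitting assertions, I would invoke the standard stable fact that a fiber sequence $A\to B\xrightarrow{f}C$ with $f$ null-homotopic canonically decomposes as $A\simeq B\oplus C[-1]$; applied to the Euler-class fiber sequence under the hypothesis $e(N)=0$ this immediately gives $\htp_S^\infty(X)\simeq \htp_S(\partial X)\oplus \Th(N)[-1]$. To reduce the case of a trivial direct factor to this, I would observe that such a factor produces a nowhere-vanishing section $\sigma\colon \partial X\to N^\times$; the composite $\htp_S(\partial X)\xrightarrow{\sigma_*}\htp_S(N^\times)\to \htp_S(N)\xrightarrow{\sim}\htp_S(\partial X)$ is the identity (since $p\circ\sigma=\Id$), so $e(N)$ factors through $\htp_S(N^\times)\to \Th(N)$, which is null by definition of the Thom space; this furnishes the required null-homotopy.

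Finally, for trivial $N$ of rank $d$, the identification $\Th(\mathcal O_{\partial X}^{\oplus d})\simeq T^{\wedge d}\wedge (\partial X)_+$ in $\SH(S)$ together with $T\simeq \un(1)[2]$ computes $\Th(N)[-1]$ in terms of the Tate twist and shift, and substitution into the preceding decomposition yields the stated formula. The step I expect to require most care is the identification of the boundary map in the purity cofiber sequence with the abstract Euler class $e(N)$: this requires tracing the definition of $e(N)$ via the pushforward $s_*\colon \un_{\partial X}\to \Th_{\partial X}(N)$ through the formalism of \cite{DJK}, but the argument runs parallel to the excess intersection computation \cite[3.2.6]{DJK} already invoked in the proof of \Cref{thm:smoothhes}.
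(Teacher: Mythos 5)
Your argument for the first two assertions is correct, and since the paper states this corollary without proof it is surely the intended one: rotate the cofiber sequence of the open pair $(N,N^\times)$ and match it against the fiber sequence of \Cref{thm:smoothhes}. Two small remarks on the execution. First, the step you flag as delicate is in fact essentially definitional: with the paper's conventions $\Th(N)$ \emph{is} $\Sigma^\infty(N/N^\times)$, so the sequence $\htp_S(N^\times)\to\htp_S(N)\to\Th(N)$ is just the cofiber sequence of the pair (purity is not needed here), and the abstract Euler class is \emph{defined} as $g_\sharp$ applied to the zero-section map $\un_{\partial X}\xrightarrow{s_*}\Th(N)$, i.e.\ literally the composite $\htp_S(\partial X)\xrightarrow{s_*}\htp_S(N)\to\Th(N)$; no excess-intersection tracing is required. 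Second, in the trivial-direct-factor step the inference ``so $e(N)$ factors through $\htp_S(N^\times)$'' should be spelled out: both $s_*$ and $\iota_*\circ\sigma_*$ (with $\iota:N^\times\hookrightarrow N$) are right inverses to the equivalence $p_*$, hence homotopic, so $e(N)=q\circ s_*\simeq q\circ\iota_*\circ\sigma_*=0$ because $q\circ\iota_*$ is the composite of two consecutive maps in a cofiber sequence. This is what you mean, but as written the implication is elliptical.

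The one genuine discrepancy is in the final formula. Your (correct) identification $\Th_S(\mathcal O_{\partial X}^{\oplus d})\simeq \Sigma^\infty\partial X_+\wedge T^{\wedge d}\simeq\htp_S(\partial X)(d)[2d]$, substituted into the splitting, gives
$$
\htp_S^\infty(X)\simeq\htp_S(\partial X)\oplus\htp_S(\partial X)(d)[2d-1],
$$
not $\htp_S(\partial X)\oplus\un(d)[2d-1]$: the factor $\partial X_+$ does not disappear under $g_\sharp$. The displayed formula of the corollary holds only when $g:\partial X\to S$ is an isomorphism (e.g.\ $X=\AA^1\subset\PP^1$), and as stated for general $\partial X$ it appears to be a typo in the paper. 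Your proof papers over this by asserting that the substitution ``yields the stated formula''; you should instead record the corrected target $\htp_S(\partial X)(d)[2d-1]$ or the extra hypothesis under which the paper's version is literally true.
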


\begin{ex}
\label{ex:affinespace}
For $d>0$ the stable homotopy type at infinity of the affine space $\AA^{d}$ is given by 
$$
\htp_{S}^{\infty}(\AA^{d}) \simeq \un \oplus \un(d)[2d-1]
$$
This follows since $\AA^{d}$'s Euler class vanishes.
\end{ex}

\begin{rem}
We note that $\htp_k^{\infty}(X)$ is a strictly finer invariant than its motivic realization, 
the boundary motive $\partial M(X)$ in Example \ref{ex:realization}.
Indeed, \Cref{thm:smoothhes} shows there is an exact homotopy sequence in $\DM(k)[1/p]$
$$
\partial M(X) \rightarrow M(\partial X)
\xrightarrow{\tilde c_r(N)} M(\partial X)(r)[2r]
$$
Here, 
$r$ is the rank of $N$ and the map $\tilde c_r(N)$ is induced by multiplication with the top Chern class 
$c_r(N) \in \mathrm{CH}^r(\partial X) \simeq  \Hom(M(\partial X),\un(r)[2r])$.
In particular, 
the sequence splits if $c_r(N)=0$.
However, 
the vanishing of the homotopy Euler class $e(N)$, 
which implies the vanishing of the Euler class in Chow-Witt groups, 
is a strictly stronger condition than the vanishing of the top Chern class.
For the smooth affine quadric $5$-fold $Q\colon x_1y_1+x_2y_2+x_3y_3=1$ in $\AA^{6}$, 
the kernel of the surjection $(x_1,x_2,x_3): k[Q]^{3}\rightarrow k[Q]$ defines a nontrivial and 
stably trivial vector bundle $\xi$ of rank $2$ on $Q$.
Since $\det(\xi)$ is trivial, $\xi$ is orientable. 
While $\xi$'s Chern classes are trivial, 
$\xi$'s Euler class in $\CHt^{2}(Q)=\mathrm{K}_{-1}^{\mathrm{MW}}(k)$ equals $\eta$, 
see the case $n=2$ in \cite[Lemma 3.5]{zbMATH06349725}. 
\end{rem}

\section{Duality and fundamental class of the diagonal}
\label{section:Atiyah}

In this section, 
we develop computational techniques based on the notion of homotopically smooth morphisms 
with respect to a motivic $\infty$-category.
Our method involves duality with compact support instead of compactifications, 
as in \Cref{section:shai}.
In the process, we generalize the Morel-Voevodsky homotopy purity theorem \cite{morelvoevodsky} and give 
new examples of rigid objects;
see \cite[Remark 8.2]{motiviclandweber} for examples of compact non-rigid objects.

\subsection{Homotopical smoothness and generalized purity}

\begin{num}\label{num:Thom_sapces}
In the setting of stable motivic homotopy,
we shall use the technique of twisted Thom spaces to deduce the duality results we use in our computations.
Recall the Thom space functor, 
from vector bundles over a scheme $X$ to the pointed homotopy category over $X$, 
is given by
$$
\Th:E \mapsto \Sigma^\infty \Th(E), \Th(E)=E/E^\times
$$
It sends direct sums to tensor products. 
Owing to \cite[Remark 2.4.15]{CD3} this functor admits an extension to a monoidal functor,
called the Thom spectrum functor
$$
\Th:\uK(X) \rightarrow (\Pic(\SH(X)),\otimes)
$$
from the Picard category of virtual bundles over $X$ to the monoidal category of $\otimes$-invertible objects in $\SH(X)$.
When $v=\langle E \rangle$ is the virtual bundle associated to a vector bundle, 
one has $\Th(v)=\Sigma^\infty \Th(E)$.
Given a smooth morphism $p:X \rightarrow S$ it is sometimes convenient to simply ``forget the base" of Thom spectra;
we set $\Th_S(v):=p_\sharp \Th(v)$.

Using this observation, we extend \Cref{df:smhtdef} as follows.
\end{num}
\begin{df}\label{df:internal_theories}
Let $f:X \rightarrow S$ be a separable morphism and $v$ a virtual vector bundle over $X$.
One associates to $X/S$ and $v$ the following motivic spectra over $S$:
\begin{itemize}
\item Homotopy: $\htp_S(X,v)=f_!(\Th(v) \otimes f^!(\un_S))$
\item Cohomotopy: $\cohtp_S(X,v)=f_*(\Th(v) \otimes f^*(\un_S)) \simeq f_*(\Th(v))$
\item Borel-Moore (or (co)compactly supported) homotopy: $\htp^c_S(X,v)=f_*(\Th(v) \otimes f^!(\un_S))$
\item Compactly supported cohomotopy: $\cohtp^c_S(X,v)=f_!(\Th(v) \otimes f^*(\un_S)) \simeq f_!(\Th(v))$
\end{itemize}
\end{df}

\Cref{df:internal_theories} specializes to \Cref{df:smhtdef} in the case of the trivial vector bundle $v=0$, 
i.e., 
we have $\htp_S(X,0)=\htp_S(X)$ and $\htp^c_S(X,0)=\htp^c_S(X)$.
As in the previous case, 
the natural transformation $\alpha_f:f_! \rightarrow f_*$ yields canonical maps:
\begin{itemize}
\item $\alpha_{X/S}:\htp_S(X,v) \rightarrow \htp^c_S(X,v)$
\item $\alpha'_{X/S}:\cohtp^c_S(X,v) \rightarrow \cohtp_S(X,v)$ (``forgetting compact support")
\end{itemize}
Both $\alpha_{X/S}$ and $\alpha'_{X/S}$ are isomorphisms whenever $X/S$ is proper.

\begin{rem}
\Cref{df:internal_theories} can be adapted to an arbitrary triangulated motivic category $\T$ in the sense of \cite{CD3}. 
This generality is not strictly needed if there exists a realization functor $\SH(S) \rightarrow \T$ commuting with the 
six operations, 
as in the case of motives $\DM_{t}(S)$.
Twisting by vector bundles is not pertinent for motives since there exists well-behaved Thom isomorphisms 
$\Th(v) \simeq \un_S(r)[2r]$, 
where $r$ is the rank of $v$.\footnote{On the contrary, 
in motivic homotopy theory the consideration of twists is indispensable.}
The four theories in \Cref{df:internal_theories} realize to the homological motive $M_S(X)$,
the cohomological motive $h_S(X)$ (``Chow motive" when $X/S$ is smooth proper),
the ``homological motive with compact support" $M^c_S(X)$ in Voevodsky's terminology, 
and the cohomological motive with compact support $h_S^c(X)$, 
respectively.
\end{rem}

\begin{num}
\label{num:natural_fct} 
\textit{Natural functoriality}:
Let $f:Y \rightarrow X$ be a separable morphism of $S$-schemes.
Our choice of terminology can be explained by the following naturally induced maps:
\begin{itemize}
\item $f_*:\htp_S(Y,f^{-1}v) \rightarrow \htp_S(X,v)$
\item $f^*:\cohtp_S(X,v) \rightarrow \cohtp_S(Y,f^{-1}v)$
\item $f_*:\htp^c_S(Y,f^{-1}v) \rightarrow \htp^c_S(X,v)$
\item $f^*:\cohtp_S(X,v) \rightarrow \cohtp_S(Y,f^{-1}v)$
\end{itemize}
In addition, 
when $f$ is proper then the comparison maps $\alpha_{X/S}$ and $\alpha'_{X/S}$ are compatible with 
$f_*$, $f^*$, $f_*$, $f^*$.
\end{num}

\begin{rem}\label{rem:functoriality}
Note, 
in particular, 
that homotopy twisted by some virtual bundle $w$ on $Y$,
with or without compact support, 
is not covariant functorial unless $w$ is the pullback of some virtual bundle on $X$.
\end{rem}

\begin{num}
\label{num:Gysin} 
\textit{Exceptional functoriality (Gysin maps)}: 
Due to the existence of the fundamental classes introduced in \cite{DJK} the four theories in 
\Cref{df:internal_theories} satisfy exceptional functoriality.\footnote{See \cite[4.3.4]{DJK} 
for the general case of a triangulated motivic category.}
Let $f:Y \rightarrow X$ be a smoothable lci morphism, 
i.e., 
$f$ factors as a regular closed immersion followed by a smooth morphism, 
with cotangent complex $L_f$.
We refer to the associated virtual bundle $\tau_f$ as the virtual tangent bundle of $f$.
One deduces, 
from the system of fundamental classes in \cite[Theorem 3.3.2]{DJK}, 
the canonical natural transformation
\begin{equation}
\label{eq:fdl_class}
\mathfrak p_f:\Th(\tau_f) \otimes f^* \rightarrow f^!
\end{equation}
By adjunction, 
one deduces trace and cotrace maps (see \textsection 4.3.4 in \emph{loc. cit.})
\begin{align*}
\mathrm{tr}_f&:f_!(\Th(\tau_f) \otimes f^*) \rightarrow Id \\
\mathrm{cotr}_f&:Id \rightarrow f_*(\Th(-\tau_f) \otimes f^!)
\end{align*}
These maps induce the following \emph{Gysin maps}:
\begin{itemize}
\item $f^!:\htp_S(X,v) \rightarrow \htp_S(Y,f^{-1}v+\tau_f)$, when $f$ is proper
\item $f_!:\cohtp_S(Y,f^{-1}v+\tau_f) \rightarrow \cohtp_S(X,v)$, when $f$ is proper
\item $f^!:\htp^c_S(X,v) \rightarrow \htp^c_S(Y,f^{-1}v+\tau_f)$
\item $f_!:\cohtp^c_S(Y,f^{-1}v+\tau_f) \rightarrow \cohtp^c_S(X,v)$
\end{itemize}
Again, 
assuming $f$ is proper, 
the comparison maps $\alpha_{X/S}$ and $\alpha'_{X/S}$ are compatible with the above Gysin morphisms in the obvious sense.
\end{num}

The following definition is a variant of \cite[Definition 4.3.7]{DJK}.

\begin{df}
\label{df:hsmooth}
Let $f:X \rightarrow S$ be a smoothable lci morphism with virtual bundle $\tau_f$ over $X$ associated 
to the cotangent complex $L_f$.
We say that $f$ is \emph{homotopically smooth} (\emph{h-smooth}) with respect to the motivic $\infty$-category 
$\T$ if the natural transformation \eqref{eq:fdl_class} evaluated at the sphere spectrum $\un_S$
$$
\mathfrak p_f:\Th(\tau_f) \rightarrow f^!(\un_S)
$$
is an isomorphism.
\end{df}

\begin{num}\label{num:hsmooth}
One gets the following basic properties of h-smoothness:
considering composable lci smoothable morphisms $f$, $g$, $h=f \circ g$ (which is also lci smoothable),
if $f$ and $g$ (resp. $f$ and $h$) are h-smooth, 
then so is $h$ (resp. $g$). 
Moreover, 
if $g^!$ is conservative, $g$ and $h$ being h-smooth implies $f$ is h-smooth. 
On the other hand, h-smoothness is not stable under base change.
\end{num}

\begin{ex}
Here are examples, in order of difficulty, of cases where $f:X \rightarrow S$ is h-smooth:
\begin{itemize}
\item $f$ is smooth
\item $X$, $S$ are smooth over some base $B$ and $f$ is a morphism of $B$-schemes
\item $X$, $S$ are regular over a field $k$ and $\T$ is continuous, see \cite[Appendix A]{DFJK}
(all our examples are continuous in this sense)
\end{itemize}
Note, in particular, that a closed immersion between smooth varieties over a field is h-smooth.
On the other hand, not all regular closed immersions are smooth.
For example, 
the immersion $i:D \rightarrow X$ of a relative normal crossing divisor $D$ over $S$ into a smooth $S$-scheme 
in the sense of Definition \ref{df:normal_crossing} is not h-smooth unless $D$ is smooth over $S$.
Indeed, 
in \Cref{section:Mumford}, 
we show
$$
i^!(\un_X) \simeq \colim_{n \in \Dinj} \bigoplus_{J \subset I, \sharp J=n} i_{J!}(\Th(N_J))
$$
which does not coincide with $\Th(N_DX)$ if there is a non-trivial intersection on the branches of $D$ 
(consider the pullback to any such intersection).
\end{ex}

\begin{rem}
It is expected that any morphism between regular schemes is h-smooth with respect to $\SH(S)$.
The said assertion holds and is called ``absolute purity" in the case when $\T$ is
$\SH(S) \otimes \QQ$ (\cite[Theorem 3.6]{DFJK}),
rational mixed motives $\DM_\QQ(S)$ (\cite[14.4.1]{CD3}),
and \'etale integral motives $\DM_\et(S)$ (\cite[5.6.2]{CD4}).
\end{rem}

\begin{ex}\label{ex:purity&comparison}
The h-smoothness hypothesis allows one to compare the four different theories from Definition \ref{df:internal_theories} and generalize the smooth case. 
More precisely, 
if $f:X \rightarrow S$ is h-smooth, the purity isomorphism induces isomorphisms
\begin{align*}
\htp_S(X,v)=f_!\big(\Th(v) \otimes f^!(\un_S)\big) &\xrightarrow{\mathfrak p_f} f_!\big(\Th(v) \otimes 
\Th(\tau_f)\big)=\cohtp_S^c(X,v+\tau_f) \\
\htp^c_S(X,v)=f_*\big(\Th(v) \otimes f^!(\un_S)\big) &\xrightarrow{\mathfrak p_f} f_*\big(\Th(v) \otimes 
\Th(\tau_f)\big)=\cohtp_S(X,v+\tau_f)
\end{align*}
Moreover, 
these isomorphisms transform the usual functoriality (resp. Gysin maps) in the source to the Gysin maps 
(resp. usual functoriality) on the target. 
This follows by considering the ``associativity formula" for fundamental classes in \cite[Theorem 3.3.2]{DJK}.
\end{ex}

Given this new notion of h-smoothness, 
we extend the Morel-Voevodsky homotopy purity theorem as follows.\footnote{When $Z$ has crossing singularities,  
a more aesthetically pleasing formulation will be given in \Cref{thm:generalizedhomotopypurity2}.}
\begin{thm}
\label{thm:generalizedhomotopypurity}
Let $f:X \rightarrow S$ be an h-smooth morphism with virtual tangent bundle $\tau_f$,
and $v$ a virtual vector bundle over $X$.
Let $i:Z \rightarrow X$ be a closed immersion and set $g=f \circ i$.
We let $\Pi_S(X/X-Z,v)$ denote the homotopy cofiber of the canonical map $j_*:\Pi_S(X-Z,v) \rightarrow \Pi_S(X,v)$.

Then the purity isomorphism $\mathfrak p_f$ induces an isomorphism
$$
\htp_S(X/X-Z,v) \simeq \cohtp_S^c(Z,i^*v+i^*\tau_f)
$$
Moreover, 
if $Z/S$ is h-smooth and $N_i$ denotes the normal bundle associated with the (necessarily regular) closed immersion $i$, 
there exists a (relative) purity isomorphism
$$
\htp_S(X/X-Z,v) \simeq \htp_S(Z,i^*v+N_i)
$$
\end{thm}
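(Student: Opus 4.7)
The strategy is to read off both isomorphisms from the localization cofiber sequence attached to the closed--open pair $(Z, X - Z)$, and to invoke the h-smoothness hypotheses only at the end in order to replace exceptional inverse image functors by Thom spectra. Throughout, write $M = \Th(v) \otimes f^!(\un_S)$ and $j\colon X - Z \hookrightarrow X$ for the open complement of $i$.

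For the first isomorphism, I would start from the localization triangle $j_!j^*M \to M \to i_*i^*M$ in $\T(X)$ and apply $f_!$. Since $j$ is \'etale (so that $j^* = j^!$ and $j^*f^! = (fj)^!$) and $i$ is a closed immersion (so that $f_! i_* = f_! i_! = g_!$), the resulting cofiber sequence reads
$$
\htp_S(X-Z,\, j^*v) \xrightarrow{j_*} \htp_S(X,v) \to g_!\bigl(\Th(i^*v) \otimes i^*f^!(\un_S)\bigr),
$$
whose rightmost term is $\htp_S(X/X-Z,v)$ by definition of the relative homotopy type. The h-smoothness of $f$ supplies the purity isomorphism $\mathfrak p_f\colon \Th(\tau_f) \xrightarrow{\simeq} f^!(\un_S)$; pulling back along $i$ yields $i^*f^!(\un_S) \simeq \Th(i^*\tau_f)$, whence the rightmost term becomes $g_!\Th(i^*v + i^*\tau_f) = \cohtp^c_S(Z,\, i^*v + i^*\tau_f)$. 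This gives the first stated isomorphism.

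For the second isomorphism, I would combine the first with the h-smoothness of $g$. By Example \ref{ex:purity&comparison}, for every virtual bundle $w$ on $Z$ the purity isomorphism $\mathfrak p_g$ induces a canonical equivalence $\htp_S(Z,w) \simeq \cohtp^c_S(Z,\, w + \tau_g)$. It therefore suffices to verify the identity $\tau_g = i^*\tau_f - N_i$ in $\uK(Z)$, since then $i^*v + i^*\tau_f = (i^*v + N_i) + \tau_g$ and the comparison isomorphism delivers $\cohtp^c_S(Z,\, i^*v + i^*\tau_f) \simeq \htp_S(Z,\, i^*v + N_i)$. The relation $\tau_g = i^*\tau_f - N_i$ is obtained by passing to virtual classes in the distinguished triangle of cotangent complexes $i^*L_f \to L_g \to L_i$ and using $L_i \simeq N_i^\vee[1]$ for a regular closed immersion.

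The main technical point demanding care is the preliminary observation that, under the hypotheses, $i$ is necessarily a regular closed immersion, so that $N_i$ is defined and the identification $L_i \simeq N_i^\vee[1]$ is genuinely available. The standard argument runs through the cotangent triangle above: since $f$ and $g$ are smoothable lci, $L_f$ and $L_g$ are perfect of tor-amplitude $[-1,0]$, which forces $L_i$ to be a locally free sheaf placed in degree $-1$. With this in hand, the two previous paragraphs assemble into the full proof, the only remaining bookkeeping being the sign convention that normalizes $\tau_{(-)}$ to be additive on composites and to send a regular closed immersion to $-[N_i]$.
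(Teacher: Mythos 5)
Your argument is correct and follows essentially the same route as the paper's: the localization triangle $j_!j^!\to \mathrm{Id}\to i_*i^*$ evaluated on $\Th(v)\otimes f^!(\un_S)$ and pushed forward by $f_!$, then the purity isomorphism $\mathfrak p_f$ for the first statement, and the comparison isomorphism of Example \ref{ex:purity&comparison} for $Z/S$ together with the identity $\tau_g=i^*\tau_f-N_i$ for the second. Your added justification that $i$ is automatically a regular closed immersion via the cotangent-complex triangle is a detail the paper leaves implicit but is consistent with its argument.
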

\begin{proof}
For the closed immersion $i$ we have the associated localization homotopy exact sequence 
$$
j_!j^! \rightarrow Id \rightarrow i_*i^*
$$
By inserting $\Th(v) \otimes f^!(\un_S)$ and applying $f_!$ we get the homotopy exact sequence
$$
\Pi_S(X-Z,j^*v) \rightarrow \Pi_S(X,v) \rightarrow q_!\big(\Th(i^*v) \otimes i^*f^!(\un_S)\big)
$$
Here, 
we use the identifications
\begin{align*}
f_!j_!j^!\big(\Th(v) \otimes f^!(\un_S)\big) &\simeq 
h_!\big(\Th(j^*v) \otimes j^!f^!(\un_S)\big)=\Pi_S(X-Z,j^*v) \\
f_!i_*i^*\big(\Th(v) \otimes f^!(\un_S)\big) & \simeq 
f_!i_*\big(\Th(i^*v) \otimes i^*f^!(\un_S)\big)=q_!\big(\Th(i^*v) \otimes i^*f^!(\un_S)\big)
\end{align*}
In particular, 
there is an isomorphism
$$
\htp_S(X/X-Z,v) \simeq q_!\big(\Th(i^*v) \otimes i^*f^!(\un_S)\big)
$$
The purity isomorphism yields the desired isomorphism
\begin{align*}
\htp_S(X/X-Z,v) &\simeq q_!\big(\Th(i^*v) \otimes i^*f^!(\un_S)\big) \\
& \xrightarrow{\mathfrak p_f} q_!\big(\Th(i^*v) \otimes i^*\Th(\tau_f) \otimes f^*(\un_S)\big) \\
& = q_!\big(\Th(i^*v+i^*\tau_f)\big)=\cohtp_S^c(Z,i^*v+i^*\tau_f)
\end{align*}
As explained in Example \ref{ex:purity&comparison}, 
the second isomorphism in \Cref{thm:generalizedhomotopypurity} is now a direct consequence of the 
h-smoothness property of $Z/S$.
\end{proof}

\subsection{Homotopy type at infinity and the fundamental class of the diagonal}

\begin{lm}\label{lm:pre-duality}
Let $f:X \rightarrow S$ be a separated morphism and $v$ a virtual vector bundle over $X$.
\begin{enumerate}
\item  There exists a canonical isomorphism
$$
\uHom(\cohtp_S^c(X,v),\un_S) \xrightarrow \simeq \htp_S^c(X,-v)
$$
The isomorphism is functorial in $X$ for both the natural functoriality with respect to proper maps 
(\ref{num:natural_fct}) and the Gysin morphisms with respect to smoothable lci morphisms (\ref{num:Gysin}).
\item Assume, in addition, that $f$ is h-smooth (\ref{df:hsmooth}).
Then there exists an isomorphism
$$
\uHom(\htp_S(X,v),\un_S) \xrightarrow \simeq \cohtp_S(X,-v)
$$
which is again functorial with respect to the natural functoriality and the Gysin maps.
\end{enumerate}
\end{lm}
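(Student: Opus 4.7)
The strategy is to derive both isomorphisms formally from the $(f_!, f^!)$ adjunction together with the $\otimes$-invertibility of Thom spectra, and then to verify the functoriality compatibilities via the standard coherences of the six-functor formalism. Unfolding definitions for part (1), one has $\cohtp_S^c(X,v) = f_!\Th(v)$ and $\htp_S^c(X,-v) = f_*(\Th(-v) \otimes f^!\un_S)$. The adjunction $(f_!, f^!)$ yields the internal-hom identity $\uHom(f_! A, B) \simeq f_* \uHom(A, f^! B)$; applying it with $A = \Th(v)$ and $B = \un_S$ reduces the claim to identifying $\uHom(\Th(v), f^!\un_S)$ with $\Th(-v) \otimes f^!\un_S$, which holds because $\Th(v) \in \Pic(\T(X))$ is $\otimes$-invertible with inverse $\Th(-v)$ by \ref{num:Thom_sapces}. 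For part (2), I would first invoke the h-smoothness hypothesis to rewrite $f^!\un_S \simeq \Th(\tau_f)$, so that $\htp_S(X,v) \simeq f_!\Th(v + \tau_f)$. The same computation with $A = \Th(v + \tau_f)$ produces $f_*(\Th(-v - \tau_f) \otimes f^!\un_S)$; a second application of h-smoothness collapses $\Th(-\tau_f) \otimes f^!\un_S$ to $\un_X$, leaving $f_*\Th(-v) = \cohtp_S(X,-v)$.

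For the functoriality claims I would treat the two kinds of maps in parallel. Natural pushforward along a proper $p \colon Y \to X$ is induced by the counit of $(p_! = p_*, p^!)$; dualizing the four theories term by term and invoking the base change and projection formulas shows that $\uHom(-, \un_S)$ converts the pushforward on one side into precisely the natural map listed in \ref{num:natural_fct} on the other. For a smoothable lci morphism $p$, the Gysin maps on each of the four theories are built from the fundamental class $\mathfrak p_p \colon \Th(\tau_p) \otimes p^* \Rightarrow p^!$ of \cite{DJK} together with its induced trace and cotrace morphisms. Since $\uHom(-, \un_S)$ interchanges units with counits, dualizing trace with cotrace swaps a Gysin pushforward on one theory with a Gysin pullback on its dual. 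The associativity of the system $\{\mathfrak p_p\}$ from \cite[Theorem 3.3.2]{DJK} then delivers compatibility under composition.

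The main obstacle, as expected, is not the construction of the two isomorphisms — those are essentially formal — but the homotopy-coherent verification of the functoriality squares. The substantive point is a compatibility between the $(f_!, f^!)$ adjunction unit/counit and the fundamental class $\mathfrak p_f$, which is encoded in the DJK axiomatics; assembling these checks as strictly commuting diagrams requires the $\infty$-categorical enhancement of the six functors noted in Section \ref{sec:notations}. Once that coherence is in hand, all the cases of natural and Gysin functoriality fall out by the same dualization pattern applied to the unit/counit and the trace/cotrace of $\mathfrak p_f$.
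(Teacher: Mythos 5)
Your proof is correct and follows essentially the same route as the paper's: both parts reduce to the internal adjunction identity $\uHom(f_!A,\un_S)\simeq f_*\uHom(A,f^!(\un_S))$ combined with the $\otimes$-invertibility of Thom spectra, with the functoriality statements deferred to the coherences of the six-functor formalism. The only divergence is in part (2), where the paper does not substitute $f^!(\un_S)\simeq\Th(\tau_f)$ up front but instead uses only the weaker consequence that the canonical map $\un_X\to\uHom\big(f^!(\un_S),f^!(\un_S)\big)$ is an isomorphism --- the ``pre-dualizing'' condition isolated in the remark immediately following the lemma --- so your double application of purity is valid but consumes slightly more of the h-smoothness hypothesis than the paper's argument needs.
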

\begin{proof}
To prove the isomorphism in (1) we use
\begin{align*}
\uHom(\cohtp_S^c(X,v),\un_S)=\uHom(f_!(\Th(v)),\un_S) & \xrightarrow{(a)} f_*\uHom\big(\Th(v),f^!(\un_S)\big) \\
& \stackrel{(b)} \simeq f_*\big(\Th(-v) \otimes f^!(\un_S)\big)=\htp_S^c(X,-v)
\end{align*}
Here, 
(a) (resp. (b)) follows from the internal interpretation of the fact that $f^!$ is right adjoint to $f_!$ 
(resp. $\Th(v)$ is $\otimes$-invertible).
The functoriality statement is obvious by construction.

To deduce (2), we establish the isomorphisms 
\begin{align*}
\uHom(\htp_S(X,v),\un_S)=\uHom\big(f_!\big(\Th(v) \otimes f^!(\un_S)\big),\un_S\big) 
& \xrightarrow{(a)} f_*\uHom\big(\Th(v) \otimes f^!(\un_S),f^!(\un_S)\big) \\
& \stackrel{(b)} \simeq f_*\Big(\Th(-v) \otimes \uHom\big(f^!(\un_S),f^!(\un_S)\big)\Big), \\
& \stackrel{(c)} \simeq f_*\big(\Th(-v) \otimes \un_S\big)=\cohtp_S(X,-v)
\end{align*}
Here, 
(a) and (b) are justified as in (1), 
and (c) follows from the fact that the canonical map
\begin{equation}\label{eq:pre-dualizing}
\un_X \rightarrow \uHom\big(f^!(\un_S),f^!(\un_S)\big)
\end{equation}
obtained by adjunction is an isomorphism because $f$ is h-smooth. 
The functoriality statement follows from the definitions.
\end{proof}

\begin{rem}
We say that $f$ is a \emph{pre-dualizing} morphism if the map \eqref{eq:pre-dualizing} is an isomorphism.
This condition implies the isomorphism in (2).
The notion of a pre-dualizing morphism is crucially linked with Grothendieck-Verdier duality, as shown by \cite[4.4.11]{CD3}. 
In fact, 
if $f^!(\un_S)$ is a dualizing object (\cite[Definition 4.4.4]{CD3}), 
it follows that $f$ is pre-dualizing.
Assuming that $S$ is a smooth $\QQ$-scheme, 
it follows from \cite{Ayoub} that any separated morphism $f:X \rightarrow S$ is pre-dualizing. 
\end{rem}

\begin{thm}\label{thm:diagonal}
Let $f:X \rightarrow S$ be an h-smooth morphism with virtual tangent bundle $\tau_f$.
Then the purity isomorphism $\mathfrak p_f$ induces a canonical isomorphism
$$
\uHom\big(\htp_S(X,v),\un_S\big) \simeq \htp_S^c(X,-v-\tau_f)
$$
In addition, assume that $f$ is smooth.
Then the map $\alpha'_f$ obtained by adjunction from the composite
$$
\htp_S(X) \xrightarrow{\alpha_f} \htp^c_S(X) \simeq \uHom\big(\htp_S(X,-\tau_f),\un_S\big)
$$
fits into the commutative diagram
$$
\xymatrix@=20pt{
\htp_S(X) \otimes \htp_S(X,-\tau_f)\ar^-{\alpha'_f}[r]\ar_\simeq[d] & \un_S \\
\htp_S(X \times_S X,-p_1^*\tau_f)\ar^-{\delta^!}[r] & \htp_S(X)\ar_{f_*}[u]
}
$$
where the left vertical map is the K\"unneth isomorphism.\footnote{Which in our case
is a tautology by definition of the tensor product of spectra and the fact that $X/S$ is smooth.}
\end{thm}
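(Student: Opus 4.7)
The first isomorphism is a direct consequence of two earlier results. Since $f$ is h-smooth, Lemma \ref{lm:pre-duality}(2) yields a natural identification $\uHom(\htp_S(X,v),\un_S) \simeq \cohtp_S(X,-v)$, while Example \ref{ex:purity&comparison} gives the purity comparison $\cohtp_S(X,-v) \simeq \htp_S^c(X,-v-\tau_f)$ (obtained by applying $\Th(-v-\tau_f) \otimes f_*(-)$ to the isomorphism $\Th(\tau_f) \xrightarrow{\mathfrak p_f} f^!\un_S$ together with the projection formula). Composing these two maps yields the first claim, and the naturality in the diagram of the second claim will follow because both isomorphisms are built functorially from $\mathfrak p_f$ and the internal-Hom adjunction.

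For the second statement I assume $f$ smooth, so that $f_\sharp \dashv f^*$ exists and $\htp_S(X,w) \simeq f_\sharp\Th(w)$ for every virtual bundle $w$. In particular, the K\"unneth isomorphism appearing in the diagram is the tautological identification $f_\sharp\un_X \otimes f_\sharp\Th(-\tau_f) \simeq (f\times f)_\sharp(\un_X \boxtimes \Th(-\tau_f)) = \htp_S(X\times_S X,-p_1^*\tau_f)$, obtained from the fact that $f_\sharp$ is symmetric monoidal on Thom twists (equivalently, from $(f\times f)^* \simeq f^*\boxtimes f^*$). Under the duality of the first part (applied with $v=-\tau_f$) the map $\alpha'_f$ is, by construction, the pairing adjoint to $\alpha_f$, so the task is to identify it with $f_* \circ \delta^!$ applied after the K\"unneth identification.

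To do this I would unwind the evaluation $\mathrm{ev}\colon \htp_S^c(X)\otimes\htp_S(X,-\tau_f) \to \un_S$ produced in Lemma \ref{lm:pre-duality}(2). Its construction factors through the canonical isomorphism $\un_X \simeq \uHom(f^!\un_S,f^!\un_S)$ given by h-smoothness, which via $\mathfrak p_f$ becomes the trivial pairing $\Th(-\tau_f)\otimes\Th(\tau_f) \simeq \un_X$. Pushing this through the $f_* f_!$-exchange and the base-change square associated to $f\times f$, one rewrites the external-product form of $\mathrm{ev}$ as the trace map $\delta_!\delta^!\to\mathrm{id}$ applied to $p_1^!\un_S$ (followed by $f_*$): this is the familiar incarnation of Grothendieck-Verdier duality as the fundamental class of the diagonal. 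The key input is the associativity/functoriality of fundamental classes from \cite[Theorem 3.3.2]{DJK} applied to the factorization $\mathrm{id}_X = p_1 \circ \delta$, which forces the twist of $\delta^!$ to compensate $-p_1^*\tau_f$ and produce an untwisted object of $\htp_S(X)$. Finally, since $\alpha_f$ is obtained from the exchange $f_!\to f_*$ evaluated at $f^!\un_S$, pre-composing the identification above with $\alpha_f\otimes\mathrm{id}$ yields exactly the claimed factorization $\alpha'_f = f_* \circ \delta^! \circ \mathrm{K\"unneth}$.

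The main obstacle is coherent bookkeeping: tracking the twists through the K\"unneth identification, verifying that the purity isomorphism $\mathfrak p_f$ intertwines the abstractly defined $\mathrm{ev}$ with the Gysin pullback $\delta^!$, and checking that the various exchange morphisms $f_!\boxtimes f_!\to (f\times f)_!$, $f_*\otimes f_!\to f_!(f^*(-)\otimes -)$, and the base-change comparisons for $\delta$ fit together into the single commutative diagram stated. Everything else is a formal consequence of the six-functor formalism together with the h-smoothness of $f$.
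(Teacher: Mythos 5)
Your proposal is correct and follows essentially the same route as the paper: the first isomorphism is obtained exactly as you do, by composing Lemma \ref{lm:pre-duality}(2) with the purity comparison of Example \ref{ex:purity&comparison}, and the commutativity of the square rests, as you identify, on the associativity of fundamental classes from \cite[Theorem 3.3.2]{DJK} applied to $\mathrm{id}_X = p_1\circ\delta$ (i.e.\ $\eta_\delta\cdot\eta_{p_1}=1$). The only input you leave implicit under ``base-change comparisons'' that the paper states explicitly is that smoothness of $f$ makes the square for $X\times_S X$ Tor-independent, so the transversal base change formula of \emph{loc.\ cit.} gives $\Delta^*(\eta_f)=\eta_{p_1}$, which is what lets the fundamental class of $f$ appearing in $\mathfrak p_f$ be traded for that of the projection.
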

If $f$ is smooth, 
the above commutative diagram shows the map $\alpha'_f$ can be computed under the canonical isomorphism
\begin{align*}
[\htp_S(X),\htp^c_S(X)] \simeq &[\htp_S(X) \otimes \htp_S(X,-\tau_f),\un_S] \\
 &\simeq [X \times_S X,\Th_S(p_1^*)]=:\Pi^{2d,d}(X \times_S X,p_1^*(\tau_f))
\end{align*}
as the fundamental class of the diagonal
$$
\eta_{X \times_S X}(\Delta_{X/S})=\delta_*(1)
$$

\begin{rem}
We note that when $f$ is an h-smooth closed immersion $i:Z \rightarrow X$, 
the diagonal $\delta:Z \rightarrow Z \times_S Z$ is an isomorphism.
In particular, the normal bundle of $\delta$ is unrelated to the virtual tangent space $-N_ZS$ of $Z/S$.
For this reason, the preceding formula cannot be true for arbitrary h-smooth morphisms. 
We expect ideas in derived algebraic geometry can fix this defect.
\end{rem}

\begin{proof}
The first isomorphism is a combination of the second isomorphism in \Cref{lm:pre-duality} 
and the second isomorphism of Example \ref{ex:purity&comparison}
$$
\uHom\big(\htp_S(X,v),\un_S\big) \simeq \cohtp_S(X,-v)
\stackrel{\mathfrak p_f^{-1}} \simeq \htp^c_S(X,-v-\tau_f)
$$
The commutativity of the square follows from the following facts:
\begin{itemize}
\item If $f_1:X \times_S X \rightarrow X$ is the projection on the first factor,
the associativity formula in \cite[Theorem 3.3.2]{DJK} shows there is an equality of fundamental classes 
$\eta_\delta.\eta_{f_1}=1$
\item The assumption that $f$ is smooth implies the cartesian square
$$
\xymatrix@=14pt{
X \times_S X\ar^-{f_1}[r]\ar_{f_2}[d]\ar@{}|\Delta[rd] & X\ar^f[d] \\
X\ar_f[r] & S
}
$$
is Tor-independent. 
Thus the transversal base change formula in \cite[Theorem 3.3.2]{DJK} implies the equality $\Delta^*(\eta_f)=\eta_{f_1}$
\end{itemize}
\end{proof}

\Cref{thm:diagonal} implies the following more potent form of duality 
(see the end of \Cref{sec:notations} for a review).
We give concrete examples in \Cref{thm:strong_duality}.

\begin{cor}[Duality with compact support]\label{cor:Atiyah}
Let $X$ be an h-smooth scheme over $S$ and $v$ a virtual bundle over $X$ such that $\htp_S(X,v)$ is rigid.
Then there exists an isomorphism
$$
\htp_S(X,v)^\vee \simeq \htp_S^c(X,-v-\tau_f)
$$
In particular, 
the Borel-Moore homotopy object $\htp_S^c(X,-v-\tau_f)$ is rigid.
\end{cor}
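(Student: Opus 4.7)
The plan is to observe that this corollary is essentially a direct consequence of \Cref{thm:diagonal} combined with the general categorical characterization of duals of rigid objects. The only real content is unpacking the word ``rigid''.

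First I would recall that in any closed symmetric monoidal $\infty$-category, whenever an object $M$ is rigid (i.e.\ strongly dualizable), its strong dual $M^\vee$ is canonically computed by the internal mapping object $\uHom(M,\un)$. This is a standard fact: from the coevaluation and evaluation maps characterizing strong duality, one checks that $\uHom(M,-) \simeq M^\vee \otimes (-)$, and in particular $\uHom(M,\un) \simeq M^\vee$. Applying this to $M = \htp_S(X,v)$, which is rigid by assumption, identifies the left-hand side $\htp_S(X,v)^\vee$ with $\uHom(\htp_S(X,v),\un_S)$.

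Second, I would directly invoke the isomorphism
$$
\uHom\bigl(\htp_S(X,v),\un_S\bigr) \simeq \htp_S^c(X,-v-\tau_f)
$$
from the first part of \Cref{thm:diagonal}, which requires only the h-smoothness of $f\colon X\to S$. Chaining this with the previous identification yields the desired equivalence $\htp_S(X,v)^\vee \simeq \htp_S^c(X,-v-\tau_f)$.

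For the ``in particular'' assertion, I would use the elementary categorical fact that the strong dual of any rigid object is again rigid, with double-dual naturally isomorphic to the original object. Transporting this structure along the isomorphism established above shows $\htp_S^c(X,-v-\tau_f)$ is rigid, with dual $\htp_S(X,v)$. There is no serious obstacle here; the substance of the corollary has been absorbed into \Cref{thm:diagonal}, whose proof already combined the internal-hom calculation of \Cref{lm:pre-duality}(2) (using h-smoothness via the isomorphism $\un_X \simeq \uHom(f^!\un_S,f^!\un_S)$) with the purity comparison $\cohtp_S(X,-v) \simeq \htp_S^c(X,-v-\tau_f)$ from \Cref{ex:purity&comparison}.
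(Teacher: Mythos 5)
Your proposal is correct and matches the paper's (implicit) argument exactly: the corollary is stated as a direct consequence of \Cref{thm:diagonal}, obtained by identifying the strong dual of the rigid object $\htp_S(X,v)$ with $\uHom(\htp_S(X,v),\un_S)$ and then applying the theorem's first isomorphism, with rigidity of $\htp_S^c(X,-v-\tau_f)$ following since the dual of a rigid object is rigid. No gaps.
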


\begin{ex}
\label{ex:adA1contractible}
Suppose $X$ is a $d$-dimensional smooth affine $\AA^{1}$-contractible $k$-scheme.
Owing to Atiyah duality in \Cref{thm:diagonal} there is an exact homotopy sequence
\begin{equation}
\label{equation:smoothaffinecontractible}
\Pi^{\infty}_{k}(X) \to \Pi_{k}(X) \to \Pi_{k}(X,-T_X)^{\vee}
\end{equation}
The assumption on $X$ implies the tangent bundle $T_X$ is trivial.
Since $\Pi_{k}(X)\simeq \un$ we deduce the equivalences
\[
\Pi_{k}(X,-T_X)^{\vee}
\simeq
(\Pi_{k}(X)(-d)[-2d])^{\vee}
\simeq
\Pi_{k}(X)(d)[2d]
\simeq 
\un(d)[2d]
\]
Morel's connectivity theorem shows the map $\un \to \un(d)[2d]$ is zero for $d>0$.
Hence there is an equivalence 
\[
\Pi^{\infty}_{k}(X)
\simeq
\un\oplus \un(d)[2d-1]
\]
In conclusion, 
the stable motivic homotopy type at infinity cannot distinguish between smooth affine $\AA^{1}$-contractible 
$k$-schemes of the same dimension.
This is analogous to the classification of Euclidean spaces among open contractible manifolds, 
see \cite{asokostvar},
which cannot be achieved by invariants of the stable topological homotopy category.
\end{ex}

\subsection{Stable motivic homotopy type at infinity of hyperplane arrangements}
\label{subsection:smhtaioha}

\begin{num}
In the following, 
we use \Cref{thm:diagonal} to compute the stable motivic homotopy type at infinity of hyperplane arrangements.
We say a smooth $S$-scheme $X$ with structural morphism $f$ is \emph{stably $\AA^1$-contractible} if 
$f_*:\Sigma^\infty X_+ \rightarrow \un_S$ is an equivalence.
Vector bundles give basic examples over $S$.
To illustrate the preceding results, we determine the stable homotopy of a normal crossing $S$-scheme.
\end{num}

\begin{prop}
\label{prop:complementdivisor}
Let $S$ be a stably $\AA^1$-contractible smooth scheme over a field $k$,
and $X$ be a smooth affine and stably $\AA^1$-contractible $S$-scheme of dimension $d$.
Suppose the closed subscheme $D=\cup_{i \in I} D_i$ of $X$ is a smooth reduced crossing $S$-scheme in the sense of 
\Cref{df:normal_crossing}.
In addition, assume that for any $J \subset I$, 
every connected component of $D_J$ is stably $\AA^1$-contractible over $S$.
For a subset $J \subset I$ we set $n_J=\sharp J$, 
and for any generic point $x$ of $D_J$ we let $c_J(x)$ denote the codimension of $x$ in $X$.

Then there exists a canonical isomorphism
$$
\Pi_S(X-D) \simeq \bigoplus_{J \subset I, x \in D_J^{(0)}} \un_S\big(c_J(x)\big)\big[2c_J(x)-n_J\big]
$$
If $D$ is a normal crossing divisor
and $m(n)$ is the sum of the number of connected components of all codimension $n$ subschemes $D_J$ of $X$, 
then the isomorphism takes the form
$$
\Pi_S(X-D) \simeq \bigoplus_{n=0}^d m(n).\un_S(n)[n]
$$
\end{prop}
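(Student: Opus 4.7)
The plan is to apply the localization cofiber sequence for the open immersion $j\colon X - D \hookrightarrow X$,
\[
\htp_S(X-D) \longrightarrow \htp_S(X) \longrightarrow \Sigma^\infty_S\bigl(X/(X-D)\bigr),
\]
and to simplify both remaining terms using the contractibility hypotheses. Since $X$ is stably $\AA^1$-contractible over $S$, the middle term identifies with $\un_S$, so the core of the argument is to compute the quotient spectrum on the right and analyze the boundary map.

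To compute the quotient, I would combine cdh-descent along the closed cover $\sqcup_{i \in I} D_i \to D$ with generalized homotopy purity (\Cref{thm:generalizedhomotopypurity}) applied to each smooth stratum $D_J \hookrightarrow X$. In the spirit of equation~(1.13) of the introduction, this should produce a totalization
\[
\Sigma^\infty_S\bigl(X/(X-D)\bigr) \;\simeq\; \lim_{[n] \in \Dinj}\, \bigoplus_{|J|=n+1} \Sigma^\infty \Th_S(N_J),
\]
whose coface maps are the Gysin morphisms $(\nu_K^J)^!$ of the closed immersions $\nu_K^J\colon D_K \hookrightarrow D_J$. The stable $\AA^1$-contractibility of every connected component of each $D_J$ then trivializes each Thom spectrum, yielding
\[
\Sigma^\infty \Th_S(N_J) \;\simeq\; \bigoplus_{x \in D_J^{(0)}} \un_S\bigl(c_J(x)\bigr)\bigl[2c_J(x)\bigr],
\]
one summand per generic point of $D_J$.

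The decisive step will be to show that all coface Gysin maps in the totalization, and the boundary map $\un_S \to \Sigma^\infty_S(X/(X-D))$, vanish. Each is represented by a class in $[\un_S,\un_S(c)[2c]]$ for some $c \geq 1$ arising as the Euler class of the normal bundle of a closed immersion between stably $\AA^1$-contractible strata; under the contractibility hypothesis these normal bundles are stably trivial, and Euler classes of stably trivial bundles of positive rank vanish (the total spaces admit nowhere-vanishing sections, homotopic through the zero section). With this vanishing in hand, the totalization splits as a direct sum in which the cosimplicial codegree $n_J - 1$ term is shifted down by $n_J - 1$; combined with the splitting of the initial cofiber sequence this produces
\[
\htp_S(X-D) \;\simeq\; \un_S \,\oplus \bigoplus_{\emptyset \ne J \subset I,\,x \in D_J^{(0)}} \un_S\bigl(c_J(x)\bigr)\bigl[2c_J(x)-n_J\bigr],
\]
which is the claimed formula, with $\un_S$ absorbed as the $J = \emptyset$ summand (where $c_\emptyset = n_\emptyset = 0$). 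The second formula is the specialization when every $D_i$ is a divisor, so that $c_J = n_J = |J|$ and regrouping by codimension gives the multiplicities $m(n)$.

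The hard part will be justifying the uniform vanishing of the Gysin differentials and boundary map. The groups $[\un_S,\un_S(c)[2c]]$ can be nonzero even over a field, so one must use the contractibility of each stratum carefully to realize the relevant classes as Euler classes of stably trivial bundles of positive rank, and verify that no higher coherences of the totalization reintroduce hidden nontrivial maps.
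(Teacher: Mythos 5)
Your overall strategy matches the paper's: the paper uses \Cref{cor:snccorollary2} to write $\Pi_S(X-D)$ directly as the limit of the augmented semi-cosimplicial diagram $\Pi_S(X)\to\bigoplus_i\Pi_S(D_i,\twist{N_i})\rightrightarrows\cdots$, which is equivalent to your cofiber-sequence-plus-totalization formulation; it then trivializes each twisted stratum via $K_0(D_J)\cong K_0(k)=\ZZ$ (so $N_J|_{D_J(x)}$ is stably trivial and $\Pi_S(D_J,N_J)\simeq\bigoplus_x\un_S(c_J(x))[2c_J(x)]$) and kills all the differentials. Your shift bookkeeping is also correct.

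The gap is in your vanishing step, and it is a real one. First, the principle you invoke --- that Euler classes of stably trivial bundles of positive rank vanish --- is false: stable triviality does not produce a nowhere-vanishing section, and the paper itself records a counterexample (the rank-two stably trivial bundle $\xi$ on the affine quadric fivefold $Q$ whose Euler class in $\CHt^2(Q)$ equals $\eta\neq 0$). Second, the coface maps $(\nu_K^J)^!$ between distinct strata are Gysin maps, not Euler classes, so even a corrected Euler-class criterion would not apply to them. Third, your closing concern is inverted: the groups $[\un_k,\un_k(c)[2c]]$ \emph{do} vanish for $c>0$ over a field, by Morel's $\AA^1$-connectivity theorem; the issue is only that they may be nonzero over a general base $S$. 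This is precisely the mechanism the paper uses: after trivializing the Thom spectra, every differential (and the boundary map) is a class in $\pi^{2r,r}(S)$ with $r>0$, because $D_K$ is nowhere dense in $D_J$ so the Tate twists strictly increase along each coface map; the hypothesis that $S$ is stably $\AA^1$-contractible over $k$ then gives $\pi^{2r,r}(S)\simeq\pi^{2r,r}(k)=0$. Replacing your Euler-class argument by this connectivity argument repairs the proof.
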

\begin{proof}
According to Corollary \ref{cor:snccorollary}, one obtains that $\Pi_S(X-D)$ is the homotopy limit of the (finite) tower
\begin{equation}
\label{equation:finitetower}
\Pi_S(X) \rightarrow  \oplus_{i \in I} \Pi_S(D_i,N_i)
\rightarrow \cdots \rightarrow \oplus_{J \subset I, \sharp J=n} \Pi_S(D_J,N_J) \rightarrow \cdots
\end{equation}
Suppose $x$ is a generic point on $D_J\neq \varnothing$, for $J \subset I$, 
and write $D_J(x)$ for the associated connected component.
By assumption, $D_J(x)$ is smooth and stably $\AA^1$-contractible over $k$.
Thus there is an isomorphism
$$
\ZZ=
K_0(k)=
\Hom_{\SH(k)}(\un_k,\KGL) 
\xrightarrow 
\cong 
\Hom_{\SH(k)}(\Sigma^\infty D_{J+},\KGL) 
=
K_0(D_J)
$$
In particular, 
the rank $c_J(x)$ vector bundle $N_J|_{D_J(x)}$ is stably trivial. 
Since $D_J(x)/S$ is stably $\AA^1$-contractible,
it follows that
$$
\Pi_S(D_J,N_J) 
\simeq 
\oplus_x \Pi_S(D_J(x),N_J|_{D_J(x)}) 
\simeq 
\oplus_x \un_S(c_J(x))[2c_J(x)]
$$
To deduce the first assertion it suffices to show that the morphisms in \eqref{equation:finitetower} are zero.
Recall that these maps are sums of Gysin morphisms $\nu_K^{J!}$ for $J,K \subset I$, $K=J \cup\{k\}$,
$\nu_K^J:D_K \rightarrow D_J$. 
We are reduced to consider maps of the form
\begin{equation}
\label{equation:finitemap}
\un_S(c_J(x))[2c_J(x)] \rightarrow \un_S(c_K(y))[2c_J(y)]
\end{equation}
Here, 
$x$ (resp. $y$) is a generic point of $D_J$ (resp. $D_K$). 
Since $D_J$ is nowhere dense in $D_K$,
all such maps belong to some stable cohomotopy group $\pi^{2r,r}(S)$ for $r>0$.
The assumption that $S$ is stably $\AA^1$-contractible over $k$ implies $\pi^{2r,r}(S) \simeq \pi^{2r,r}(k)$.
Morel's $\AA^1$-connectivity theorem shows the latter group is trivial. 
It follows that the map \eqref{equation:finitemap} is zero.

For the second assertion, 
it suffices to note that if $D$ is a normal crossing divisor,
then $c_J=n_J$ for any $J \subset I$.
\end{proof}


\begin{ex} 
Let $S/k$ be a stably $\AA^1$-contractible smooth scheme.
\Cref{prop:complementdivisor} applies to the basic example when $X \simeq \AA^d_S$ and $D$ is a hyperplane arrangement in $X$.
\end{ex}

Owing to \Cref{cor:Atiyah} and \Cref{prop:complementdivisor}, 
we deduce the following computation of stable motivic homotopy types at infinity.

\begin{prop}
\label{prop:complementdivisor2}
Under the assumptions in \Cref{prop:complementdivisor} the object $\Pi_S(X-D)$ is rigid (see \Cref{sec:notations})
and there are isomorphisms
$$
\Pi_S^c(X-D) 
\simeq 
\Pi_S(X-D)^\vee(d)[2d] 
\simeq 
\bigoplus_{K \mid D_K \neq \varnothing} \un_S(d-c_K)[2(d-c_K)+n_K]
$$
Moreover, there exists a canonical isomorphism
$$
\Pi^\infty_S(X-D)\simeq 
\bigoplus_{J \mid D_J \neq \varnothing} \un_S(c_J)[2c_J-n_J]
\oplus \bigoplus_{K \mid D_K \neq \varnothing} \un_S(d-c_K)[2(d-c_K)+n_K-1]
$$
Finally, 
if $D$ is a normal crossing divisor in $X$ relative to $S$,
then we have 
$$
\Pi^\infty_S(X-D) \simeq 
\bigoplus_{n=0}^d m(n).\un_S(n)[n] \oplus \bigoplus_{n=0}^d m(n).\un_S(d-n)[2d-n-1]
$$
\end{prop}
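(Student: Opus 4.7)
The plan is to deduce the three claims in turn from \Cref{prop:complementdivisor}, \Cref{cor:Atiyah}, and an analysis of the natural transformation $\alpha_{X-D}\colon \Pi_S(X-D) \to \Pi_S^c(X-D)$.

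First I would observe that each summand in the decomposition of $\Pi_S(X-D)$ provided by \Cref{prop:complementdivisor} is $\otimes$-invertible in $\SH(S)$ and hence rigid; since a finite direct sum of rigid objects in a closed symmetric monoidal stable $\infty$-category is again rigid, this gives the first claim. Next I would apply \Cref{cor:Atiyah} with $v = 0$ to the smooth structure morphism $h\colon X-D \to S$, of virtual tangent bundle $\tau_h = T_{X/S}|_{X-D}$, to obtain a canonical equivalence $\Pi_S(X-D)^\vee \simeq \Pi_S^c(X-D, -\tau_h)$. To pass from the twisted Borel--Moore homotopy to the untwisted one, the crucial input is that the composition $X \to S \to \Spec(k)$ is stably $\AA^1$-contractible, so $K_0(X) \simeq K_0(k) = \ZZ$; since $T_{X/S}$ has rank $d$, its class is $d\cdot[\mathcal O_X]$, which forces $T_{X/S}$ to be stably trivial as a vector bundle. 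Passing to Thom spectra therefore yields $\Th(T_{X/S}) \simeq \un_X(d)[2d]$ in $\SH(X)$; restriction to $X-D$ combined with the purity identification $h^!(\un_S) \simeq \Th(\tau_h)$ gives $\Pi_S^c(X-D) \simeq \Pi_S(X-D)^\vee(d)[2d]$. Dualizing and Tate-twisting the decomposition of $\Pi_S(X-D)$ then produces the explicit form of $\Pi_S^c(X-D)$.

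For the formula for $\Pi^\infty_S(X-D)$, I would examine the defining fiber sequence
\[
\Pi^\infty_S(X-D) \to \Pi_S(X-D) \xrightarrow{\alpha_{X-D}} \Pi_S^c(X-D).
\]
The core claim is that $\alpha_{X-D}$ vanishes: once this is established, the fiber sequence splits canonically as $\Pi^\infty_S(X-D) \simeq \Pi_S(X-D) \oplus \Pi_S^c(X-D)[-1]$, which is exactly the expression asserted in the proposition. With the direct sum decompositions in place, every matrix entry of $\alpha_{X-D}$ lies in
\[
[\un_S(c_J(x))[2c_J(x)-n_J],\, \un_S(d-c_K(y))[2(d-c_K(y))+n_K]] \simeq [\un_S, \un_S(r)[2r+s]]
\]
with $r = d - c_J(x) - c_K(y)$ and $s = n_J + n_K$, and the stable $\AA^1$-contractibility of $S$ further reduces this to $[\un_k, \un_k(r)[2r+s]]$. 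A substantial portion of these entries are forced to vanish by Morel's $\AA^1$-connectivity theorem and the vanishing $\CHt^n(\Spec(k)) = 0$ for $n > 0$ over a field. The specialization to the normal crossing divisor is the direct substitution $c_J = n_J$ in all formulas.

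The main obstacle is the complete verification that $\alpha_{X-D}$ is zero on every pair of summands: outside the bidegrees in which the Morel and Chow--Witt vanishings force the Hom group itself to be trivial, one must exploit the specific description of $\alpha_{X-D}$ as the natural transformation induced by $g_! \to g_*$ and track the vanishing of the relevant Euler and fundamental classes that arise from the stable triviality of the normal bundles of the stably $\AA^1$-contractible branches. This detailed bookkeeping is where the combinatorics is most delicate; in the normal crossing case it reduces considerably, since the clean identity $r + s = d$ streamlines the case analysis.
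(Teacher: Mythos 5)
Your proposal is correct and follows essentially the same route as the paper, which itself gives no argument beyond citing \Cref{cor:Atiyah} and \Cref{prop:complementdivisor}: rigidity from the decomposition into $\otimes$-invertible summands, duality with compact support plus stable triviality of $T_{X/S}$ (via $K_0(X)\cong K_0(k)$) to identify $\Pi_S^c(X-D)\simeq\Pi_S(X-D)^\vee(d)[2d]$, and the splitting of the defining fiber sequence from the vanishing of $\alpha_{X-D}$ by Morel's connectivity theorem applied summand by summand (where, as you note, $r+s=d>0$ in the normal crossing divisor case). The residual bookkeeping you flag for the general smooth-crossing case, where $r+s=d-c_J-c_K+n_J+n_K$ need not be positive a priori, is likewise left implicit in the paper.
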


\section{Normal crossing boundaries}
\label{section:NCboundary}

\subsection{Homotopy type of crossing singularities}
\label{sec:crossing_sing}

We start with various notion of singularities extending the classical notion of normal crossing divisor.
\begin{df}
\label{df:normal_crossing}
Let $X$ is an $S$-scheme with irreducible components $X_\bullet=(X_i)_{i \in I}$.
For $J \subset I$, 
let $X_J=\cap_{j \in J} X_j$, where $\cap$ means fiber product over $X$ and $X'_J=(X_J)_{red}$.
We say that $X$ has \emph{smooth} (resp. \emph{regular}, \emph{h-smooth}) \emph{crossing} over $S$ if, 
for any non-empty $J \subset I$, 
$X'_J$ is a smooth (resp. regular, h-smooth, \Cref{df:hsmooth}) $S$-scheme.

Suppose, 
in addition, 
that $X$ is a sub-scheme of a smooth (resp. regular, h-smooth) $S$-scheme $\Omega$.
Then $X$ is called a normal (resp. regular, h-normal) crossing subscheme of $\Omega$ if $X$ is a smooth 
(resp. regular, h-smooth) crossing $S$-scheme, and for all $J \subset I$ there is the equality
$$
\codim_\Omega(X_J)=\sum_{j \in J} \codim_\Omega(X_j)
$$
\end{df}

\begin{rem}~
\begin{enumerate}
\item When $X$ is a divisor of $\Omega$, 
i.e., 
every irreducible component of $X$ has codimension $1$ in $\Omega$,
we recover the classical notion of a normal crossing divisor relative to the base $S$.
\item Working with the reduced scheme $X'_J$ instead of $X_J$ allows to consider multiplicities on the 
intersections of the irreducible components of $X$.
That is, 
we don't restrict to simple normal crossing divisors.
\end{enumerate}
\end{rem}


\begin{num}\label{num:ordeded_Cech}
Let $X$ be a noetherian scheme and consider a finite closed cover\footnote{In applications, 
it is the set of irreducible components,
but the flexibility with closed covers simplifies the proofs.} of $X$, 
i.e., 
a surjective map 
$$
p:\sqcup_{i \in I} X_i \rightarrow X
$$
obtained from closed immersions $X_i \rightarrow X$.
For the sake of intuitive notation,
let $\cap=\times_X$ be shorthand for the fiber product of closed $X$-schemes.
The \v Cech simplicial $X$-scheme $\cech_*(X_\bullet/X)$ associated with $p$ takes the form
\begin{equation}
\label{equation:cechsimplicial}
\cech_n(X_\bullet/X)=\bigsqcup_{(j_0,\hdots,j_n) \in I^{n+1}} X_{j_0} \cap \hdots \cap X_{j_n}
\end{equation}
The degeneracy maps $\delta_n^i:\cech_n(X_*) \rightarrow \cech_{n-1}(X_*)$ are given by the appropriate closed immersions.
One can simplify $\cech_*(X_\bullet/X)$ by setting $X_J=\cap_{j \in J} X_j$ for each subset $J \subset I$.
To proceed we consider semi-simplicial schemes.\footnote{Let $\Dinj$ be the category of finite ordered sets with morphisms 
only the injective maps.
A semi-simplicial object in a category $\mathscr C$ is a contravariant functor from $\Dinj$ to $\mathscr C$. 
There is a forgetful functor from simplicial objects to semi-simplicial objects that forgets the degeneracy maps.}
Let us choose a total order on the finite set $I$, 
and define the ordered \v Cech semi-simplicial $X$-scheme $\cecho_*(X_\bullet)$ associated to $X_\bullet/X$ by setting
\begin{equation}
\label{equation:cechsemisimplicial}
\cecho_n(X_\bullet/X)
:=
\bigsqcup_{J \subset I, \sharp J=n+1} X_J
\end{equation}
The degeneracy maps in \eqref{equation:cechsemisimplicial} admit the following description.
To any subset $J \subset I$ of cardinality $n+1$,
we associate the uniquely defined $n$-tuple $(j_0,\hdots,j_n) \in I^{n+1}$ such that $J=\{j_0,\hdots,j_n\}$ and $j_0<\hdots<j_n$.
This gives a canonical embedding $\cecho_*(X_\bullet/X) \subset \cech_*(X_\bullet/X)$ of $\NN$-graded $X$-schemes.
In degree $n$, 
the factor $X_J$ maps by the identity to $X_{j_0} \times_X \hdots \times_X X_{j_n}$.
The degeneracy maps $\delta_n^i$ in the simplicial structure on $\cech_*(X_\bullet/X)$ preserve $\cecho_*(X_\bullet/X)$.
This yields the desired semi-simplicial structure on $\cecho_*(X_\bullet/X)$.
Moreover, 
the map $p$ induces an augmentation $\cecho_*(X_\bullet/X)\to X$.
\end{num}

\begin{rem}
The ordered \v Cech semi-simplicial scheme is much smaller than $\cech_*(X_\bullet/X)$.
It is bounded by the cardinality $c$ of the index set $I$ in the sense that $\cecho_n(X_\bullet/X)=\varnothing$ for all $n>c$ 
(a sum indexed by the empty set is the empty scheme).
\end{rem}

\begin{num}
Owing to \cite[Appendix A]{BRTV} the stable homotopy category $\SH(S)$ has an underlying stable $\infty$-category $\iSH(S)$, 
and the six functor formalism is compatible with the $\infty$-structure. 
We shall use a particular case of the said functoriality. 
Let $\Sch_S$ (resp. $\Sche_S$) be the category of separable schemes (resp. schemes separated of finite type) over $S$. 
To any motivic spectrum $\E$ over $S$,
we associate the contravariant $\infty$-functor
$$
\Sch_S^{op} \xrightarrow{\Pi'_S(-,\E)} \iSH(S), (f:X\rightarrow S) \mapsto f_*f^*(\E)
$$
This is the $\infty$-categorical version of the notion of relative cohomology with coefficients in $\E$.
Dually, 
we obtain the covariant $\infty$-functor
$$
\Sche_S \xrightarrow{\Pi_{S}(-,\E)} \iSH(S), (f:X\rightarrow S) \mapsto f_!f^!(\E)
$$
\end{num}

\begin{num}
Let us return to the setup in \Cref{num:ordeded_Cech}, 
with $X=\sqcup X_{i \in I}$. 
Suppose $p\colon X \rightarrow S$ is a morphism of finite type.
For any $J \subset I$,
we let $p_J\colon X_J \rightarrow S$ denote the obvious composition.
To the ordered \v Cech complex $\cecho_n(X_\bullet/X)$ and $\E$ we associate the functors
\begin{align*}
(\Dinj)^{op} &\rightarrow \Sche_S\xrightarrow{\Pi_{S}(-,\E)} \iSH(S) \\
\Dinj &\rightarrow \Sch_S^{op} \xrightarrow{\Pi'_S(-,\E)} \iSH(S)
\end{align*}
By using the augmentation map to $X$, 
we obtain canonical maps involving the limit and colimit of the preceding functors
\begin{align*}
\pi_{X_\bullet/X,\E}:&\colim_{n \in (\Dinj)^{op}} \left(\oplus_{J \subset I, \sharp J=n+1} \Pi_{S}(X_J,\E)\right) 
\rightarrow \Pi_{S}(X,\E) \\
\pi'_{X_\bullet/X,\E}:&\Pi'_S(X,\E) \rightarrow 
\underset{n \in \Dinj}\lim\left(\oplus_{J \subset I, \sharp J=n+1} \Pi'_S(X_J,\E)\right)
\end{align*}

We show the colimit (resp. limit) can be viewed as the ``standard'' resolution of homology (resp. cohomology) of 
$X/S$ with coefficients in $\E$.
\end{num}
\begin{thm}
\label{thm:compute_closed_cover}
Under the above assumptions, 
$\pi_{X_\bullet/X,\E}$ and $\pi'_{X_\bullet/X,\E}$ are isomorphisms in $\SH(S)$.
\end{thm}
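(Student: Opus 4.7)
My plan is to prove both isomorphisms by induction on $n = \sharp I$, with the key input being a Mayer-Vietoris (co)fiber sequence for the case $n = 2$ that encodes cdh-descent for $\SH$. I describe the argument for $\pi_{X_\bullet/X,\E}$; the one for $\pi'_{X_\bullet/X,\E}$ is formally dual.

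For the base case $n = 2$, write $X = X_1 \cup X_2$ and set $X_{12} = X_1 \cap X_2$. The square
$$
\xymatrix{X_{12} \ar[r] \ar[d] & X_2 \ar[d] \\ X_1 \ar[r] & X}
$$
is an abstract blow-up square, since $X_2 \hookrightarrow X$ is a (proper) closed immersion and the induced map $X_2 \smallsetminus X_{12} \to X \smallsetminus X_1$ is an isomorphism. The plan is to derive the Mayer-Vietoris cofiber sequence
$$
\Pi_S(X_{12}, \E) \to \Pi_S(X_1, \E) \oplus \Pi_S(X_2, \E) \to \Pi_S(X, \E)
$$
by combining the localization triangle in $\SH(X)$ for the pair $(X_1, U := X \smallsetminus X_1)$ with the one in $\SH(X_2)$ for $(X_{12}, X_2 \smallsetminus X_{12} \cong U)$. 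A direct computation, using that closed immersions are proper and that $l^* = l^!$ for the open immersion $l \colon U \to X_2$, shows that both cofibers of $\Pi_S(X_1,\E) \to \Pi_S(X,\E)$ and $\Pi_S(X_{12},\E) \to \Pi_S(X_2,\E)$ coincide with $f_! j_* (fj)^! \E$, where $j\colon U \to X$ is the open immersion, yielding Mayer-Vietoris.

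For the inductive step ($n \geq 3$), set $X' = X_1 \cup \cdots \cup X_{n-1}$ and $W = X' \cap X_n$, the latter equipped with its induced closed cover $\{X_i \cap X_n\}_{i<n}$. At each semi-simplicial level the ordered \v Cech complex $\cecho_\bullet(X_\bullet/X)$ decomposes according to whether $n \in J$: the $n \notin J$ part forms the sub-semi-simplicial object $\cecho_\bullet(X_\bullet'/X')$, while the $n \in J$ part recovers the ordered \v Cech complex of the induced cover of $W$, augmented by $X_n$ in degree zero. Applying $\Pi_S(-, \E)$ and passing to colimits in $\iSH(S)$ yields a cofiber sequence whose first term is $\Pi_S(X', \E)$ by induction, and whose third term is $\mathrm{cofib}(\Pi_S(W, \E) \to \Pi_S(X_n, \E))$. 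Mayer-Vietoris applied to $X = X' \cup X_n$ identifies this cofiber with $\mathrm{cofib}(\Pi_S(X', \E) \to \Pi_S(X, \E))$, forcing the middle term to equal $\Pi_S(X, \E)$, as required.

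The main technical obstacle is ensuring that the inductive decomposition is carried out coherently in the stable $\infty$-category $\iSH(S)$, with all face maps and higher compatibilities tracked correctly; this is possible within the $\infty$-categorical enhancement of $\SH$ (\cite{BRTV}, \cite{Khan}) but demands some bookkeeping. An alternative that sidesteps this combinatorics is to invoke a general $\infty$-categorical descent principle: the presheaves $\Pi_S(-, \E)$ and $\Pi'_S(-, \E)$ satisfy cdh-descent from the six-functor formalism of $\SH$, and since $\sqcup_i X_i \to X$ is a cdh-cover whose ordered \v Cech complex is cofinal in the full simplicial \v Cech nerve, the conclusion follows. Either way, the crux is establishing cdh-descent in the form needed for the homology functor $f_! f^!(\E)$, which is less classical than cdh-descent for the cohomology functor $f_* f^*(\E)$.
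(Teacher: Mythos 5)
Your argument is correct, but it follows a genuinely different route from the paper's. The paper first reduces to reduced schemes and to the case $X=S$, then uses the conservativity of the family of functors $i_x^!$ (resp.\ $i_x^*$) for $x \in X$, cited from \cite{DFJK}, together with proper base change to reduce to the case where $X$ is a point; there the augmented ordered \v{C}ech object admits an explicit simplicial contraction. You instead induct on $\sharp I$, taking as base case a closed Mayer--Vietoris cocartesian square extracted from the two localization triangles for $(X_1, X\smallsetminus X_1)$ in $\SH(X)$ and $(X_{12}, X_2\smallsetminus X_{12})$ in $\SH(X_2)$ --- your computation that both cofibers equal $f_!j_*(fj)^!\E$ is exactly right, using $i_{2!}=i_{2*}$ and $l^*=l^!$ --- and for the inductive step splitting the ordered \v{C}ech object into the sub-object indexed by $n\notin J$ and the quotient indexed by $n\in J$, the latter realizing to $\mathrm{cofib}(\Pi_S(W,\E)\to\Pi_S(X_n,\E))$. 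Your route is more self-contained (it avoids the nontrivial conservativity input) and makes the cdh/cocartesian structure explicit; the paper's stalkwise reduction handles all of $I$ in one stroke and avoids the combinatorial bookkeeping you flag, at the price of relying on proper base change and the conservative family of exceptional stalk functors. Two caveats on your write-up. First, $X_2\smallsetminus X_{12}\to X\smallsetminus X_1$ is in general only a surjective closed (hence nil-) immersion, so you should pass to reduced structures at the outset, as the paper does; this is harmless by localization. Second, your fallback argument is the weak point: the ordered \v{C}ech semi-simplicial object is a levelwise direct summand of the full \v{C}ech nerve, not its restriction along a cofinal functor of indexing categories, so ``cofinality'' does not by itself license replacing the full nerve (for which cdh-codescent is quotable from \cite{CD3}) by the ordered one --- that comparison needs an argument, which your primary inductive proof in fact supplies.
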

\begin{proof}
First we reduce to the case when $X$, and similarly each $X_i$, is reduced.
We consider the nil-immersion $\nu:X_{red} \rightarrow X$. 
The localization property of $\SH$ implies the adjunction $(\nu^*,\nu_*)$ is an equivalence.
Moreover, 
$\nu^*$ is right adjoint to $\nu_*=\nu_!$, 
so that $(\nu_!,\nu^!)$ is an equivalence of categories. 
This implies $\nu_*:\Pi_{S}(X_{red},\E) \rightarrow \Pi_{S}(X,\E)$ is an isomorphism.
\vspace{0.05in}

Let us consider $\pi_{X/S,\E}$.
For any $J \subset I$,
there is an isomorphism $p_{J!}p_J^! \simeq p_!i_{J!}i_J^!p^!$,
where $i_J:X_J \rightarrow X$ is the canonical closed immersion.
By replacing $\E$ with $p^!(\E)$, 
we have reduced to the case $X=S$. 
There is, see for example \cite[B.20]{DFJK}, a conservative family of functors
$$
i_x^!
\colon 
\iSH(X) \rightarrow \iSH\big(\Spec(\kappa(x))\big), x \in X
$$
Therefore, 
it suffices to show $i_x^!\big(\pi_{X_\bullet/X,\E}\big)$ is an isomorphism for all $x \in X$.
Given $J \subset I$, 
we consider the following cartesian square
$$
\xymatrix@=24pt{
X'_J\ar_{i'_J}[d]\ar^{i'_x}[r] & X_J\ar^{i_J}[d] \\
\{x\}\ar^{i_x}[r] & X
}
$$
Proper base change for $i_J$ implies there is an isomorphism 
$$
i_x^!i_{J!}i_J^! \simeq i'_{J!}i_x^{\prime!}I_J^!
$$
Moreover, 
we have 
$$
i'_{J!}i_x^{\prime!}I_J^! \simeq i'_{J!}i_J^{\prime !}i_x^!
$$
For the map in question, 
we deduce
$$
i_x^!\big(\pi_{X_\bullet/X,\E}\big)
=
\pi_{X_\bullet \times_X \{x\}/\{x\},i_x^!\E}
$$
We may therefore assume $X=\{x\}$ is a point.  
In this case, 
since the $X_i$'s are closed reduced subschemes of the reduced scheme $\{x\}$, 
the closed cover $p:\sqcup_{i \in I} X_i \rightarrow X=\{x\}$ is given by a sum of identity maps.  
To proceed, 
one can, 
for example, 
construct an explicit homotopy contraction of the semi-simplicial augmented pointed $X$-scheme
$$
\cecho_*(X_\bullet/\{x\})_+ \rightarrow \{x\}_+
$$

The proof for the map $\pi'_{X_\bullet/X,\E}$ is entirely analogous.
In this case, we employ, 
see for example \cite[Proposition 4.3.17]{CD3}, 
the conservative family of functors
$$
i_x^*:\iSH(X) \rightarrow \iSH\big(\Spec(\kappa(x))\big), x \in X
$$
\end{proof}

\begin{rem}
The proof method of \Cref{thm:compute_closed_cover} can be used to show the isomorphisms 
\begin{equation}
\label{equation:firstcomputation}
\colim_{n \in (\Dinj)^{op}} \left(\oplus_{i_0 \leq \hdots \leq i_n} \Pi_{S}(X_{i_0,\hdots,i_n},\E)\right)
\simeq 
\Pi_{S}(X,\E)
\end{equation}
\begin{equation}
\label{equation:secondcomputation}
\colim_{n \in (\Delta)^{op}} \left(\oplus_{(i_0,\hdots,i_n) \in I^n} \Pi_{S}(X_{i_0,\hdots,i_n},\E)\right)
\simeq 
\Pi_{S}(X,\E)
\end{equation}
The isomorphism \eqref{equation:secondcomputation} is a manifestation of $\cdh$-descent for the 
\emph{triangulated motivic category} $\SH$, 
see \cite[Proposition 3.3.10]{CD3}.
In \eqref{equation:secondcomputation} we may replace $\Delta$ by $\Dinj$ since the inclusion functor 
$\Dinj \subset \Delta$ is final.
We leave the analogous dual statements for $\Pi'_S(X,\E)$ to the reader.
\end{rem}

\begin{rem}
\label{rem:A1_hlg&motives}
\Cref{thm:compute_closed_cover} holds more generally for any triangulated motivic category $\mathscr T$ 
in the sense of \cite[2.4.45]{CD3}
which admits an $\infty$-categorical enhancement as in \cite{Khan}.
As shown in \emph{loc. cit.}, 
such an enhancement exists if $\mathscr T$ is defined via a model $\Sm$-premotivic structure, 
as in \cite[\textsection 1.3.d]{CD3}.
Main examples include the motivic stable homotopy category with coefficients (see \cite[\textsection 4.5]{Ayoub2}),
the $\AA^1$-derived category (see \cite[5.3.31]{CD3}) rational (or Beilinson) motives $\DM_\QQ$ (see \cite[Part IV]{CD3}),
étale motives $\DM_\et$ (see \cite{CD4}), 
$\cdh$-motives (see \cite{CD5}), 
and \'etale torsion and $\ell$-adic sheaves 
(see \cite{LiuZheng} respectively \cite[\textsection 7.2]{CD4}).
\end{rem}

\begin{ex}
\label{ex:sncexample1}
Suppose $X$ has smooth reduced crossings over $S$, and let $X_\bullet$ be the closed cover by the irreducible components of $X$.
Set $c=\sharp I$ following \Cref{df:normal_crossing}.
In this case, 
the isomorphism $\pi_{X_\bullet/X,\un_S}$ from \Cref{thm:compute_closed_cover} amounts to the 
exact homotopy sequence
\begin{equation}
\label{equation:snches}
\Sigma^\infty X'_{I+} 
\rightarrow 
\hdots
\bigoplus_{J \subset I, \sharp J=2} \Sigma^\infty X'_{J+}
\rightrightarrows \bigoplus_{i \in I} \Sigma^\infty X'_{i+}
\rightarrow \htp_S(X)
\end{equation}
of length at most $c+1$ in $\SH(S)$ (note that $X'_I$ can be empty).
For $J \subset K$ there is a naturally induced inclusion $\nu_K^J:X'_K \rightarrow X'_J$.
The face map 
$$
\delta_n^k
\colon
\bigoplus_{K \subset I, \sharp K=n+1} \Sigma^\infty X'_{K+} \rightarrow \bigoplus_{J \subset I, \sharp J=n} \Sigma^\infty X'_{J+}
$$
in \eqref{equation:snches} is defined by the formula
$$
\delta_n^k
=
\sum_{J=\{i_0<\hdots<\widehat{i_k}<\hdots<i_n\} \subset K=\{i_0<\hdots<i_n\}  } (\nu_{K}^J)_*
$$
We obtain a similar formula when considering a twist by a virtual bundle $v$ over $X$
\begin{equation}
\label{equation:snches2}
\Sigma^\infty \Th_S(v_I)
\rightarrow 
\hdots
\bigoplus_{J \subset I, \sharp J=2} \Sigma^\infty \Th_S(v_J)
\rightrightarrows \bigoplus_{i \in I} \Sigma^\infty \Th_S(v_i)
\rightarrow \htp_S(X,v)
\end{equation}
where $v_J$ is the pullback of $v$ to $X'_J$ for $J \subset I$.
\end{ex}

\begin{rem}
\label{rem:compare}
Continuing with \Cref{ex:sncexample1}, 
the $S$-scheme $X$ defines a sheaf of sets $\underline X$ on $\Sm_{S}$. 
We claim the preceding computation yields an isomorphism $\htp_S(X)\simeq \Sigma^{\infty}\underline X_{+}$.
A proof uses the $\PP^1$-stable $\AA^1$-homotopy category $\underline \SH_\cdh(S)$ over $S$ for the big cdh 
site\footnote{The site of finite type $S$-scheme endowed with the $\cdh$-topology.}
in the style of \cite[\textsection 6.1]{CD3}.\footnote{In the terminology of \emph{loc. cit.},
one gets an \emph{enlargement} of motivic triangulated categories: see Definition 1.4.13.}
Theorem \ref{thm:compute_closed_cover} holds in $\underline \SH_\cdh(S)$ due to $\cdh$-descent, 
and then the comparison reduces to the smooth case, 
which holds by the general properties of an enlargement.
\end{rem}

\begin{ex}
\label{ex:sncexample2}
With the same assumptions as in \Cref{ex:sncexample1} and following Remark \ref{rem:A1_hlg&motives}, 
the \'etale or rational motive $M_S(X)=f_!f^!(\un_S)$ of $X/S$ is isomorphic to the complex of sheaves
$$ 
M_S(X'_{I}) 
\xrightarrow{d_{c-2}} 
\hdots 
\xrightarrow{\ d_1\ }
\bigoplus_{J \subset I, \sharp J=2} M_S(X'_{J})
\xrightarrow{\ d_0\ } \bigoplus_{i \in I} M_S(X'_{i})
$$
The differentials are given by the alternating sums $d_i=\sum_{k=0}^n (-1)^k\delta_n^k$.
Equivalently, 
$M_S(X)$ is the motivic complex associated with the \'etale sheaf (Nisnevich sheaf for rational motives) 
represented by $X$ on $\Sm_S$, 
according to Remark \ref{rem:A1_hlg&motives}.

This formula is a motivic relative version of the classical computation of the homology of a normal crossing scheme.
It actually gives back the known formulas by realization of motives (Betti, \'etale, etc...). 
A dual formula, 
see also \Cref{thm:strong_duality},
holds for computing the relative Chow motive $h_S(\underline X)=p_*p^*(\un_S)$, $p:X \rightarrow S$, 
by considering the isomorphism $\pi'(X_\bullet/X,\un_S)$ of Theorem \ref{thm:compute_closed_cover}: 
$h_S(\underline X)$ is quasi-isomorphic to the image of the preceding complex under the (derived) internal Hom functor 
$\derR \uHom(-,\un_S)$.
\end{ex}

As a consequence of \Cref{thm:compute_closed_cover}, 
we obtain the following generalization of a computation due to Rappoport and Zink (see \Cref{rem:RZS} for details).

\begin{cor}
\label{cor:snccorollary}
Let $i:Z \rightarrow X$ be a closed immersion, $U=X-Z$ and $j:U \rightarrow X$ the complementary open immersion.
For the decomposition into irreducible components $Z=\cup_{i \in I} Z_i$ we set $c=\sharp I$, 
$Z_J=\cap_{j \in J} Z_j$, 
where $\cap$ is the fiber product over $X$, 
and $Z'_J=Z_{J,red}$. 
Assume one of the following conditions holds:
\begin{enumerate}
\item $\mathscr T=\SH_\QQ$, $\DM_\QQ$, $\DM_\et$, $D^b_{c}(-_\et,\ZZ_\ell)$, $\mathrm{DH}(X)$,
$X$ is regular and $D$ is a regular crossing subscheme of $X$
\item $\mathscr T$ is any of the triangulated motivic categories in \Cref{sec:notations},
$X$ is h-smooth over a scheme $S$ (see \Cref{df:hsmooth}) and $Z$ is an h-smooth crossing subscheme of $X/S$
\end{enumerate}

Then the object $i^*j_*(\un_U)$ of $\mathscr T(Z)$ is isomorphic to the colimit of the following augmented 
semi-simplicial object of length at most $c+1$ in the underlying $\infty$-category
$$
\nu_{I*} \Th(-N_I) \rightarrow \hdots \bigoplus_{J \subset I, \sharp J=2} \nu_{J*}(\Th(-N_J)) 
\rightrightarrows
\bigoplus_{i \in I} \nu_{i*}(\Th-(N_i)) \xrightarrow \epsilon \un_Z
$$
Here, 
$N_J$ is the normal bundle of the regular closed immersion $Z'_J \rightarrow X$, and the face maps are defined 
by the Gysin maps
$$
\delta_n^k=
\sum_{J=\{i_0<\hdots<\widehat{i_k}<\hdots<i_n\} \subset K=\{i_0<\hdots<i_n\}  }(\nu_{K}^J)_!
$$
Here, 
$(\nu_K^J)_!\colon \nu_{K*}(\Th(N_K)) \rightarrow \nu_{J*}(\Th(N_J))$ is the Gysin map associated with the 
closed immersion $\nu_J^K$ relative to $X$.\footnote{In fact, 
one can interpret $\nu_{J*}(\Th(N_J))$ as the twisted cohomology $\Pi'_X(Z_J,\twist{N_J})$,
so that $(\nu_J^K)_!$ is the Gysin map associated with the proper morphism of $X$-schemes $\nu_J^K$.}
Similarly, 
the Gysin maps $\nu_{i!}:\nu_{i*}(\Th(N_i)) \rightarrow \un_Z$ for all $\nu_i:Z_i \rightarrow Z$, $i\in I$, 
define the augmentation map
$$
\epsilon=\sum_{i \in I} \nu_{i!}
$$

Dually,
in the underlying $\infty$-category, 
the object $i^!j_!(\un_U)$ in $\mathscr T(U)$ is isomorphic to the limit of the following augmented 
semi-cosimplicial object (of length at most $c+1$)
$$
\un_Z 
\rightarrow 
\bigoplus_{i \in I}\nu_{i!}(\Th(N_i))
\rightrightarrows \bigoplus_{J \subset I, \sharp J=2} \nu_{J!}(\Th(N_J)) \hdots
\rightarrow 
\nu_{I!} \Th(N_I)
$$
\end{cor}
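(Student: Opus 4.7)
Both formulas will be deduced from two localization fibre sequences in $\mathscr T(Z)$ together with a Čech-type colimit resolution of $i^!\un_X$ indexed by the irreducible components of $Z$. Applying $i^*$ (resp.\ $i^!$) to the sequences $i_*i^!\to\mathrm{id}\to j_*j^*$ and $j_!j^*\to\mathrm{id}\to i_*i^*$ at $\un_X$, and using $i^*i_*\simeq\mathrm{id}\simeq i^!i_*$, yields
$$
i^!\un_X \to \un_Z \to i^*j_*\un_U, \qquad i^!j_!\un_U \to i^!\un_X \to \un_Z.
$$
Thus $i^*j_*\un_U$ and $i^!j_!\un_U$ appear as the cofibre and the fibre, respectively, of a single map $i^!\un_X\to\un_Z$, and the task reduces to realizing $i^!\un_X$ as the claimed (co)limit.

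\textbf{Resolving $i^!\un_X$ via Čech.}
Apply the covariant form $\Pi_Z(-,\mathcal E)=f_!f^!\mathcal E$ of \Cref{thm:compute_closed_cover} to the closed cover $\sqcup_{i\in I}Z_i\to Z$ (viewed over $Z$), with coefficients $\mathcal E:=i^!\un_X$. Writing $q_J\colon Z'_J\hookrightarrow Z$ for the inclusion, this gives the canonical equivalence
$$
\colim_{n\in(\Dinj)^{op}}\bigoplus_{\sharp J=n+1} q_{J!}\,q_J^!(i^!\un_X)\xrightarrow{\ \sim\ } i^!\un_X.
$$
Each composite $\nu_J:=i\circ q_J\colon Z'_J\to X$ is a regular closed immersion, and either hypothesis (1) or (2) forces $\nu_J$ to be h-smooth in $\mathscr T$ with virtual tangent bundle $-N_J$. \Cref{thm:generalizedhomotopypurity} therefore identifies $q_J^!i^!\un_X=\nu_J^!\un_X\simeq\Th(-N_J)$, and $q_{J!}=q_{J*}$ since $q_J$ is a closed immersion, yielding
$$
\colim_{J\neq\emptyset}\nu_{J*}\Th(-N_J)\xrightarrow{\ \sim\ } i^!\un_X.
$$
The differentials are alternating sums of Gysin pushforwards $(\nu_K^J)_!$ by the compatibility of purity with smoothable lci Gysin morphisms (cf.\ Example \ref{ex:purity&comparison} and \ref{num:Gysin}). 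Inserted into the first fibre sequence, this produces the stated colimit description of $i^*j_*\un_U$ with augmentation $\epsilon=\sum_{i\in I}\nu_{i!}$.

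\textbf{The dual statement.}
For $i^!j_!\un_U$ the argument runs in parallel, via Verdier duality on $\mathscr T(Z)$. Since each $\nu_J$ is proper and h-smooth, \Cref{lm:pre-duality} supplies $D_Z(\nu_{J*}\Th(-N_J))\simeq \nu_{J!}\Th(N_J)$ stratum by stratum, while $\un_Z$ remains self-dual up to an absorbable twist. Dualizing the colimit description of $i^!\un_X$ converts the $\Th(-N_J)$ entries into $\Th(N_J)$, reverses the Gysin differentials, and turns the cofibre into the fibre of a co-augmentation $\un_Z\to \lim_J \nu_{J!}\Th(N_J)$. Combined with the second fibre sequence, this yields the desired limit presentation of $i^!j_!\un_U$.

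\textbf{Principal difficulty.}
The chief technical point is that the pointwise purity identifications and Gysin boundaries must be lifted to a coherent semi-simplicial (resp.\ semi-cosimplicial) diagram in the underlying $\infty$-category $\mathscr T(Z)$, so that the $\colim$ (resp.\ $\lim$) is well-defined rather than merely pointwise-correct up to homotopy. This coherence is guaranteed by the $\infty$-enhancement of the six-functor formalism together with the fundamental-class calculus of \cite{DJK}; it is also precisely what sanctions the Verdier-duality step exchanging $\Th(-N_J)$ with $\Th(N_J)$ in passing to the dual formula.
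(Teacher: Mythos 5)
Your proof of the first assertion is essentially the paper's: apply $i^*$ to the localization sequence $i_*i^!\to\mathrm{id}\to j_*j^*$, resolve $i^!(\un_X)$ by the \v Cech colimit of \Cref{thm:compute_closed_cover} for the cover $\sqcup_{i\in I}Z_i\to Z$ with coefficients $i^!(\un_X)$, and identify $i_J^!(\un_X)\simeq\Th(-N_J)$ by absolute purity in case (1) and by h-smoothness of the composite $i_J\colon Z'_J\to X$ (via \ref{num:hsmooth}) in case (2). The citation of \Cref{thm:generalizedhomotopypurity} for this last identification is cosmetic --- what is used is just \Cref{df:hsmooth} --- and otherwise this half is correct and matches the paper.

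The dual assertion is where you diverge from the paper, and your argument there has a genuine gap. The paper simply re-runs the same localization--\v Cech--purity argument on the dual sequence $j_!j^!\to\mathrm{id}\to i_*i^*$, using the limit-form resolution $\pi'$ of \Cref{thm:compute_closed_cover}; you instead propose to Verdier-dualize the colimit presentation of $i^!(\un_X)$ and feed the result into the fibre sequence $i^!j_!(\un_U)\to i^!(\un_X)\to\un_Z$. This does not parse: dualizing the resolution describes the dual of $i^!(\un_X)$, whereas your fibre sequence involves $i^!(\un_X)$ itself, and the two are not isomorphic (already for a single smooth divisor $i^!(\un_X)\simeq\Th(-N)$ while its dual is $\Th(N)$ up to a dualizing object). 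Moreover the termwise duality you invoke is not available: $\un_Z$ is not self-dual over the singular scheme $Z$, and \Cref{lm:pre-duality} gives $\uHom_Z\big(\nu_{J*}\Th(-N_J),\un_Z\big)\simeq\nu_{J*}\big(\Th(N_J)\otimes\nu_J^!(\un_Z)\big)$, where the factor $\nu_J^!(\un_Z)$ for the closed immersion $\nu_J\colon Z'_J\to Z$ into the non-h-smooth scheme $Z$ is precisely the kind of object the corollary is trying to compute --- it is not an ``absorbable twist.'' Your own fibre sequence combined with the first part only yields $i^!j_!(\un_U)\simeq\mathrm{fib}\big(\colim_J\nu_{J*}\Th(-N_J)\to\un_Z\big)$, and passing from this colimit presentation with negative Thom twists to the stated limit presentation with positive twists is exactly the content of the second assertion; it cannot be obtained by the duality shuffle as written. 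To close the gap, follow the paper and apply the argument of the first part directly to $j_!j^!(\un_X)\to\un_X\to i_*i^*(\un_X)$.
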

\begin{proof}
We note that $Z'_J \rightarrow X$ is a regular closed immersion so that the normal bundle $N_J$ exists.
The distinguished localization triangle associated with the closed immersion $i$ corresponds to an 
exact homotopy sequence in the underlying $\infty$-category
$$
i_!i ^!(\un_S) \rightarrow \un_S \rightarrow j_*j^*(\un_S)=j_*(\un_U)
$$
Applying $i^*$ and \Cref{thm:compute_closed_cover} to the left-hand term, 
we deduce the exact homotopy sequence
$$
\colim_{n \in (\Dinj)^{op}} \big(\nu_{J*}i_J^!(\un_S)\big) \rightarrow \un_Z \rightarrow i^*j_*(\un_U)
$$
To conclude, 
we use the purity isomorphism $i_J^!(\un_S)\simeq \Th(-N_J)$, 
which is absolute purity in case of assumption (1) and the fact $i_J$ is h-smooth in case of assumption (2)
(apply \Cref{num:hsmooth}).
\vspace{0.1in}

The result for $i^!j_!(\un_U)$ follows by applying the same argument to the dual exact homotopy sequence
$$
j_!j ^!(\un_S) \rightarrow \un_S \rightarrow i_*i^*(\un_S)
$$ 
\end{proof}

\begin{rem}
\label{rem:RZS}
In the situation of \Cref{cor:snccorollary} we consider the cases, 
(1) $\mathscr T=\DM_\QQ, \DM_\et$, 
(2) $\mathscr T=\DM$, 
and assume $Z=D$ is a normal crossing divisor on $X$.
Then, 
in the underlying motivic $\infty$-category, 
the motive $i^*j_*(\un_U)$ is the colimit of the complex 
\begin{equation}
\label{equation:RZgeneralization}
\nu_{I*}(\un)\twist{c} \xrightarrow{d_{c-2}} \hdots \bigoplus_{J \subset I, \sharp J=2} \nu_{J*}(\un)\twist{2} \xrightarrow{d_0}
\bigoplus_{i \in I} \nu_{i*}(\un)\twist{1} \xrightarrow \epsilon \un_D
\end{equation}
Here, 
$d_n=\sum_k (-1)^k \delta_n^k$ is an alternate sum of Gysin maps associated with the relevant closed immersions.
The computation for \eqref{equation:RZgeneralization} specializes under $\ell$-adic realization to the Rapoport-Zink 
formula for vanishing cycles \cite[Lemma 2.5]{RapZink}.
A similar remark applies to Steenbrink's limit Hodge structure \cite{Steenb},
except that our computation for motives does not account for the action of the monodromy operator.
\end{rem}

\begin{cor}\label{cor:snccorollary2}
Consider the assumptions of \Cref{cor:snccorollary},
 and assume in addition (in case (1)) that $X$ be an $S$-scheme with projection map $\bar p:X \rightarrow S$.

Then $\Pi_{S}(U)$ is isomorphic to the limit of the augmented semi-simplicial diagram
\begin{equation}
\label{equation:poles}
\Pi_{S}(X) 
\xrightarrow \epsilon \bigoplus_{i \in I} \Pi_{S}(Z_i,\twist{N_i}) 
\rightrightarrows \bigoplus_{J \subset I, \sharp J=2} \Pi_{S}(Z_J,\twist{N_J}) \hdots 
\rightarrow \Pi_{S}(Z_I,\twist{N_I})
\end{equation}
given by the maps 
\begin{align*}
\epsilon&=\sum_{i \in I} \nu_i^! \\
\delta^n_k&=\sum_{K=\{i_0<\hdots<i_n\}, J=\{i_0<\hdots<\not{i_k}<\hdots<i_n\}} (\nu_{K}^J)^!
\end{align*}

Similarly, 
the motive $M_S(U)$ is the limit of the augmented semi-simplical diagram
\begin{equation}
\label{eq:poles}
M_S(X) \xrightarrow \epsilon \bigoplus_{i \in I} M_S(Z_i)\twist{1}
\rightrightarrows \bigoplus_{J \subset I, \sharp J=2} M_S(Z_J)\twist{2} \hdots 
\rightarrow M_S(Z_I)\twist{c}
\end{equation}
with the same formulas as in \eqref{equation:poles} for the augmentation $\epsilon$ and the coface maps $\delta^n_k$.
\end{cor}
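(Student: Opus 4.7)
The plan is to derive this corollary from \Cref{cor:snccorollary} and \Cref{thm:compute_closed_cover} by combining the localization fiber sequence with the relative purity isomorphism available under the h-smoothness hypothesis. First I would apply the localization triangle $j_!j^* \to \mathrm{Id} \to i_*i^*$ in $\iSH(X)$ to $\bar p^!\un_S$ and then apply the exact functor $\bar p_!$. Because $j$ is an open immersion one has $j^* = j^!$, giving $\bar p_!j_!j^*\bar p^!\un_S = p_!p^!\un_S = \htp_S(U)$; because $i$ is a closed immersion one has $i_* = i_!$, giving $\bar p_!i_*i^*\bar p^!\un_S = q_!i^*\bar p^!\un_S$. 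This yields a fiber sequence
\[
\htp_S(U) \longrightarrow \htp_S(X) \longrightarrow q_!i^*\bar p^!\un_S,
\]
reducing the task to identifying $q_!i^*\bar p^!\un_S$ with the totalization of the semi-cosimplicial diagram in \eqref{equation:poles}.

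Next I would apply the limit form of Cech descent for the closed cover $\sqcup_{i\in I} Z_i \to Z$---the statement in $\iSH(Z)$ underlying the $\pi'$ isomorphism of \Cref{thm:compute_closed_cover}---to the object $\mathcal{F} = i^*\bar p^!\un_S$, obtaining
\[
i^*\bar p^!\un_S \simeq \lim_{n \in \Dinj} \bigoplus_{\sharp J=n+1} i_{J*}\nu_J^*\bar p^!\un_S,
\]
where $i_J\colon Z_J\to Z$ and $\nu_J = i\circ i_J\colon Z_J\to X$. Since $q_!$ is exact and the indexing diagram is bounded by $\sharp I$, it commutes with this finite limit; together with $q_!i_{J*} = q_!i_{J!} = q_{J!}$, this gives
\[
q_!i^*\bar p^!\un_S \simeq \lim_{n\in\Dinj}\bigoplus_{\sharp J=n+1} q_{J!}\nu_J^*\bar p^!\un_S.
\]
The composite $\nu_J$ is an h-smooth closed immersion between h-smooth $S$-schemes by \Cref{num:hsmooth}, so the relative purity isomorphism $\mathfrak{p}_{\nu_J}$ identifies $\nu_J^*\bar p^!\un_S$ with $\Th(\twist{N_J})\otimes q_J^!\un_S$; applying $q_{J!}$ then yields $\htp_S(Z_J,\twist{N_J})$.

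Finally I would identify the structure maps. The coface maps of the limit diagram arise from the adjunction units $\nu_J^*\to i_{K,J*}\nu_K^*$ of $(i_{K,J}^*,i_{K,J*})$ for $K\supset J$, while the augmentation $\epsilon\colon \htp_S(X)\to\bigoplus_i\htp_S(Z_i,\twist{N_i})$ comes from the co-augmentation of the Cech diagram. After the purity identifications these coincide respectively with the Gysin maps $(\nu_K^J)^!\colon \htp_S(Z_J,\twist{N_J})\to \htp_S(Z_K,\twist{N_K})$ and with $\sum_i\nu_i^!$, by the associativity of the fundamental classes in \cite[Theorem 3.3.2]{DJK}. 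The motive statement \eqref{eq:poles} follows by running the same argument in $\DM$ and invoking the canonical Thom isomorphism $\Th(\twist{N_J})\simeq \un(\sharp J)[2\sharp J]$ valid in the normal crossing divisor case. The hard part will be this last identification step: translating the canonical adjunction units into the geometrically defined Gysin maps requires unwinding the compatibility between purity natural transformations and cotraces inside the fundamental-class formalism, a coherence check rather than a routine calculation.
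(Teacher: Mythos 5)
Your proposal is correct and follows essentially the same route as the paper: insert $\bar p^!(\un_S)$ into the localization triangle, apply $\bar p_!$, identify the third term via the limit form of \Cref{thm:compute_closed_cover}, and convert $i_J^*\bar p^!(\un_S)$ into $\Th(N_J)\otimes q_J^!(\un_S)$ by (absolute or relative) purity; the identification of the coface maps with Gysin maps is, as in the paper, definitional from the fundamental-class formalism. The only point the paper makes explicit that you elide is that applying the purity isomorphism to the non-unit object $\bar p^!(\un_S)$ is legitimate because $\bar p^!(\un_S)\simeq\Th(T_{X/S})$ is $\otimes$-invertible.
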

\begin{proof}
Similarly to \Cref{cor:snccorollary},
inserting $p^!(\un_S)$ in the localization distinguished triangle for the closed immersion $i$ and applying $\bar p_!$ 
yields the exact homotopy sequence
$$
M_S(U)=\bar p_!j_!j ^!\bar p^!(\un_S) \rightarrow M_S(X)=\bar p_!\bar p^!(\un_S) \rightarrow \bar p_!i_*i^*\bar p^!(\un_S)
$$
\Cref{thm:compute_closed_cover} identifies the right-hand term with 
$$
\lim_{ n \in \Dinj} \sum_{J \subset I, \sharp J=n+1} \bar p_!i_{J!}i_J^*\bar p^!(\un_S)
$$
Under the assumptions (1), (2) we have $\bar p^!(\un_S)\simeq \Th(T_{X/S})$.
Owing to purity, absolute or relative, for the regular closed immersion $i_J$ we deduce the isomorphism
$$
\bar p_!i_{J!}i_J^*\bar p^!(\un_S) \simeq \bar p_!i_{J!}\big(\Th(N_J) \otimes i_J^!\bar p^!(\un_S)\big)=\Pi_{S}(Z_J,\twist{N_J})
$$
The computation for $M_S(U)$ follows because motives are oriented.
\end{proof}

\begin{rem}
Suppose $X/S$ is smooth and proper.
The formula for $M_S(X-D)$, 
the motive of the complement of a normal crossing divisor $D$ of $X/S$, 
is a relative motivic analog of the de Rham complex with logarithmic poles that Deligne used to define mixed Hodge structures. 
The motive of the relative non-compact family $X-D$ over $S$ is expressed as the ``complex" \eqref{eq:poles} whose terms are 
pure of weight $0$ for Bondarko's motivic weight structure. 
In particular, 
it gives a canonical and functorial weight filtration for the motive $M_S(X-D)$ (recall that a pure object of weight $0$ 
shifted $n$ times has weight $n$).
This is a motivic analog of the fact that the weight filtration of the mixed Hodge structure on $X-D$, 
at least when $S=\Spec(\CC)$, 
is obtained by the naive filtration of the de Rham complex with logarithmic poles associated with $(X, D)$.
\vspace{0.1in} 

Dually, 
we can identify the Chow motive $h_S(X)=p_*(\un_S)$ with the colimit of the diagram
\begin{equation}
\label{equation:chowmotive}
h_S(D_I)\twist{-c} \rightarrow \hdots \bigoplus_{J \subset I, \sharp J=2} h_S(D_J)\twist{-2}
\rightrightarrows \bigoplus_{i \in I} h_S(D_i)\twist{-1} 
\xrightarrow 
\epsilon h_S(X)
\end{equation}
When $S=\Spec(\CC)$, 
the de Rham realization of \eqref{equation:chowmotive}, 
see \cite[\textsection 3.1]{CD2}, 
can be canonically identified with the de Rham complex with logarithmic poles associated with
$(X, D)$.\footnote{As explained in \cite[Example 5.4.2(1)]{Deg12}, 
this follows from the identification of the orientation of the motivic spectrum representing algebraic de Rham cohomology.}
\end{rem}

\subsection{Strong duality}

We deduce some applications of the computations in the previous section towards strong duality results.
\begin{thm}\label{thm:strong_duality}
\begin{enumerate}
\item Let $X$ be a proper scheme over $S$ with smooth reduced crossings,
and $v$ be a virtual bundle over $X$.
Then $\htp_S(X,v)$ is rigid with dual $\cohtp_S(X,-v)$.
\item Let $X/S$ be scheme which admits a smooth proper compactification $\bar X$ whose complement $\partial X$ 
is a smooth reduced crossing $S$-scheme.
Then $\htp_S(X,v)$ is rigid with dual $\htp^c_S(X,-v-T_X)$, 
where $T_X$ is the tangent bundle of $X/S$.
\end{enumerate}
\end{thm}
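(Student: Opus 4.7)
For part (1), the plan is to invoke the ordered \v Cech decomposition of \Cref{ex:sncexample1} (a consequence of \Cref{thm:compute_closed_cover}) applied to the closed cover $\sqcup_{i\in I} X_i \to X$ by the irreducible components. This expresses $\htp_S(X,v)$ as a finite semi-simplicial colimit of the strata $\htp_S(X'_J, v_J)$ indexed over non-empty $J \subset I$. Each $X'_J$ is smooth by the smooth reduced crossings hypothesis and proper over $S$ as a closed subscheme of the proper $X$, so the standard rigidity for smooth proper schemes recalled at the end of \Cref{sec:notations}, combined with the $\otimes$-invertibility of the Thom spectra $\Th(v_J)$, shows each $\htp_S(X'_J, v_J)$ is rigid. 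Since the dualizable objects of $\SH(S)$ form a thick subcategory stable under finite colimits, $\htp_S(X,v)$ is itself rigid. To identify the dual, I would invoke \Cref{lm:pre-duality}(1), which yields $\uHom(\cohtp_S^c(X,v), \un_S) \simeq \htp_S^c(X,-v)$; properness of $X/S$ turns $\alpha_{X/S}$ and $\alpha'_{X/S}$ into isomorphisms, so the superscripts $(-)^c$ disappear and both sides are rigid. A second dualization then gives $\htp_S(X,v)^\vee \simeq \cohtp_S(X,-v)$.

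For part (2), let $j\colon X \hookrightarrow \bar X$ and $i\colon \partial X \hookrightarrow \bar X$ be the complementary immersions and $\bar f\colon \bar X \to S$ the structure map. Regularity of $\bar X$ and the $K_0$-localization sequence imply $K_0(\bar X) \twoheadrightarrow K_0(X)$, so $v$ extends to some virtual bundle $\bar v$ on $\bar X$. Applying the localization triangle $j_!j^* \to \mathrm{id} \to i_*i^*$ to $\Th(\bar v) \otimes \bar f^!(\un_S) \simeq \Th(\bar v + T_{\bar X})$, using the smoothness of $\bar X$, and pushing forward by $\bar f_! = \bar f_*$ produces a homotopy exact sequence
\[
\htp_S(X,v) \to \htp_S(\bar X, \bar v) \to \cohtp_S\bigl(\partial X,\, \bar v|_{\partial X} + T_{\bar X}|_{\partial X}\bigr).
\]
The middle term is rigid because $\bar X/S$ is smooth proper, and the right-hand term is rigid by part (1) applied to $\partial X$: indeed, $\cohtp_S(\partial X, w)$ is the dual of the rigid object $\htp_S(\partial X, -w)$. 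Thickness of the dualizable subcategory then forces $\htp_S(X,v)$ to be rigid.

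With rigidity in hand, the dual identification in part (2) is immediate from \Cref{cor:Atiyah}: the structural morphism $f\colon X\to S$ is smooth as an open of the smooth $\bar X$, hence h-smooth with virtual tangent bundle $\tau_f = T_X$, so the corollary directly produces $\htp_S(X,v)^\vee \simeq \htp_S^c(X,-v-T_X)$. The main obstacle I anticipate is the stability of rigidity under the finite semi-simplicial colimit invoked in part (1): justifying this cleanly requires working at the level of the stable $\infty$-categorical enhancement $\iSH(S)$ (available in our setting by \Cref{sec:notations}), so that the thick subcategory of dualizable objects is unambiguous and closed under finite (co)limits. Beyond this, the proof is essentially a formal assembly of the tools built up in \S\ref{sec:crossing_sing} and \S\ref{section:Atiyah}.
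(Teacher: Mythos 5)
Your proposal is correct and follows essentially the same route as the paper: part (1) is the ordered \v Cech decomposition of \Cref{ex:sncexample1} into smooth proper (hence rigid) strata plus \Cref{lm:pre-duality}(1), and your localization-triangle argument for part (2) is exactly how \Cref{cor:snccorollary2} is proved, with \Cref{cor:Atiyah} standing in for the first assertion of \Cref{thm:diagonal}. Your explicit attention to extending $v$ to $\bar X$ and to the $\infty$-categorical stability of rigid objects under finite (co)limits is more careful than the paper's own two-line proof, not less.
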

\begin{proof}
For point (1): 
according to \Cref{equation:snches2}, $\htp_S(X,v)$ is a colimit of a finite diagram whose components are rigid spectra
(as they are given by spectra of smooth proper schemes). 
This implies $\htp_S(X,v)$ is rigid. 
The fact that its dual is $\cohtp_S(X,-v)$ follows from \Cref{lm:pre-duality}(1).

For point (2), one applies \Cref{cor:snccorollary2} and the first assertion of \Cref{thm:diagonal}.
\end{proof}

\begin{rem}
\label{rem:strong_duality}
In case (1), the rigid spectrum $\htp_S(X,v)$ is isomorphic to the homotopy colimit of the diagram
$$
\xymatrix@=20pt{
\Pi_S(X'_I,v_I)\ar[r]
&\hdots\ar@<3pt>[r]\ar@<-3pt>[r]\ar[r] 
& \bigoplus_{J \subset I, \sharp J=2} \Pi_S(X'_J,v_J)\ar@<2pt>[r]\ar@<-2pt>[r]
& \bigoplus_{i \in I} \Pi_S(X'_i,v_i)
}
$$
Moreover, 
its dual $\cohtp_S(X,-v)$ is isomorphic to the homotopy limit of the diagram
$$
\xymatrix@=20pt{
\bigoplus_{i \in I} \Pi_S(X'_i,-v_i-T_i)\ar@<2pt>[r]\ar@<-2pt>[r]
& \bigoplus_{J \subset I, \sharp J=2} \Pi_S(X'_J,-v_J-T_J)\ar@<3pt>[r]\ar@<-3pt>[r]\ar[r] 
& \hdots\ar[r]
& \Pi_S(X'_I,-v_I-T_I)
}
$$
Here, 
for $J \subset I$, 
$T_J$ (resp. $v_J$) is the tangent bundle of (resp. pullback of $v$ over) $X'_J/S$.
A similar computation is valid in case (2) using \Cref{cor:snccorollary2}.
\end{rem}

\begin{ex}
Combining (ii) with the theorem of embedded resolution of singularities shows that for any smooth separated scheme $X$ 
over a field $k$ of characteristic $0$, 
the spectrum $\htp_k(X,v)$ is rigid with dual $\htp_k^c(X,-v-T_X)$. 
So the above theorem vastly generalizes the case of geometric motives from \cite[Chapter 5, 4.3.7]{FSV}.
\end{ex}

\begin{rem}
When $S$ is of positive dimension, 
one cannot expect all constructible spectra to be rigid (compare with \cite{motiviclandweber,RiouDual}).
Properness or smoothness does not ensure rigidity.
For example, 
let $U \subset S$ be an open subscheme with non-empty complement $Z$.
Assume $i:Z \rightarrow S$ is h-smooth (for example $Z$ and $S$ are smooth over a field $k$).
We claim that $\htp_S(U)=j_!(\un_U)$ is not rigid. 
Assuming the contrary, 
its dual would be isomorphic to $j_*(\un_U)$ according to \Cref{lm:pre-duality}. 
Since $i^*$ is monoidal,
it would follow that $i^*j_!(\un_U)$ is rigid with dual $i^*j_*(\un_U)$.
The first spectrum is trivial, 
whereas purity identifies the second one with $\un_Z \oplus \Th(N_ZS)[1]$ which is nontrivial.
An identical (dual) argument shows that $\htp_S(Z)$ is not rigid.
\end{rem}

Next we show an improvement of \Cref{thm:generalizedhomotopypurity}.

\begin{thm}
\label{thm:generalizedhomotopypurity2}
Consider a separated $S$-scheme $X$ which admits a smooth compactification $\bar p:\bar X \rightarrow S$ 
with tangent bundle $\bar T$ and boundary $\nu:\partial X \rightarrow \bar X$ a normal crossing $S$-scheme.

Then there exists a canonical isomorphism
$$
\htp_S(\bar X/X) \simeq \htp_S(\partial X,-\nu^*\bar T)^{\vee}
$$
Here, 
$\htp_S(\partial X,-\nu^*\bar T)^\vee$ is the dual of the rigid motive $\htp_S(\partial X,-\nu^*\bar T)$
(according to \Cref{thm:strong_duality}). 

Moreover, 
the map $\beta'$ obtained by adjunction from
$$
\beta:\htp_S(\partial X) 
\xrightarrow{\nu_*}  \htp_S(\bar X) 
\rightarrow \htp_S(\bar X/X) 
\simeq \htp_S(\partial X,-\nu^*\bar T)^\vee
$$
fits into the commutative diagram
$$
\xymatrix@=20pt{
\htp_S(\partial X) \otimes \htp_S(\partial X,-\nu^*\bar T)\ar^-{\beta'}[rr]\ar_{Id \otimes \nu_*}[d] && \un_S \\
\htp_S(\partial X) \otimes \htp_S(\bar X,-\bar T)\ar_\simeq^{(*)}[r] & 
\htp_S(\partial X \times_S \bar X,-p_2^*\bar T)\ar^-{\gamma_\nu^!}[r] & 
\htp_S(\partial X)\ar_{q_*}[u]
}
$$
where $\gamma_\nu$ is the graph of the closed immersion $\nu:\partial X \rightarrow \bar X$.
\end{thm}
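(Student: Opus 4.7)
The strategy splits into two parts matching the statement.

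For the first isomorphism, the plan is to compute $\htp_S(\bar X/X)$ directly via the six functor formalism and then invoke duality. Smoothness of $\bar p\colon\bar X\to S$ gives $\bar p^!(\un_S)\simeq\Th(\bar T)$. Applying the localization triangle $j_!j^*\to \Id\to\nu_*\nu^*$ in $\SH(\bar X)$ to $\Th(\bar T)$ and pushing forward via $\bar p_*=\bar p_!$ (using properness of $\bar X$) identifies
$$
\htp_S(\bar X/X)\simeq \bar p_*\nu_*\nu^*\Th(\bar T)=q_*\Th(\nu^*\bar T)=\cohtp_S(\partial X,\nu^*\bar T),
$$
where $q=\bar p\circ\nu$ is proper. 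Because $\partial X$ is proper over $S$ (being closed in the proper $\bar X$) and has smooth reduced crossings, \Cref{thm:strong_duality}(1) asserts that $\htp_S(\partial X,-\nu^*\bar T)$ is rigid with dual $\cohtp_S(\partial X,\nu^*\bar T)$, which combined with the previous display delivers the first isomorphism.

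For the commutative square, I would mirror the proof of \Cref{thm:diagonal}, with the graph $\gamma_\nu\colon\partial X\to\partial X\times_S\bar X$ playing the role of the diagonal. The key geometric observation is that the first projection $p_1\colon\partial X\times_S\bar X\to\partial X$, being the base change of the smooth $\bar p$ along $q$, is smooth with relative tangent $p_2^*\bar T$; consequently $\gamma_\nu=(\Id,\nu)$ is a regular closed immersion (a section of $p_1$) whose normal bundle is $\gamma_\nu^*p_2^*\bar T=\nu^*\bar T$. From $p_1\circ\gamma_\nu=\Id_{\partial X}$ and the associativity of fundamental classes in \cite[Theorem 3.3.2]{DJK}, one obtains $\eta_{\gamma_\nu}\cdot\eta_{p_1}=1$, while the transversal base change formula for the Tor-independent Cartesian square with corners $\partial X\times_S\bar X$, $\bar X$, $\partial X$, $S$ (Tor-independent because $\bar p$ is smooth) yields $q^*\eta_{\bar p}=\eta_{p_1}$. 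Exactly as in the diagonal case, these two identities combine to identify the pairing $\beta'$, obtained from $\beta$ by adjunction, with the composite along the bottom of the square: the K\"unneth isomorphism $(*)$, followed by the Gysin map of $\gamma_\nu$, followed by $q_*$.

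The main difficulty is the twist-bookkeeping in the second part: one must trace through the duality pairing of \Cref{lm:pre-duality}(1) and the strong duality of \Cref{thm:strong_duality}(1), whose construction relies on the colimit description from \Cref{cor:snccorollary2}, and verify that the identification $\htp_S(\bar X/X)\simeq\htp_S(\partial X,-\nu^*\bar T)^\vee$ used to \emph{define} $\beta$ is compatible with the graph-Gysin description on the bottom of the diagram. This is analogous to the diagonal case in \Cref{thm:diagonal} but more delicate since $\partial X$ has genuine crossing singularities rather than being $S$-smooth; the unit/counit manipulations themselves are formal, but the simultaneous interplay of several adjunctions and K\"unneth identifications is where the real work lies.
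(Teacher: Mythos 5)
Your proposal is correct and follows essentially the same route as the paper: the first isomorphism is the $v=0$ case of \Cref{thm:generalizedhomotopypurity} (which you re-derive directly from the localization triangle for $\Th(\bar T)=\bar p^!(\un_S)$) combined with \Cref{thm:strong_duality}(1), and the commutative square is obtained by transposing the proof of \Cref{thm:diagonal} to the graph $\gamma_\nu$, using that it is a section of the smooth projection $p_1$ together with the associativity and transversal base-change formulas for fundamental classes. The paper compresses the second step into "a routine check", so your explicit identities $\eta_{\gamma_\nu}\cdot\eta_{p_1}=1$ and $q^*\eta_{\bar p}=\eta_{p_1}$ are exactly the content it leaves implicit.
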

\begin{proof}
The first assertion is a combination of Theorems 
\ref{thm:generalizedhomotopypurity}, \ref{thm:strong_duality}.
For the second assertion, 
let us note that $\gamma_\nu$ is a section of the smooth separated morphism 
$\Id \times_S \bar p:\partial X \times X \bar X \rightarrow \partial X$.
So it is a regular closed immersion whose normal bundle is isomorphic to the tangent bundle of $\Id \times_S \bar p$,
that is $p_2^*\bar T$. 
This justifies the existence of the Gysin map $\gamma_\nu^!$ in \ref{num:Gysin}.
Secondly, 
the isomorphism ($*$) follows from the computation of $\pi_S(\partial X)$ performed in \Cref{ex:sncexample1}
and the K{\"u}nneth isomorphism.
A routine check using the definitions of the maps show that the diagram commutes.
\end{proof}
 
\begin{num}
\Cref{thm:generalizedhomotopypurity2} gives a new tool for computing stable motivic homotopy types at infinity.
According to \Cref{prop:basic_comput_thp-infty},
$\Pi^\infty_S(X)$ is isomorphic to the homotopy fiber of the map
\[
\beta:\htp_S(\partial X) \rightarrow \htp_S(\bar X/X)
\]
Using the isomorphism in the above theorem,
the commutative diagram tells us that $\beta$ can be computed as the class
$$
(\Id \times_S \nu)^*([\Gamma_\nu]) \in \Pi^{2d,d}(\partial X \times_S \partial X,p_2^* \bar T)
$$
Here $d$ is the dimension of $\bar X$, 
and $[\Gamma_\nu]$ is the fundamental class of the graph of $\nu$ in 
\[
\Pi^{2d,d}(\partial X \times_S \bar X,p_2^* \bar T)
\]
via the canonical isomorphism
\begin{align*}
[
\htp_S(\partial X),\htp_S(\bar X/X)] &\simeq [\htp_S(\partial X),\htp_S(\partial X,-\nu^*\bar T)^\vee] \\
& \simeq [\htp_S(\partial X) \otimes \htp_S(\partial X,-\nu^*\bar T),\un_S]
=
\Pi^{2d,d}(\partial X \times_S \partial X,p_2^* \bar T)
\end{align*}
The latter isomorphism is a consequence of the K\"unneth formula
$$
\htp_S(\partial X) \otimes \htp_S(\partial X,-\nu^*\bar T) \simeq \htp_S(\partial X \times \partial X,-p_2^*\bar T)
$$
which follows from the smooth case and \Cref{ex:sncexample1}.
\end{num}

\subsection{Application to stable motivic homotopy types at infinity}

\begin{num}\label{num:maincomputation}
We are ready for our main computation,
following \Cref{thm:compute_closed_cover} and \Cref{prop:basic_comput_thp-infty}.
Let $X/S$ be separable with compactification $(\bar X,\partial X)$ over $S$.
In addition, 
we assume that $\bar X$ is proper and smooth over $S$ and $\partial X$ is a normal crossing divisor on $\bar X$,
see \Cref{df:normal_crossing}.
We write $\partial X=\cup_{i \in I} \partial_i X$ for the decomposition of $\partial X$ into its irreducible components, 
and we consider $\partial_i X$ as a reduced subscheme of $\bar X$.
There is a canonical closed immersion $\nu_i:\partial_i X \rightarrow \bar X$.
For a subset $J \subset I$, we equip $\partial_J X=\cap_{j \in J} \partial _j X$ with its reduced subscheme structure. 
There is a canonical closed immersion $\nu_K^J:\partial_K X \rightarrow \partial_J X$ for subsets $J \subset K\subset I$.
\end{num}

\begin{thm}
\label{thm:maincomputation}
Consider the above assumptions and notations.
The stable homotopy type at infinity $\Pi_{S}^\infty(X)$ of $X/S$ is canonically isomorphic to the 
homotopy fiber of the map
$$
\colim_{n \in (\Dinj)^{op}} \left(\bigoplus_{J \subset I, \sharp J=n+1} \Pi_{S}(\partial_JX)\right)
\xrightarrow \mu \underset{n \in \Dinj}\lim
\left(\bigoplus_{J \subset I, \sharp J=m+1} \Pi_{S}(\partial_J X,\twist{N_J})\right)
$$
Here, 
the direct images define the face maps in the source
$$
\delta_n^k=\sum_{K=\{i_0<\hdots<i_n\}, J=\{i_0<\hdots<\not{i_k}<\hdots<i_n\}} (\nu_{K}^J)_*
$$
and the Gysin maps define the coface maps in the target
$$
\tilde \delta^m_l=\sum_{K=\{i_0<\hdots<i_m\}, J=\{i_0<\hdots<\not{i_l}<\hdots<i_m\}} (\nu_{K}^J)^!
$$
Moreover, 
$\mu$ is induced by the canonical map in degree zero 
$$
(\nu_j^!\nu_{i*})_{i,j\in I}
\colon \bigoplus_{i \in I} \Pi_{S}(\partial_iX) 
\longrightarrow \bigoplus_{j \in I} \Pi_{S}(\partial_j X,\twist{N_j})
$$
\end{thm}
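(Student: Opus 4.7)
The plan is to combine \Cref{prop:basic_comput_thp-infty}, which presents $\Pi_S^\infty(X)$ as the homotopy fiber of a single map $\beta\colon \Pi_S(\partial X)\to \Pi_S(\bar X,X)$, with a two-sided application of the \v{C}ech descent result \Cref{thm:compute_closed_cover} relative to the closed cover $p\colon \sqcup_{i\in I} \partial_i X \to \partial X$. The first step is to rewrite the target in a form suitable for descent: since $\bar X/S$ is smooth proper, $\bar f^!(\un_S)\simeq \Th(T_{\bar X})$, and by \Cref{rem:pi_infty&functoriality} one has $\Pi_S(\bar X,X)\simeq \bar f_! i_* i^*(\Th(T_{\bar X}))$ while $\beta$ is induced by the counit $\nu_*\nu^!\to \id$ applied to $\Th(T_{\bar X})$ and followed by $\bar f_!=\bar f_*$.

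For the source, apply \Cref{thm:compute_closed_cover} to $(p,\un_S)$ in $\SH(S)$ (as in \Cref{ex:sncexample1}) to identify $\Pi_S(\partial X)$ with the colimit of $\bigoplus_{|J|=n+1}\Pi_S(\partial_J X)$, whose face maps are the direct images $(\nu_K^J)_*$. For the target, apply the dual half of \Cref{thm:compute_closed_cover} inside $\SH(\bar X)$ to the same closed cover, now with coefficients $\Th(T_{\bar X})$. This decomposes $i_*i^*\Th(T_{\bar X})$ as the limit of $\bigoplus_{|J|=n+1}\nu_{J*}\Th(\nu_J^*T_{\bar X})$. Each $\partial_J X$ is smooth over $S$ (the hypothesis on $\partial X$ as a normal crossing divisor on $\bar X$), so the conormal sequence gives $\nu_J^*T_{\bar X}\simeq T_{\partial_J X/S}+N_J$ in K-theory and hence $\Th(\nu_J^*T_{\bar X})\simeq g_J^!(\un_S)\otimes \Th(N_J)$ via h-smoothness. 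Pushing forward by $\bar f_*$ and using $g_J=\bar f\circ \nu_J$ proper converts this into $\Pi_S(\partial_J X,\langle N_J\rangle)$; the coface maps, which are pullbacks $(\nu_K^J)^*$ in the \v{C}ech complex, translate under the purity identification of \Cref{ex:purity&comparison} into the Gysin maps $(\nu_K^J)^!$, giving the claimed presentation of $\Pi_S(\bar X,X)$.

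It remains to identify $\beta$, under these two decompositions, with the map $\mu$ described in the statement. By naturality of the \v{C}ech decompositions in the coefficient spectrum, $\beta$ is determined once one knows its effect on each summand in the lowest bidegree. Tracing the counit $\nu_{i*}\nu_i^!\to \id$ applied to $\Th(T_{\bar X})$, and composing with $\nu_j^!$ to land in the $j$-th summand of the target limit, one sees that the $(i,j)$-component is exactly $\nu_j^!\nu_{i*}\colon \Pi_S(\partial_i X)\to \Pi_S(\partial_j X,\langle N_j\rangle)$. All higher compatibilities with face maps (on the source) and coface maps (on the target) then follow automatically from the functoriality of the \v{C}ech decompositions and the base change/associativity properties of fundamental classes from \cite{DJK}.

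The main obstacle is not in any single isomorphism but in showing that the two \v{C}ech decompositions are glued together coherently by $\beta$ at the $\infty$-categorical level: one must know that the purity identification $\Th(\nu_J^*T_{\bar X})\simeq g_J^!(\un_S)\otimes \Th(N_J)$, which converts pullbacks into Gysin maps on the target, assembles into a genuine map of semi-cosimplicial objects in $\iSH(S)$, and that the counit map inducing $\beta$ respects the semi-simplicial structure produced by $p$ on the source. Both ingredients are available through the $\infty$-categorical enhancement of the six functor formalism recalled in \Cref{sec:notations} and the coherent system of fundamental classes of \cite{DJK}, so the argument reduces to an unwinding of these formalisms rather than a fresh calculation.
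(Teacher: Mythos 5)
Your proposal is correct and follows the paper's overall strategy: present $\Pi_S^\infty(X)$ as the homotopy fiber of $\beta_X\colon \Pi_S(\partial X)\to\Pi_S(\bar X,X)$ via \Cref{prop:basic_comput_thp-infty}, decompose the source by \Cref{thm:compute_closed_cover} as in \Cref{ex:sncexample1}, decompose the target as a limit of twisted homotopy types, and read off $\mu$ in degree zero. The one point where you diverge is the target: the paper invokes the duality statement of \Cref{thm:generalizedhomotopypurity2} together with \Cref{thm:compute_closed_cover} and the rigidity discussion of \Cref{rem:strong_duality}, passing through $\htp_S(\bar X/X)\simeq \htp_S(\partial X,-\nu^*\bar T)^\vee$ and then dualizing term by term, whereas you apply the cohomological half of \Cref{thm:compute_closed_cover} directly to $i_*i^*\Th(T_{\bar X})$ over $\bar X$ and then use purity for the regular closed immersions $\nu_J$ to rewrite each term as $\Pi_S(\partial_J X,\twist{N_J})$ and to convert the \v{C}ech restriction maps into Gysin maps. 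Your route is in effect the argument of \Cref{cor:snccorollary2}, so it avoids any appeal to rigidity/dualizability of $\htp_S(\partial X,-\nu^*\bar T)$; both routes hinge on the same identity $\twist{\bar T}-\twist{T_{\partial_J X}}=\twist{N_J}$, and both leave the same $\infty$-categorical coherence of the two decompositions under $\beta$ to the enhanced six-functor formalism and the associativity of fundamental classes, which you correctly flag as the only genuinely delicate point.
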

\begin{proof}
According to \Cref{prop:basic_comput_thp-infty}, the spectrum
$\Pi_{S}^\infty(X)$ is the homotopy fiber of the map $\beta_X\colon\Pi_{S}(\partial X) \rightarrow \Pi_{S}(\bar X,X)$.
\Cref{thm:compute_closed_cover} (see \Cref{ex:sncexample1}) identifies $\mu$'s source with the desired colimit.
An application of Theorems \ref{thm:generalizedhomotopypurity2}, \ref{thm:compute_closed_cover}, 
(see also \Cref{rem:strong_duality})
identifies $\mu$'s target with the desired spectrum on account of the identity 
$$
\twist{\bar T}-\twist{T_{\partial_iX}}=\twist{N_i}
$$
between (virtual) vector bundles over $\partial_i X$.
The computation of the (co)face maps and $\mu$ follows from the definition of the Gysin maps.
\end{proof}

One can suggestively summarize the computation in \Cref{thm:maincomputation} with the diagram
\begin{gather*}
\left(
\vcenter{
\xymatrix@R=10pt{
	\ar@{..}[d] & \\
	\bigoplus_{i_1<i_2} \Pi_{S}(\partial_{i_1i_2}X)\ar@<4pt>[d]\ar@<-4pt>[d] & \\
	\bigoplus_{i \in I} \Pi_{S}(\partial_iX)\ar^-{\mu}[r] 
	& \bigoplus_{j \in I} \Pi_{S}(\partial_jX,\twist{N_j})\ar@<4pt>[d]\ar@<-4pt>[d] \\
	& \bigoplus_{j_1<j_2} \Pi_{S}(\partial_{j_1j_2}X,\twist{N_{j_1j_2}})\ar@{..}[d] \\
	& 
}
}
\right)
\end{gather*}

\section{A motivic analog of Mumford's plumbing game}
\label{section:Mumford}

\begin{num}
\label{num:general_Mumford}
In this section, 
we consider (relative) compactified smooth surfaces whose boundary is a union of (relative) rational curves.
Inspired by the technique of ``plumbing" disk bundles over manifolds, 
this case was first studied by Mumford \cite{mumfordihes} in the context of normal surface singularities, 
e.g., rational double points.
\vspace{0.1in}

We work in the setting of \Cref{num:maincomputation}, 
where $S$ is a base scheme, 
$X/S$ is a smooth family of surfaces with a smooth proper compactification $\bar X/S$,
whose boundary $\partial X=\cup_{i \in I} \partial_i X$ is a normal crossing divisor on $\bar X$ 
(see \Cref{df:normal_crossing}).
We also assume that each branch $\partial_i X$, with its reduced schematic structure,
is a rational curve over $S$.
For each $i\in I$, we fix an $S$-isomorphism $\epsilon_i:\partial_i X \rightarrow \PP^1_S$.
The choice of isomorphism determines $3$ distinct $S$-points on $\partial_i X$, 
say $0,1,\infty$. 
For $i<j$ in $I$, 
we let $\partial_{ij} X$ be the reduction of $\partial_i X \times_S \partial_j X$, 
which is a finite \'etale $S$-scheme.
We consider the immersion $\nu_{ij}^k:\partial_{ij} X \rightarrow \partial_{k} X$,
 for $k=i,j$.
Up to changing the isomorphism $\epsilon_i$ we may assume:
\begin{equation}
\label{eq:Mumford_good_points}
\text{the image of $\epsilon_i \circ \nu_{ij}^k$ does not meet the point at infinity}
\end{equation}
\end{num}

\subsection{Quadratic intersection degrees}

\begin{num}
\label{num:fdl_classes}
Recall from \cite{DJK} that to a regularly embedded closed subscheme $i:Z \rightarrow X$ one can associate 
a fundamental class $\eta_X(Z)$;
it takes value in cohomotopy with support\footnote{This is the map \eqref{eq:fdl_class} for $f=i$,
evaluated at $\un_X$.}
$$
\eta_X(Z) \in \Pi^0_Z(X,\twist{N_ZX}):=[i_*(\Th(-N_ZX)),\un_X](\simeq H_0(Z/X,-N_ZX))
$$
If $X$ and $Z$ are smooth over the base scheme $S$, 
the said class comes via Morel-Voevodsky's homotopy purity theorem
$$
\mathfrak p_{Z/X}:\Pi^0(Z) \simeq \Pi^0_Z(X,\twist{N_ZX}), 1 \mapsto \eta_X(Z)
$$
\end{num}

\begin{df}\label{df:quad_intersect}
Let $S$ be a base scheme, and $X$ a smooth (proper) $S$-scheme.
Let $D \subset X$ be an effective Cartier divisor 
(\emph{i.e.,} in our case a closed subscheme of $X$ of pure codimension $1$) 
and $C/S$ a smooth relative curve with an embedding $i:C \rightarrow X$.
We set $\bar Z=D \times_X C$, $Z=\bar Z_{red}$, and assume the following holds:
\begin{enumerate}
\item The intersection of $D$ and $C$ in $X$ is proper: $\bar Z$ is finite over $S$
\item The projection map $f:Z \rightarrow S$ is 
\'etale\footnote{This is automatic if $S$ is the spectrum of a perfect field.}
\end{enumerate}
The (unoriented) quadratic intersection degree of $D$ and $C$ in $X$ is the class 
$$
(D,C)_{quad}=\tdeg\big(i^*(\eta_X(D)\big)
\in 
\Pi^0(S)
$$
Here, 
$i^*:\Pi^0_D(X,\twist{N_DX}) \rightarrow \Pi^0_Z(C,\twist{N_ZC})$ is the pullback 
map\footnote{The proper intersection assumption gives a canonical isomorphism: $N_ZC \simeq i^*(N_DX)$.}, 
$f_*:\Pi^0(Z) \rightarrow \Pi^0(S)$ is the Gysin map associated with the finite \'etale morphism $f$, 
see \cref{num:Gysin}, 
and the (quadratic) degree map is the composite
$$
\tdeg:=f_* \circ \mathfrak p_{C/Z}^{-1}
$$
\end{df}

In this definition's generality, 
the quadratic intersection degree can be interpreted as a family of quadratic forms. 
Indeed, by a pullback, one can specialize $(D,C)_{quad}\in\Pi^0(S)$ to any point $x \in S$ of the base.
This yields an element of $\Pi^0(x) \simeq \GW(\kappa(x))$, 
the Grothendieck-Witt ring of the field $\kappa(x)$ according to 
\cite{MorLNM}.\footnote{When bad reduction can occur, 
specializing at a point does not preserve the normal crossing divisor.
Nevertheless, for good reduction, 
one can show that our definition of quadratic intersection degree specializes correctly.}
\vspace{0.1in}

For certain base schemes, we can give a more concrete formula for the quadratic intersection degree. 
Let us first note the following consequence of Morel's proof of the Gersten resolution for homotopy sheaves 
(and modules, see \cite{MorLNM}).

\begin{lm}
\label{lm:htp_semi_local}
If $S=\Spec(\mathcal O)$ is an essentially smooth semi-local $k$-scheme, 
then there is a canonical 
isomorphism\footnote{This is generalized to $\ZZ[1/2]$ and other number rings in \cite{2021arXiv210201618B}.}
$$
\GW(\mathcal O)=\Gamma(S,\uKMW) \xrightarrow{\simeq} \Pi^{0,0}(S)
$$
The left-hand-side is the Grothendieck-Witt ring of $\mathcal O$.
\end{lm}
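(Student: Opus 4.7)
The plan is to reduce the computation of $\Pi^{0,0}(S) = [\un_S,\un_S]_{\SH(S)}$ to Morel's classical identification over the perfect base field $k$, and then invoke the Gersten resolution. Let $p \colon S \to \Spec(k)$ denote the essentially smooth structural morphism. Since $\un_S = p^*\un_k$ and $p_\#\un_S \simeq \Sigma^\infty S_+$ (interpreted in the appropriate pro-sense when $p$ is only essentially smooth), the smooth adjunction $(p_\#,p^*)$ yields
\[
\Pi^{0,0}(S) = [\un_S,\un_S]_{\SH(S)} \simeq [\Sigma^\infty S_+, \un_k]_{\SH(k)} = \pi^{0,0}(S).
\]

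Next, I would compute the right-hand side via the Nisnevich descent spectral sequence
\[
E_2^{p,q} = H^p_{\nis}\bigl(S,\underline{\pi}_{-p-q,0}(\un_k)\bigr) \Rightarrow \pi^{p+q,0}(S).
\]
Morel's $\AA^1$-connectivity theorem guarantees $\underline{\pi}_{n,0}(\un_k)=0$ for $n<0$, so the only potential contributions to the $(0,0)$-diagonal sit in $H^p_{\nis}(S,\underline{\pi}_{-p,0}(\un_k))$ with $p \geq 0$. Morel's main theorem identifies $\underline{\pi}_{0,0}(\un_k) \simeq \uKMW_0$ as Nisnevich sheaves on $\Sm_k$, whose value on $\Spec(\mathcal O)$ is precisely $\GW(\mathcal O)$. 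The Gersten conjecture, proved by Morel for strictly $\AA^1$-invariant sheaves (and in particular for the Milnor-Witt K-theory sheaves), then shows that $H^i_{\nis}(S,\uKMW_0)=0$ for $i>0$ whenever $S$ is essentially smooth semi-local over $k$, killing every filtration $>0$ contribution. Thus the spectral sequence degenerates at the $(0,0)$-spot to yield
\[
\Pi^{0,0}(S) \simeq H^0_{\nis}(S,\uKMW_0) = \Gamma(S,\uKMW) = \GW(\mathcal O),
\]
and the identification is canonical since every ingredient (Morel's isomorphism, the Gersten resolution, and the smooth adjunction) is canonical.

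The main obstacle will be twofold. First, extending Morel's smooth-finite-type computations to the essentially smooth semi-local setting; this is a standard filtered-colimit argument exploiting the continuity of $\SH$ with respect to cofiltered limits of schemes with affine transition maps, as recorded in \cite[Appendix A]{DFJK}, but it must be verified in detail. Second, ensuring that higher filtrations of the descent spectral sequence vanish requires combining Morel's connectivity theorem with the Gersten vanishing for $\uKMW_0$ on semi-local schemes; both are in the literature, and the combination suffices to force collapse onto the $(0,0)$-term without any ambiguity.
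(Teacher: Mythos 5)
Your overall strategy is exactly what the paper has in mind: the lemma is stated without proof, as ``a consequence of Morel's proof of the Gersten resolution for homotopy sheaves,'' and your combination of the smooth adjunction (via continuity for essentially smooth schemes), the Nisnevich descent spectral sequence, Morel's connectivity theorem, the identification $\underline{\pi}_{0,0}(\un_k)\simeq\uKMW_0$, and Gersten-type cohomological triviality on semi-local essentially smooth schemes is the standard way to make that citation precise.

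There is, however, one concrete indexing error in your analysis of the spectral sequence. With the convention $E_2^{p,q}=H^p_{\nis}(S,\underline{\pi}_{t}(\un_k))$ abutting to $\pi_{t-p}$, the entries contributing to total degree $0$ are $H^p_{\nis}\bigl(S,\underline{\pi}_{p,0}(\un_k)\bigr)$ for $p\geq 0$ — they involve the \emph{positive} stable homotopy sheaves, not $\underline{\pi}_{-p,0}$ as you wrote. Consequently these terms are not killed by Morel's connectivity theorem (which only handles genuinely negative stems), and the Gersten vanishing you invoke for $\uKMW_0$ alone does not suffice: you need $H^p_{\nis}(S,\underline{\pi}_{p,0}(\un_k))=0$ for all $p\geq 1$. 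The fix is immediate from the tools you already cite — by Morel's stable connectivity theorem every $\underline{\pi}_{p,0}(\un_k)$ is a strictly $\AA^1$-invariant Nisnevich sheaf, hence admits a Gersten resolution, hence has vanishing higher Nisnevich cohomology on any essentially smooth semi-local $k$-scheme — but as written your argument applies connectivity where it does not apply and Gersten to the wrong sheaf. With that correction the proof is complete and agrees with the paper's intended argument.
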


\begin{prop}\label{prop:quadratic-inter}
Let $S=\Spec(\mathcal O)$ be an essentially smooth semi-local scheme over a field $k$.
Under the assumptions in Definition \Cref{num:fdl_classes}, 
let $x$ be a generic point of $Z$. 
We write $Z_x=\Spec(B_x)$ for the connected component of $Z$ containing $x$ and moreover:
\begin{itemize}
\item $m_x=\lg(\mathcal O_{\bar Z,x})$ for the geometric multiplicity of $\bar Z$ at $x$, 
which is the intersection multiplicity of $Z$ and $C$ at $x$
\item $\tau_x$ for the class in the Grothendieck-Witt ring $\GW(\mathcal O)$ of the non-degenerate bilinear form
$$
\varphi_x:B_x \otimes_\mathcal O B_x \rightarrow \mathcal O, x \otimes y \mapsto \mathrm{Tr}_{B_x/\mathcal O}(xy)
$$
Here, 
$\mathrm{Tr}_{B_x/\mathcal O}$ is the trace form of the finite flat $\mathcal O$-algebra 
$B_x$\footnote{Recall the trace form $\varphi$ is non-degenerate if and only if $B_x$ is \'etale over $\mathcal O$.}
\end{itemize}
Then the quadratic intersection degree of $D$ and $C$ in $X$ equals 
$$
(D,C)_{quad}=\sum_{x \in Z^{(0)}} (m_x)_\epsilon\tau_x 
\in 
\Pi^{0,0}(S) \simeq \GW(\mathcal O)
$$
Here, for an integer $n>0$, we set $n_\epsilon=\sum_{i=1}^{n} \langle (-1)^{i+1} \rangle$.
\end{prop}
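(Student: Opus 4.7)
The plan is to reduce to a local computation at each generic point $x$ of $Z$, extract the multiplicity factor $(m_x)_\epsilon$ from the non-reduced structure of $\bar Z$ at $x$, and then recognize the remaining finite étale Gysin transfer as the Scharlau trace form. First I decompose $Z=\bigsqcup_{x\in Z^{(0)}}Z_x$ into connected components; correspondingly $\bar Z=\bigsqcup\bar Z_x$ and $f=\sqcup f_x$. Since every ingredient in Definition \ref{df:quad_intersect}---pullback, purity, and Gysin transfer---is additive with respect to disjoint unions of closed subschemes, one obtains
$$
(D,C)_{quad}=\sum_{x\in Z^{(0)}} f_{x,*}\bigl(\mathfrak p_{C/Z_x}^{-1}(i_x^*\eta_X(D))\bigr),
$$
so it suffices to show that each summand equals $(m_x)_\epsilon\tau_x$.

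Fix $x$. Since $Z_x/S$ is finite étale and $C/S$ is a smooth relative curve, $Z_x\hookrightarrow C$ is a regular closed immersion of codimension one; working semi-locally, I choose an étale coordinate $\pi$ on $C$ near $Z_x$ cutting out $Z_x$. Proper intersection forces the ideal of $\bar Z_x$ inside $\mathcal O_{C,Z_x}$ to be $(\pi^{m_x})$, whence the canonical isomorphism $N_{\bar Z_x/C}|_{Z_x}\simeq N_{Z_x/C}^{\otimes m_x}$ of line bundles on $Z_x$. The transverse base change formula for fundamental classes \cite[Theorem 3.3.2]{DJK} identifies $i_x^*\eta_X(D)$ with $\eta_C(\bar Z_x)$ in twisted cohomotopy with support, so the task is to compute the latter under the purity isomorphism $\mathfrak p_{Z_x/C}$.

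The key computational input is the identity $\eta_C(\bar Z_x)=(m_x)_\epsilon\cdot\eta_C(Z_x)$. To establish it, I factor $\pi^{m_x}$ as the composite $C\xrightarrow{\pi}\mathbf A^1_S\xrightarrow{t^{m_x}}\mathbf A^1_S$, so that $\bar Z_x$ is the preimage of the origin and, by transversality, $\eta_C(\bar Z_x)=\pi^*\bigl((t^{m_x})^*\eta_{\mathbf A^1_S}(\{0\})\bigr)$. The claim then reduces to a motivic degree computation on $\mathbf A^1_S$: the fundamental class of $\{t^{m_x}=0\}$ equals $(m_x)_\epsilon$ times that of $\{t=0\}$, a consequence of Morel's formula for the Brouwer degree of the $m_x$-th power map $\mathbf G_m\to\mathbf G_m$ \cite{MorLNM,DJK}. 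Applying $\mathfrak p_{C/Z_x}^{-1}$ sends $\eta_C(\bar Z_x)$ to $(m_x)_\epsilon\cdot\langle 1\rangle\in\GW(B_x)$.

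Finally I invoke the identification of the étale Gysin transfer with the Scharlau trace form: under Lemma \ref{lm:htp_semi_local}, the map $f_{x,*}\colon\pi^0(Z_x)\simeq\GW(B_x)\to\pi^0(S)\simeq\GW(\mathcal O)$ is characterized on $\langle 1\rangle$ by $f_{x,*}\langle 1\rangle=\tau_x$, which is a structural property of the transfers on the Milnor--Witt sheaves \cite{MorLNM}. Combining with the previous step yields $f_{x,*}\bigl(\mathfrak p_{C/Z_x}^{-1}(i_x^*\eta_X(D))\bigr)=(m_x)_\epsilon\tau_x$, and summing over $x$ completes the argument. The principal technical hurdle is the identification $N_{\bar Z_x/C}|_{Z_x}\simeq N_{Z_x/C}^{\otimes m_x}$ together with the precise bookkeeping of Thom twists under it, so that the quadratic multiplicity $(m_x)_\epsilon$---rather than the classical integer multiplicity $m_x$---emerges naturally from the fundamental class of a thickened point.
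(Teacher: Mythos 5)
Your argument is correct and follows essentially the same route as the paper: decompose by connected components, extract the factor $(m_x)_\epsilon$ from the $m_x$-th power map on a local parameter (Morel's quadratic Brouwer degree), and identify the finite \'etale transfer of $\langle 1\rangle$ with the trace form $\tau_x$. The only difference is one of packaging: where you re-derive the ramification identity $i^*\eta_X(D)=\sum_x (m_x)_\epsilon\,\eta_C(Z_x)$ by factoring through $t\mapsto t^{m_x}$ on $\AA^1_S$, the paper cites it directly as \cite[Theorem 2.2.2]{Feld2} (and cites \cite{Feld3} for $f_*(1)=\tau_x$), the underlying mechanism being the same map $\nu^{(m_x)}$ on Thom spaces with degree $(m_x)_\epsilon$.
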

\begin{proof}
We note that \cite[Theorem 2.2.2]{Feld2} implies the formula
$$
i^*\eta_X(D)=\sum_{x \in Z^{(0)}} (m_x)_\epsilon\eta_X(Z_x)
$$
Indeed, 
by additivity and localisation around $x$, 
one reduces to the case where $Z$ has a single connected component.
The conditions of \emph{loc. cit.} are fulfilled with $e=m_x$, 
according to our assumptions in Definition \Cref{num:fdl_classes}. 
Let $\nu^{(m_x)}:N_ZC \rightarrow i^*N_DX$ be the map given by sending a local parameter to its $m_x$-th power. 
The induced map $\nu^{(m_x)}_*:\Th(N_ZC) \rightarrow \Th(i^*N_DX)$ corresponds to the quadratic form $e_\epsilon$ 
under the isomorphism
$$
\lbrack \Th(N_ZC),\Th(i^*N_DX) \rbrack \simeq \lbrack \un_S,\Th(i^*N_DX-N_ZC)\rbrack 
\simeq \lbrack\un_S,\un_S\rbrack 
\simeq \GW(\mathcal O)
$$
Since $\tdeg$ commutes with multiplication by $(m_x)_\epsilon$, 
it suffices to check that $\tdeg(\eta_C(Z_x))=\tau_x$. 
As recalled in \ref{num:fdl_classes}, 
$\mathfrak p_{Z_x/C}(\eta_C(Z_x))=1$; 
thus we are reduced to showing $f_*(1)=\tau_x$, 
which follows using the computation of transfer maps in \cite{Feld3}.
\end{proof}

\begin{rem}
In particular, 
we have enhanced the classical notion of intersection degree to a quadratic version; 
one recovers the former from the latter by taking ranks of non-degenerate symmetric bilinear forms.
\end{rem}

\begin{num}\label{num:oriented_degree&class}
A merit of Definition \Cref{num:fdl_classes} is that it dispenses with orientation conditions.
To get a more general definition, we will need such conditions.
Suppose $S$ is defined over a field $k$.
We write $\HMW\ZZ_S$ for ring spectrum representing Milnor-Witt cohomology over $S$,
see \cite{2020arXiv200406634B}.
Recall that a virtual bundle $v$ over a scheme $X$ is orientable if
 the line bundle $\det(v)$ is divisible by $2$ in $\Pic(X)$.
 An orientation of $v$ is an isomorphism $\tau:\det(v) \rightarrow L^{\otimes,2}$
 for some line bundle $L/X$.
 Further, if $p:X \rightarrow S$ is an lci morphism
 with virtual tangent bundle $\tau_p$,
 we say that $p$ is orientable if $\tau_p$ is orientable; an orientation of $p$
 is an isomorphism $\tau:\omega_p=\det(\tau_p) \rightarrow L^{\otimes,2}$.

When $p$ is projective of relative dimension $d$, 
one can define the Milnor-Witt degree as the composite
\begin{align*}
\tdeg_{X/S}^\tau:\HMW^{2d,d}(X) 
\xrightarrow{\tau_*} \HMW^{0,0}(X,\twist{L_f})
\xrightarrow{p_*} \HMW^{0,0}(S)
\end{align*}
Here, 
the map $\tau_*$ is the isomorphism induced by  $\tau$,
since $\HMW \ZZ$ is $\SL^c$-orientable.
 
If now $p=i:Z \rightarrow X$ is a regular closed immersion of codimension $n$,
an orientation $\theta$ of $i$ is an orientation of the normal bundle $N_ZX$.
In that case, 
one defines the oriented cycle class $[Z]^\Theta_X$ of $Z$ in $X$ as the image of $\eta_X(Z)$ 
under the composite\footnote{Equivalently, 
when $X$ is smooth it is the image of $1$ under the composite map
$$
\CHt^0(Z) \xrightarrow{\Theta_*} \CHt^0(Z,\det(N_ZX)) \xrightarrow{i_*} \CHt^0(X)
$$}
$$
\Pi^0_Z(X,\dtwist{N_ZX}) 
\rightarrow
\HMWx Z^{0}(X,\twist{N_ZX})
\xrightarrow{\theta_*} \HMWx Z^{0}(X,\twist{n})
\rightarrow \HMW^{2n,n}(X)
$$
Here, 
the first map is induced by the unit of the ring spectrum $\HMW \ZZ_S$ and the second map forgets the support.
\end{num}

\begin{df}
\label{df:orient_quad_intersect}
Suppose $S$ is a $k$-scheme and $X$ a finite type $S$-scheme.
We consider a regular closed immersion $i:Z \rightarrow X$ of codimension $n$ such that $N_ZX$ is orientable, 
and a morphism $f:T \rightarrow X$ such that the projection $p:T \rightarrow S$ is lci projective,
orientable and of relative dimension $n$.
Let $\theta$ (resp. $\tau$) be an orientation of $i$ (resp. $p$).
 
The oriented quadratic intersection degree of $Z$ along $f$ over $S$ is the class 
$$
(Z,T)^{\theta,\tau}_{quad}=
\tdeg_{T/S}(f^*[Z]^\Theta_X)
\in 
\HMW^{0,0}(S).
$$
Though all the previous notions depends on the chosen orientations,
 we will sometimes simply write $[Z]_X^{or}$, $\tdeg^{or}_{X/S}$, $(Z,T)^{or}_{quad}$,
 the chosen orientation being intended.
\end{df}
In the case where $S=\Spec(\mathcal O)$
 is semi-local essentially smooth over $k$, 
 as explained in Lemma \ref{lm:htp_semi_local},
 the class $(Z,T)^{or}_{quad}$ leaves in $\GW(\mathcal O)$.
 
\begin{ex} We consider the notations of the previous definition.
\begin{enumerate}
\item We assume that $f:X \rightarrow S$ is smooth projective and oriented,
 of dimension $2n$,
 $Z$ and $T$ are closed regular subschemes of $X$ of codimension $n$,
 whose normal bundles are oriented.
 In this case, the oriented cycle class defined in \Cref{num:oriented_degree&class}
 are elements $[Z]_X^{or}, [T]_X^{or}$ in the Chow-Witt group $\CHt^n(X)$.
 Then the projection formula gives the following more symmetric formula
 for the oriented quadratic intersection degree:
$$
(Z,T)^{or}_{quad}=\tdeg^{or}_{X/S}([Z]_X^{or}.[T]_X^{or})
$$
using the intersection product of $\CHt^n(X)$ (hint: use the projection formula).
\item Assume that $Z=T$.
 Then according to the excess intersection formula
 \cite[Proposition 3.3.4]{DJK}, we get:
$$
(Z,Z)^{or}_{quad}=\tdeg{Z/S}\big(e^{or}(N_ZX)\big)
$$
where $e^{or}(N_ZX) \in \HMW^{2n,n}(Z)$ is the Euler class of the oriented vector bundle $N_ZX$.
\end{enumerate}
\end{ex}

\subsection{A universal homotopical formula}

\begin{num}
We follow the setting of \ref{num:general_Mumford}.
\Cref{thm:maincomputation} shows $\htp_S(\partial X)$ is the homotopy cofiber of the canonical map
$$
d=\sum_{i<j} \nu^i_{ij*}-\nu^j_{ij*}:
\bigoplus_{i<j} \htp_S(\partial_{ij} X) 
\rightarrow 
\bigoplus_{i \in I} \htp_S(\partial_{i} X)
$$
Moreover, we consider the isomorphism
\begin{equation}
\label{equation:lhs}
\epsilon
=
\sum_{i \in I} \epsilon_{i*}:\bigoplus_{i \in I} \htp_S(\partial_{i} X) 
\rightarrow 
\bigoplus_{i \in I} \un_S \oplus \bigoplus_{i \in I} \un_S(1)[2]
\end{equation}
\end{num}

The following lemma is immediate.
\begin{lm}
\label{lm:Mumford_source}
The composite $\epsilon \circ d$ factors through the inclusion of the direct summand $\bigoplus_{i \in I} \un_S$ 
on the right-hand-side of \eqref{equation:lhs}.
Moreover, 
the factorization coincides with the map
\begin{equation}
\label{eq:top_part_Mumford}
p=\sum_{i<j} p_{ij*}^i-p_{ij*}^j:\bigoplus_{i<j} \htp_S(\partial_{ij} X) \rightarrow \bigoplus_{i \in I} \un_S
\end{equation}
Here, 
$p_{ij}:\partial_{ij} X \rightarrow S$ is the canonical projection (a finite \'etale map),
 and for $k=i,j$, we have written $p_{ij*}^k$ for the projection on the $k$-th factor
 of the right hand-side.
In particular, 
if $\mathcal D_X$ is the homotopy cofiber of $p$,
there is a canonical isomorphism
$$
\htp_S(\partial X) \simeq \mathcal D_X \oplus \bigoplus_{i \in I} \un_S(1)[2]
$$
\end{lm}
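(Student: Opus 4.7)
The plan is to analyze the composite $\epsilon \circ d$ using the canonical motivic decomposition $\Sigma^\infty \PP^1_{S+} \simeq \un_S \oplus \un_S(1)[2]$ obtained by pointing $\PP^1_S$ at $\infty$. Under this splitting, the projection to $\un_S$ is induced by the structural map $\PP^1_S \to S$, while the projection to $\un_S(1)[2]$ is identified, via Morel-Voevodsky homotopy purity applied to the closed immersion $\{\infty\} \hookrightarrow \PP^1_S$, with the cofiber map of the localization sequence
$$\Sigma^\infty \AA^1_{S+} \to \Sigma^\infty \PP^1_{S+} \to \PP^1_S/\AA^1_S \simeq \un_S(1)[2].$$
In particular, the composite $\Sigma^\infty \AA^1_{S+} \to \un_S(1)[2]$ in this sequence is null.

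I would first verify the vanishing of the $\un_S(1)[2]$-coordinates of $\epsilon \circ d$. For each $k \in \{i,j\}$, hypothesis \eqref{eq:Mumford_good_points} asserts that the morphism $\epsilon_k \circ \nu^k_{ij} \colon \partial_{ij}X \to \PP^1_S$ factors through the open immersion $\AA^1_S \hookrightarrow \PP^1_S$. Consequently, $\epsilon_{k*} \circ \nu^k_{ij*}\colon \htp_S(\partial_{ij}X) \to \un_S \oplus \un_S(1)[2]$ factors through $\Sigma^\infty \AA^1_{S+}$, and its projection onto $\un_S(1)[2]$ vanishes by the localization sequence above. Summing over pairs $i<j$ and $k \in \{i,j\}$ with the signs imposed by $d$ shows that $\epsilon \circ d$ factors through the split inclusion $\bigoplus_{i \in I} \un_S \hookrightarrow \bigoplus_{i \in I} \un_S \oplus \bigoplus_{i \in I} \un_S(1)[2]$.

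Next I would identify the $\un_S$ component of the factorization. By functoriality of pushforward, the $\un_S$-projection of $\epsilon_{k*} \circ \nu^k_{ij*}$ corresponds to the composite $\partial_{ij}X \xrightarrow{\nu^k_{ij}} \partial_k X \xrightarrow{p_k} S$, which equals $p_{ij}\colon \partial_{ij}X \to S$; pushing forward along $p_{ij}$ into the $k$-th copy of $\un_S$ is precisely the map denoted $p^k_{ij*}$. Reading off the signs in $d = \sum_{i<j}(\nu^i_{ij*} - \nu^j_{ij*})$, the contribution of the pair $(i,j)$ equals $p^i_{ij*} - p^j_{ij*}$, and the total sum is the map $p$ of \eqref{eq:top_part_Mumford}.

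The decomposition $\htp_S(\partial X) \simeq \mathcal{D}_X \oplus \bigoplus_{i \in I} \un_S(1)[2]$ then follows immediately: since $\epsilon \circ d$ factors through a direct summand, its cofiber splits as the cofiber of the restricted map $p$ (that is $\mathcal{D}_X$) together with the complementary summand $\bigoplus_{i \in I} \un_S(1)[2]$. There is no substantive obstacle in the argument; the only non-formal step is the nullity of $\Sigma^\infty \AA^1_{S+} \to \un_S(1)[2]$, which is built into the Morel-Voevodsky purity sequence for $\{\infty\} \hookrightarrow \PP^1_S$, and which crucially uses the choice of representatives arranged in \eqref{eq:Mumford_good_points}.
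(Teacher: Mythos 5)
Your proof is correct, and it supplies precisely the argument the paper leaves implicit: the paper states this lemma with the remark ``The following lemma is immediate'' and gives no proof, the intended point being exactly your observation that hypothesis \eqref{eq:Mumford_good_points} forces each $\epsilon_k\circ\nu^k_{ij}$ to factor through $\AA^1_S$, killing the $\un_S(1)[2]$-components via the cofiber sequence $\Sigma^\infty\AA^1_{S+}\to\Sigma^\infty\PP^1_{S+}\to\un_S(1)[2]$. Your identification of the $\un_S$-components with $p^k_{ij*}$ and the resulting splitting of the cofiber are likewise the intended (and correct) reasoning.
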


One can interpret the decomposition by saying that $\htp_S(\partial X)$ is a sum of the combinatorial 
part $\mathcal D_X$, 
which is a smooth Artin motivic 
spectrum\footnote{By analogy with the case of motives, 
it is the smallest $\infty$-category containing $\Pi_S(V)$ for $V/S$ finite \'etale, 
and stable under suspensions, homotopy (co)fibers.} 
and the ``geometric" part $\oplus_{i \in I} \un_S(1)[2]$.


\begin{num}
Owing to \Cref{thm:maincomputation}, 
we can identify $\htp_S(\bar X/X)$ with the homotopy fiber of the map
$$
d_2=\sum_{i<j} \nu_{ij}^{i!}-\nu_{ij}^{j!}:
\bigoplus_{j \in I} \htp_S(\partial_{j} X,\twist{N_j})
\rightarrow 
\bigoplus_{j<k} \htp_S(\partial_{jk} X,\twist{N_{jk}})
$$
Here, 
$N_j$ (resp. $N_{jk}$) denotes the normal bundle of $\partial_j X$ (resp. $\partial_{jk} X$) in $\bar X$).
The isomorphism $\partial_jX \simeq \PP^1_S$ reduces to considering the homotopy type of $\PP^1_S$ twisted 
by the line bundle on $\PP^1_S$ corresponding to $N_j$ (which is determined by its degree).
\end{num}

\begin{prop}
\label{prop:splittingbylinebundles}
Let $w$ be a virtual vector bundle on $\PP^1_S$ and write $w'=j^*(w)$ for the canonical open immersion 
$j:\AA^1_S \rightarrow \PP^1_S$. 
Suppose $w'$ is constant, 
i.e., 
there exists a virtual vector bundle $w^0$ on $S$ such that $w'=\pi^*(w^0)$, 
where $\pi:\AA^1_S \rightarrow S$ is the 
projection.\footnote{This holds if $S$ is regular by homotopy invariance of $K$-theory of regular schemes. 
Note that $w^0$ is always isomorphic to the restriction of $w$ over the $S$-point zero.}

Then the exact homotopy sequence
$$
\htp_S(\AA^1_S,w') \xrightarrow{j_*} \htp_S(\PP^1_S,w) \rightarrow \htp_S(\PP^1_S/\AA^1_S,w)
$$
is canonically isomorphic to
\begin{equation}\label{eq:twisted_P1}
\Th(w^0) \xrightarrow{s_{0*}} \htp_S(\PP^1_S,w) \xrightarrow{s_\infty^!} \Th(w^\infty)(1)[2]
\end{equation}
Here, 
$w^\infty$ is the restriction of $w$ along the section $s_\infty$ of $\PP^1_S$ at $\infty$, 
and $s_\infty^!$ denotes the associated Gysin morphism.

Assume also that $S=\Spec(\mathcal O)$ is a semi-local essentially smooth scheme over a field $k$.
Then the unit $\eta:\un_S \rightarrow \KMW_*$ of the homotopy module representing Milnor-Witt K-theory 
over $S$ induces an isomorphism
$$
\eta_*:[\htp_S(\PP^1_S,w),\Th(w^0)]=\Pi^0(\PP^1_S,w-\bar \pi^*(w^0)) \xrightarrow{\simeq} \CHt^0(\PP^1_S,L)
$$
The right-hand-side is the Chow-Witt group of $\PP^1_S$ twisted by the line bundle
$$
L=\det(w) \otimes \bar \pi^*(\det(w^0))^{-1}
$$
It follows that \eqref{eq:twisted_P1} splits if and only if $w$ is oriented (see \ref{num:oriented_degree&class}), 
which amounts to assuming $w$ is an even line bundle on
$\PP^1_S$.\footnote{That is, $w$ is isomorphic to $\mathcal O(2n)$.} 
Moreover, 
any choice of orientation of $w$ yields an isomorphism
$$
[\htp_S(\PP^1_S,w),\Th(w^0)] \simeq \GW(\mathcal O)
$$
and therefore a splitting of \eqref{eq:twisted_P1} so that
\[
\htp_S(\PP^1_S,w) \simeq \Th(w^0) \oplus \Th(w^\infty)(1)[2]
\]
\end{prop}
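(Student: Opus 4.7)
The plan is to establish the three claims in order: identification of the outer terms of \eqref{eq:twisted_P1}; the $\eta_*$-isomorphism with twisted Chow-Witt; and the orientation criterion for splitting.

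For the first part, I would exploit smoothness of $\pi:\AA^1_S \to S$ and $\bar\pi:\PP^1_S\to S$. Using the purity identification $\pi_! \simeq \pi_\sharp(-\otimes \Th(-T_\pi))$ together with $\pi^!(\un_S)=\Th(T_\pi)$, we get $\htp_S(\AA^1_S,w') = \pi_\sharp(\Th(w'))$. Since $w'=\pi^*w^0$, the projection formula for $\pi_\sharp$ and $\AA^1$-invariance give $\pi_\sharp(\Th(\pi^*w^0)) \simeq \Th(w^0)\otimes \pi_\sharp(\un)\simeq \Th(w^0)$. For the cofiber, apply \Cref{thm:generalizedhomotopypurity} to the closed immersion $s_\infty:S\hookrightarrow\PP^1_S$, whose normal bundle $N_{s_\infty}$ is a trivial line bundle on $S$: this yields $\htp_S(\PP^1_S/\AA^1_S,w) \simeq \Th(s_\infty^*w + N_{s_\infty}) = \Th(w^\infty)(1)[2]$. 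A routine check identifies the maps $s_{0*}$ and $s_\infty^!$ in \eqref{eq:twisted_P1}.

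For the second part, use the $(\bar\pi_\sharp,\bar\pi^*)$ and $(\bar\pi^*,\bar\pi_*)$ adjunctions together with smoothness and properness of $\bar\pi$ to rewrite
\[
[\htp_S(\PP^1_S,w),\Th(w^0)] \simeq \Pi^{0,0}\bigl(\PP^1_S,\,\bar\pi^*w^0-w\bigr).
\]
The virtual bundle $\bar\pi^*w^0 - w$ has determinant $L^{-1}$ (equivalently, it pairs with $L$ under duality). Then invoke \Cref{lm:htp_semi_local} and the extension to twisted coefficients in \cite{2020arXiv200406634B}: over a semi-local essentially smooth $k$-scheme, the unit $\un\to\HMW\ZZ$ induces an equivalence on the slice $\pi_{0,0}$ with Milnor–Witt $K$-theory in weight zero, so for any smooth $X/S$ (such as $\PP^1_S$) and any virtual bundle $v$, the map $\eta_*$ identifies $\Pi^{0,0}(X,v)$ with $\HMW^{0,0}(X,\twist{v})=\CHt^0(X,\det v)$. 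Applied here, this gives the claimed isomorphism with $\CHt^0(\PP^1_S,L)$.

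Finally, for the splitting analysis, a retraction of $s_{0*}$ is an element of $[\htp_S(\PP^1_S,w),\Th(w^0)]$ restricting to $1\in [\Th(w^0),\Th(w^0)]\simeq\GW(\mathcal O)$ along $s_0$. Under the identification with $\CHt^0(\PP^1_S,L)$, restriction to $s_0$ becomes pullback to the fiber $\{0\}\hookrightarrow\PP^1_S$, which lands in $\CHt^0(S,L|_0)\simeq \GW(\mathcal O)$ only if $L|_0$ is a square in $\Pic(S)$; more importantly, a Gersten-style computation on $\PP^1_S$ (reducing to the known case over a field by residue fields and the $\AA^1$-invariance of $\KMW_0$) shows that $\CHt^0(\PP^1_S,L)$ surjects onto $\GW(\mathcal O)$ via $s_0^*$ exactly when $L$ is an even power of the tautological line bundle, i.e., when $\det(w)\otimes\bar\pi^*(\det w^0)^{-1}$ is a square in $\Pic(\PP^1_S)$; this is equivalent to $w$ being orientable, and a choice of orientation picks out a preferred generator giving the isomorphism with $\GW(\mathcal O)$ and hence a canonical splitting.

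I expect the main obstacle to be step two: verifying that the unit $\eta_*$ gives an isomorphism $\Pi^{0,0}(\PP^1_S,v) \xrightarrow{\simeq} \CHt^0(\PP^1_S,\det v)$ in the stated generality. This requires Morel's identification of $\underline\pi_0(\un)$ in weight zero together with its extension to essentially smooth semi-local bases and compatibility with Thom twists, which is the technical heart of the argument; the orientation dichotomy in step three is then a consequence of the explicit structure of Chow–Witt groups on $\PP^1$.
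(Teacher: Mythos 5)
Your proposal is correct, and its first and third parts follow essentially the paper's route: the affine/projective decomposition via generalized homotopy purity for the $\infty$-section together with the observation that $s_{0*}$ and $\pi_*$ are mutually inverse on $\htp_S(\AA^1_S,w')$, and, for the splitting criterion, the even/odd dichotomy for $\CHt^*(\PP^1_S,L)$ — what you call a ``Gersten-style computation'' is exactly Fasel's projective bundle formula \cite[11.7]{FaselPB}, which is the reference the paper leans on. The genuine divergence is in the middle step. You propose to prove $\eta_*\colon \Pi^{0}(\PP^1_S,v)\xrightarrow{\ \sim\ }\CHt^0(\PP^1_S,\det v)$ directly from Morel's connectivity theorem and the identification of $\underline{\pi}_0(\un)_*$ with Milnor--Witt $K$-theory, applied to the scheme $\PP^1_S$; this does work (the positive homotopy sheaves only contribute in negative Nisnevich cohomological degrees), but it requires the full strength of Morel's computation over the essentially smooth scheme $\PP^1_S$, plus compatibility with Thom twists and with passage from $\SH(k)$ to $\SH(S)$ — precisely the point you flag as the technical heart. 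The paper avoids this entirely: it applies $[-,\eta]$ to the exact sequence \eqref{eq:twisted_P1} already established in the first part and runs a five-lemma argument, so that the only nontrivial identifications are for the outer terms, which are sections over the semi-local base $S$ itself, where \Cref{lm:htp_semi_local} identifies both sides with $\GW(\mathcal O)$ and $\W(\mathcal O)$. The paper's route thus buys economy of input (only the semi-local statement is needed), while yours is more direct but shifts the burden onto a stronger theorem. One small imprecision in your third step: the obstruction to a retraction of $s_{0*}$ is whether $\langle 1\rangle$ lies in the image of $s_0^*$ on the twisted Chow--Witt group of $\PP^1_S$, governed by the parity of $L$ in $\Pic(\PP^1_S)$ (which you do state correctly), not by whether $L|_0$ is a square on $S$ — over a semi-local base every line bundle on $S$ is trivial, so that condition is vacuous.
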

\begin{proof}
The first statement follows by combining our generalized form of the homotopy purity theorem, 
see \Cref{thm:generalizedhomotopypurity2}, 
and the observation that the isomorphisms
\begin{align*}
\Th(w^0)=\htp_S(S,w^0) &\xrightarrow{\sigma_{0*}} \htp_S(\AA^1_S,w')\\
\htp_S(\AA^1_S,w') \simeq \htp_S(\AA^1_S,\pi^*(w^0))
&\xrightarrow{\pi_{*}} \htp_S(S,w^0)=\Th(w^0)
\end{align*}
are mutually inverse to each other (due to our assumption on $w'$).
\vspace{0.1in}

For the second statement we apply the natural transformation $[-,\eta]$ to the exact homotopy sequence 
\eqref{eq:twisted_P1}.
This yields a commutative diagram of exact sequences of abelian groups
(see \cite[2.2.3]{Feld3} for Nisnevich cohomology of the twisted Milnor-Witt sheaf in degree $-1$)
$$
\xymatrix@R=14pt@C=30pt{
0\ar[r] & \Pi^0(\PP^1_S,w-\bar \pi^*w^0)\ar^-{s_0^*}[r]\ar_{\eta_*}[d] & \Pi^0(S,0)\ar^-{s_{\infty*}}[r]\ar^\simeq[d] & 
\Pi^{-1,-1}(S,w^\infty-w^0)\ar^\simeq[d] \\
0\ar[r] & \CHt^1(\PP^1_S,L)\ar^-{s_0^*}[r] & \CHt^0(S)\ar^-{s_{\infty*}}[r] & H^0(S,\KMW_{-1}\{L_S\})
}
$$
The middle isomorphism follows since both groups are isomorphic to $\GW(\mathcal O)$ by \Cref{lm:htp_semi_local}, 
and the right-most isomorphism between copies of $W(\mathcal O)$ follows the same argument 
(Morel's proof of Gersten's conjecture for homotopy sheaves).
Therefore, 
$\eta_*$ is an isomorphism.

Now, 
if $w$ is $\bar \pi$-oriented, 
$L$ being a square and $\CHt^*$ being $\SL^c$-oriented,
one gets for a choice of an $\SL^c$-orientation of $L$ the required canonical isomorphism
$$
\CHt^1(\PP^1_S,L) \simeq \CHt^1(\PP^1_S) \simeq \GW(\mathcal O)
$$
according to \cite[11.7]{FaselPB} for the second isomorphism.
If $L$ is odd, 
then, again by \emph{loc. cit.}, there is an isomorphism $\CHt^1(\PP^1_S,L) \simeq \W(\mathcal O)$,
and $s_{\infty*}$ is isomorphic to the canonical projection $h:\GW(\mathcal O) \rightarrow \W(\mathcal O)$.
Thus the quadratic form $\langle 1 \rangle$ cannot be in the image of 
$s_0^*$.\footnote{If $\mathcal O=k$ is a quadratically closed field, 
the map $h$ is  multiplication by $2$ on $\ZZ$.}
\end{proof}

\begin{num}
\label{num:htp_Mumford}
In the setting of \ref{num:general_Mumford} we assume in addition:
\begin{enumerate}
\item $S=\Spec(\mathcal O)$ is essentially smooth semi-local over a field $k$
\item For each $j \in I$, 
the normal bundle $N_j$ is relatively orientable over $\partial_j X/S$,
and we choose such an isomorphism $o_j:N_j \otimes \omega_{\partial_jX/S} \simeq L^{\otimes 2}$
\end{enumerate}

Owing to \Cref{prop:splittingbylinebundles} we deduce a canonical isomorphism
$$
\epsilon_2^{-1}:
\bigoplus_{j \in I} \htp_S(\partial_{j} X,\twist{N_j}) 
\rightarrow 
\bigoplus_{j \in I} \Th(N_j^0)
\oplus \bigoplus_{j \in I} \Th(N_j^\infty)(1)[2]
$$
\end{num}

We use this to show the following analog of \Cref{lm:Mumford_source}.

\begin{cor}
\label{cor:Mumford_target}
The composite map 
$$
\bigoplus_{j \in I} \Th(N_j^0)
\oplus \bigoplus_{j \in I} \Th(N_j^\infty)(1)[2]
\xrightarrow{\epsilon_2}
\bigoplus_{j \in I} \htp_S(\partial_{j} X,\twist{N_j}) 
\xrightarrow{d_2}
\bigoplus_{j<k} \htp_S(\partial_{jk} X,\twist{N_{jk}})
$$
factors through the inclusion of the second factor of the left-hand-side.
Moreover, 
via this factorization, 
it corresponds to the morphism
$$
p_2=\sum_{j<k} p_{jk}^{j!}-p_{jk}^{k!}:
\bigoplus_{j \in I} \Th(N_j^\infty)(1)[2]
\rightarrow \bigoplus_{j<k} \htp_S(\partial_{jk} X,\twist{N_{jk}})
$$
Here, 
$p_{jk}:\partial_{jk} X \rightarrow S$ is the canonical projection (a finite \'etale map), 
 for $l=j,k$,
 $p_{jk}^{l!}$ is the composition of the inclusion of the $l$-th factor of the left hand-side
 with the obvious Gysin morphim,
and we have identified the twists $\twist{p_{jk}^{-1}N_j^{\infty} \oplus \AA^1}$ and $\twist{N_{jk}}$ 
(as rank two virtual bundles over a semi-local scheme).
In particular, 
the motivic spectrum $\htp_S(\bar X/X)$ is isomorphic to
$$
\mathcal D'_X \oplus \bigoplus_{j \in I} \Th(N_j^0)
$$
Here, 
$\mathcal D_X'$ is the homotopy fiber of $p_2$.
\end{cor}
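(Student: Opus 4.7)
The plan is to verify the factorization and identification of $d_2 \circ \epsilon_2$ summand by summand along the decomposition of \Cref{prop:splittingbylinebundles}, and then formally deduce the decomposition of $\htp_S(\bar X/X)$ by taking homotopy fibers. Vanishing on the first summand $\bigoplus_j \Th(N_j^0)$ will follow from Morel's connectivity theorem; the identification with $p_2$ on the second summand $\bigoplus_j \Th(N_j^\infty)(1)[2]$ will use the associativity of fundamental classes from \cite[Theorem 3.3.2]{DJK}.

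For the vanishing, fix $j<k$ and consider the $(j,k)$-component of $d_2 \circ \epsilon_2$ restricted to $\Th(N_j^0)$, which amounts to computing $\nu_{jk}^{j!} \circ s_{0*}$. It suffices to show that the entire hom group $[\Th(N_j^0), \htp_S(\partial_{jk}X, \twist{N_{jk}})]_{\SH(S)}$ is zero, so that any map between them must vanish. By Thom invertibility this group equals $[\un_S, \Th(-N_j^0) \otimes \htp_S(\partial_{jk}X, \twist{N_{jk}})]_{\SH(S)}$, and by the projection formula for the finite \'etale $p_{jk}\colon\partial_{jk}X \to S$ it becomes $[\un_{\partial_{jk}X}, \Th(N_{jk} - p_{jk}^* N_j^0)]_{\SH(\partial_{jk}X)}$. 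The virtual bundle $N_{jk} - p_{jk}^* N_j^0$ has rank $2-1=1$, so its Thom spectrum is $1$-connective in the Morel $t$-structure and thus $\pi_{0,0}$ vanishes. Symmetrically treating the contribution from the $k$-th summand shows that $d_2 \circ \epsilon_2$ vanishes on the entire first factor, and therefore descends along the projection $\pi_2$ to a map from $\bigoplus_j \Th(N_j^\infty)(1)[2]$.

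Identifying this descended map with $p_2$ is the main technical step. For the $(j,k)$-component emanating from the $j$-th summand, identify $\Th(N_j^\infty)(1)[2]$ with the cofiber $\htp_S(\partial_j X / \AA^1_S, N_j)$ via \Cref{prop:splittingbylinebundles}, and then with $\Th(N_j^\infty + \AA^1)$ via homotopy purity \cite[Theorem 3.2.23]{morelvoevodsky} for the closed immersion $s_\infty$. Since the vanishing argument above applies uniformly, the composite $\nu_{jk}^{j!} \circ \sigma_\infty$ is independent of the chosen section $\sigma_\infty$ of $s_\infty^!$, so it may be computed by applying the associativity formula for fundamental classes in \cite[Theorem 3.3.2]{DJK} to the composition $\partial_{jk}X \xrightarrow{\nu_{jk}^j} \partial_j X \xrightarrow{f_j} S$. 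The resulting map coincides with the Gysin of $p_{jk}\colon\partial_{jk}X \to S$, once the rank-two virtual bundles $\twist{p_{jk}^{-1}N_j^\infty \oplus \AA^1}$ and $\twist{N_{jk}}$ are identified on the semi-local essentially smooth $S$. The main obstacle lies precisely here, in carrying through these twist identifications coherently and ensuring the compatibility of the purity isomorphisms with the associativity formula for fundamental classes.

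Finally, by the discussion immediately preceding the corollary, $\htp_S(\bar X/X) \simeq \mathrm{fib}(d_2)$. Since the factorization $d_2 \circ \epsilon_2 = p_2 \circ \pi_2$ has been established and $\epsilon_2$ is an isomorphism, we obtain $\mathrm{fib}(d_2) \simeq \bigoplus_j \Th(N_j^0) \oplus \mathrm{fib}(p_2) = \bigoplus_j \Th(N_j^0) \oplus \mathcal D_X'$ by the definition of $\mathcal D_X'$.
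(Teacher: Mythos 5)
The paper offers no proof of this corollary at all (it is presented as an ``analog'' of the ``immediate'' \Cref{lm:Mumford_source}), so you are supplying an argument the authors omit, and your overall strategy is sound. The vanishing on the first factor is correct and is genuinely the right mechanism here: unlike \Cref{lm:Mumford_source}, where condition \eqref{eq:Mumford_good_points} makes the pushforward literally factor through $\htp_S(\AA^1_S)$, the Gysin map $\nu_{jk}^{j!}$ does \emph{not} vanish on the image of $\htp_S(\AA^1_S,N_j|_{\AA^1})$ for geometric reasons, so one really needs your degree count: after the projection formula and ambidexterity for the finite \'etale $p_{jk}$, the whole group $[\Th(N_j^0),\htp_S(\partial_{jk}X,\twist{N_{jk}})]$ becomes $[\un,\Th(v)]$ for a rank-one virtual bundle $v$ over the semi-local essentially smooth scheme $\partial_{jk}X$, which vanishes by Morel's connectivity theorem (one should either trivialize $v$ first or add the vanishing of positive-degree Nisnevich cohomology on a semi-local base). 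The observation that $\nu_{jk}^{j!}\circ\sigma_\infty$ is independent of the section, and the final passage to homotopy fibers, are also correct.

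The step that needs tightening is the appeal to associativity. The implicit candidate section is $\sigma_\infty=f_j^!$ for the structure map $f_j\colon\partial_jX\to S$, so that $s_\infty^!\circ f_j^!=(f_j\circ s_\infty)^!=\mathrm{id}$ and $\nu_{jk}^{j!}\circ f_j^!=(f_j\circ\nu_{jk}^{j})^!=p_{jk}^!$, both by \cite[Theorem 3.3.2]{DJK}. But $f_j^!$ lands in $\htp_S(\partial_jX,f_j^{-1}(\cdot)+T_{f_j})$, and this twist differs from $\twist{N_j}$ in $K_0(\partial_jX)$ by $(\deg N_j-2)\big([\mathcal{O}(1)]-[\mathcal{O}]\big)$, which is nonzero unless $N_j$ has degree $2$; there is also a conflict between the ``collapse'' normalization of $s_\infty^!$ used in \Cref{prop:splittingbylinebundles} and the $\tau_f$-twisted normalization for which associativity is stated. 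So one must insert a Thom equivalence \emph{over} $\PP^1_S$ (not merely over $S$) between the two twists; it exists because $\deg N_j$ is even (assumption (2) of \Cref{num:htp_Mumford}) but only up to units, and its restrictions to $\infty$ and to $\partial_{jk}X$ are exactly what produce $\GW(\mathcal O)$-unit ambiguities in the entries of $p_2$. You correctly flag this as the main obstacle; since the corollary itself only asserts the identification of twists ``as rank two virtual bundles over a semi-local scheme,'' your argument reaches the same level of precision as the statement, but a fully rigorous account would fix these identifications once and track them into the coefficients $\mu_{ij}$ of \Cref{thm:smhatoomatrix}.
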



We can now refine \Cref{thm:maincomputation} with the help of \Cref{lm:Mumford_source} and 
\Cref{cor:Mumford_target}.
 
\begin{thm}
\label{thm:smhatoomatrix}
In the setting of \Cref{num:general_Mumford} and \Cref{num:htp_Mumford}, 
the stable motivic homotopy type at infinity $\htp^\infty_S(X)$ is the homotopy fiber of the map 
$$
\beta:
\mathcal D_X \oplus \bigoplus_{i \in I} \un_S(1)[2] 
\rightarrow 
\mathcal D'_X \oplus \bigoplus_{j \in I} \Th(N_j^0)
$$
given by the matrix 
$$
\begin{pmatrix}
a & b' \\
b & \mu
\end{pmatrix}
$$
The map $\mu:\bigoplus_{i \in I} \un_S(1)[2] \rightarrow \bigoplus_{j \in I} \Th(N_j^0)$
is computed by the ``quadratic Mumford matrix"
$$
(\mu_{ij})_{1\leq i,j,\leq n}
$$
defined by the composite
\begin{equation}\label{eq:Mumford_coefficients}
\mu_{ij}:\un_S(1)[2]
\stackrel{\iota_i} \hookrightarrow \htp_S(\partial_{i} X) 
\xrightarrow{\nu_{i*}} \htp_S(\bar X) 
\xrightarrow{\nu_{j}^!} \htp_S(\partial_{j} X,\twist{N_j})
\stackrel{\pi_j} \twoheadrightarrow
\Th(N_j^0)   
\end{equation}
Here, 
$\iota_i$ (resp. $\pi_j$) is the (canonical) split monomorphism (resp. epimorphism) in the isomorphism 
$\epsilon_1$ (resp. $\epsilon_2$) of \Cref{lm:Mumford_source} (resp. \Cref{num:htp_Mumford}).
Moreover, 
if one chooses a trivialization of $N_j^0$ over $S$,
$\mu_{ij}$ becomes an element of $\GW(\mathcal O)$ such that 
$$
\mu_{ij}=(\partial_jX,\partial_iX)^{or}_{quad}
$$
\end{thm}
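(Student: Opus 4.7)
The plan is to assemble the statement from \Cref{thm:maincomputation}, \Cref{lm:Mumford_source} and \Cref{cor:Mumford_target}, and then unwind the composite defining $\mu_{ij}$ to recognize it as the oriented quadratic intersection degree of \Cref{df:orient_quad_intersect}. First I would invoke \Cref{thm:maincomputation}: since $\partial X$ is a simple normal crossing divisor on the smooth proper $\bar X$, there is a canonical identification of $\htp^\infty_S(X)$ with the homotopy fiber of a map
\[
\htp_S(\partial X)\longrightarrow \htp_S(\bar X/X)
\]
built from a pushforward semi-simplicial diagram on the source and a Gysin semi-cosimplicial diagram on the target. Because $\partial X$ is a surface boundary (all multi-intersections have dimension $\leq 1$), the simplicial length involved is $\leq 2$, so the source reduces to the cofiber of $d=\sum_{i<j}(\nu^i_{ij*}-\nu^j_{ij*})$ and the target to the fiber of $d_2=\sum_{j<k}(\nu_{jk}^{j!}-\nu_{jk}^{k!})$.

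Next I would apply the two decompositions. By \Cref{lm:Mumford_source}, together with the $\mathbf{A}^1$-splitting $\htp_S(\mathbf{P}^1_S)\simeq \un_S\oplus \un_S(1)[2]$ transported via the chosen $\epsilon_i$, the source acquires the canonical decomposition $\mathcal D_X \oplus \bigoplus_{i} \un_S(1)[2]$, where the Tate summands come from the ``point at infinity'' of each $\partial_iX\simeq \mathbf P^1_S$ and $\mathcal D_X$ is the Artin part assembled from the $\partial_{ij}X\rightrightarrows \bigsqcup_i S$ via $p$. Dually, \Cref{cor:Mumford_target} together with the orientation hypothesis on each $N_j$ and \Cref{prop:splittingbylinebundles} decompose the target as $\mathcal D'_X \oplus \bigoplus_{j} \Th(N_j^0)$; here the point-at-zero contribution $\Th(N_j^0)$ is precisely the part of $\htp_S(\partial_j X,\langle N_j\rangle)$ that does not feed into the Gysin boundary $d_2$. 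Writing $\beta$ in these decompositions produces a $2\times 2$ block matrix $\left(\begin{smallmatrix}a & b'\\ b & \mu\end{smallmatrix}\right)$, and by construction $\mu$ is exactly the composite of the inclusion of $\bigoplus_i \un_S(1)[2]$, the map $\sum \nu_{j}^{!}\nu_{i*}$ from \Cref{thm:maincomputation}, and the projection to $\bigoplus_j \Th(N_j^0)$, which is the formula \eqref{eq:Mumford_coefficients}.

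Finally I would identify $\mu_{ij}$ with $(\partial_jX,\partial_iX)^{or}_{quad}$. After trivializing $N_j^0$, $\mu_{ij}$ is a morphism $\un_S(1)[2]\to \un_S(1)[2]$, that is, an element of $\GW(\mathcal O)$ via \Cref{lm:htp_semi_local}. To match it with the oriented quadratic intersection degree, I would base change the composite in \eqref{eq:Mumford_coefficients} along $\epsilon_i$ to identify $\iota_i$ with the fundamental class of the point $\infty\in \mathbf P^1_S$, so that $\nu_{i*}\circ\iota_i$ is (up to a suspension shift) the fundamental class $\eta_{\bar X}(\{\mathrm{pt}_i\})$ of a point on $\partial_iX$ that is generic with respect to $\partial_jX$; homotopy invariance on $\partial_iX\simeq \mathbf P^1_S$, guaranteed by condition \eqref{eq:Mumford_good_points}, lets me replace this generic point by the whole $\partial_iX$, turning $\nu_{i*}\circ\iota_i$ into the cycle class $[\partial_iX]_{\bar X}^{or}$. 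Then $\nu_j^!$ is the Gysin pullback to $\partial_jX$, and the projection $\pi_j$ composed with the trivialization of $N_j^0$ is precisely the oriented degree $\tdeg^{or}_{\partial_jX/S}$ of \Cref{num:oriented_degree&class}. Comparing with \Cref{df:orient_quad_intersect} yields $\mu_{ij}=(\partial_jX,\partial_iX)^{or}_{quad}$.

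The hard part will be the last step: making the identification of $\nu_{i*}\circ\iota_i$ with the oriented fundamental class $[\partial_iX]_{\bar X}^{or}$ genuinely canonical, since it requires tracking the interplay between the $\mathbf P^1$-splitting (which depends on $\epsilon_i$), the orientations $o_j$ of $N_j$, and the compatibility of Gysin maps with purity under the factorization $\partial_{ij}X\hookrightarrow \partial_jX\hookrightarrow \bar X$. This is where the associativity and transverse base-change formulas for fundamental classes from \cite[Theorem 3.3.2]{DJK} become essential, and where one needs to carefully invoke \Cref{prop:quadratic-inter} to recognize the resulting Grothendieck-Witt element as the prescribed quadratic intersection degree.
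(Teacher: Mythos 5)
Your assembly of the statement is the paper's own: $\htp^\infty_S(X)$ is the fiber of $\beta\colon\htp_S(\partial X)\to\htp_S(\bar X/X)$ by \Cref{thm:maincomputation} (with the semi-simplicial diagrams of length $2$ since triple intersections are empty), and rewriting $\beta$ through the splittings of \Cref{lm:Mumford_source} and \Cref{cor:Mumford_target} (the latter resting on \Cref{prop:splittingbylinebundles}) produces the block matrix, with $\mu$ equal by construction to the composite \eqref{eq:Mumford_coefficients}. The paper gives no more detail than you do here, so this part is fine.

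The identification $\mu_{ij}=(\partial_jX,\partial_iX)^{or}_{quad}$ is where your argument needs repair. First, $\iota_i$ is not ``the fundamental class of the point $\infty$'': the class of a rational point of $\PP^1_S$ furnishes the \emph{projection} onto the Tate summand (the purity collapse $s_\infty^!$ of \Cref{prop:splittingbylinebundles}), whereas $\iota_i$ is a section of that collapse; and ``homotopy invariance lets me replace this generic point by the whole $\partial_iX$'' conflates a zero-cycle class on the surface $\bar X$ with the divisor class $[\partial_iX]^{or}$ — these live in different twisted groups and no homotopy-invariance argument converts one into the other. The paper reads the composite in the opposite order: by the projection formula, $\nu_j^!\nu_{i*}$ is governed by the pullback $\nu_i^*\eta_{\bar X}(\partial_jX)$ of the fundamental class of $\partial_jX$, so that $\pi_j\circ\nu_j^!\circ\nu_{i*}\circ\iota_i$ is directly $\tdeg_{\partial_iX/S}\big(\nu_i^*[\partial_jX]^{or}\big)=(\partial_jX,\partial_iX)^{or}_{quad}$ as in \Cref{df:orient_quad_intersect}. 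Your reading, even once the identification of $\nu_{i*}\iota_i$ with a cycle class is made rigorous via duality for $\bar X/S$, computes $\tdeg_{\partial_jX/S}(\nu_j^*[\partial_iX]^{or})=(\partial_iX,\partial_jX)^{or}_{quad}$, i.e.\ the transposed entry; this agrees with the assertion only after invoking the symmetry $(Z,T)^{or}_{quad}=\tdeg^{or}_{\bar X/S}([Z]^{or}\cdot[T]^{or})$ of the oriented intersection pairing, an extra input you do not supply. Either restate the last step with the fundamental class attached to $\partial_jX$ and pulled back along $\nu_i$, or add the symmetry argument explicitly.
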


Here, 
we use the natural orientation of the tangent space of $\partial_iX\stackrel{\epsilon_i}\simeq \PP^1_k$ 
(that is, ${\mathcal O}_{\PP^1_k}(2)$) and the chosen orientation of $N_j$ to define the oriented 
quadratic intersection degree on the right hand-side (see Definition \ref{df:orient_quad_intersect}).
Our identification of $\mu_{ij}$ follows from the definitions:
$\nu_j^!$ is essentially determined by the fundamental class $\eta_{\bar X}(\partial_j X)$, 
and the composition with $\nu_{i*}$ is to be understood as the pullback by $\nu_i$ in the homotopy category.

\subsection{Triangulated and abelian mixed motives}

\begin{num}
The motive of a Thom space depends only on the rank of the vector bundle, 
i.e., 
for $\Lambda$-linear motives we do not need to assume normal bundles are relatively oriented.
Thus the motive at infinity $M^\infty(X)$ 
-- or Wildeshaus' boundary motive --
is the homotopy fiber of the canonical map
$$
M(\partial X) \rightarrow M(\partial X)^\vee(2)[4]
$$
associated with the fundamental class of the diagonal of $\partial X/S$ 
(see \Cref{thm:generalizedhomotopypurity2}).
Let $\mathcal D_X^M$ be the smooth Artin motive over $S$ defined as the homotopy cofiber of 
\eqref{eq:top_part_Mumford} seen as a rigid object in $\DM(S,\Lambda)$.
\Cref{thm:maincomputation} implies the next result.
\end{num}

\begin{thm}
\label{thm:smhatoomatrix2}
In the setting of \Cref{num:general_Mumford}, 
the motive at infinity $M^\infty_S(X)$ of the relative surface $X/S$ is the homotopy fiber of the map 
$$
\beta:
\mathcal D^M_X \oplus \bigoplus_{i \in I} \un_S(1)[2] 
\xrightarrow{\begin{pmatrix}
a & b^\vee(2)[4] \\
b & \mu
\end{pmatrix}}
(\mathcal D^M_X)^\vee(2)[4] \oplus \bigoplus_{j \in I} \un_S(1)[2]
$$
Moreover, 
$\mu$ is given by the matrix $(\mu_{ij})_{i,j\in I}$ where
$$
\mu_{ij}=\deg(\nu_i^*(\eta_{\bar X}(\partial_j X)) 
\in 
H^{00}_M(S,\Lambda)
$$
Here, 
the quadratic intersection degree is an integer, $\mu_{ij}\in\ZZ$, in the following cases:
\begin{itemize}
\item $S$ is regular connected and $\Lambda=\QQ$ (use \cite{CD3})
\item $S$ is smooth connected over a field or a Dedekind ring and $\Lambda=\ZZ$ (use \cite{SpiMod})
\end{itemize}
\end{thm}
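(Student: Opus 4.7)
The strategy is to translate Theorem \ref{thm:smhatoomatrix} into the motivic setting, where the Thom isomorphism $\Th(E)\simeq \un(r)[2r]$ for any rank-$r$ vector bundle $E$ renders the orientation choices of \ref{num:htp_Mumford} superfluous. Concretely, I would begin by applying the motivic analog of Theorem \ref{thm:generalizedhomotopypurity2}, valid in any motivic $\infty$-category via the universal realization $\SH(S)\to\DM(S,\Lambda)$, to present $M^\infty_S(X)$ as the homotopy fiber of the canonical map $M(\partial X)\to M(\partial X)^\vee(2)[4]$ induced by the fundamental class of the diagonal of $\partial X$ (the shift $(2)[4]$ reflecting that $\bar X/S$ has relative dimension $2$).

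Next, I would decompose $M(\partial X)$ by mimicking Lemma \ref{lm:Mumford_source}. Theorem \ref{thm:maincomputation}, read in motives, identifies $M(\partial X)$ with the cofiber of $\bigoplus_{i<j} M(\partial_{ij}X)\xrightarrow{d} \bigoplus_{i\in I} M(\partial_i X)$. Using $\epsilon_i\colon \partial_i X\simeq \PP^1_S$ and the canonical splitting $M(\PP^1_S)\simeq \un_S\oplus \un_S(1)[2]$, the target splits as $\bigoplus_{i\in I}\un_S\oplus \bigoplus_{i\in I}\un_S(1)[2]$; since each $\partial_{ij}X$ is finite \'etale over $S$, $M(\partial_{ij}X)$ is an Artin motive, so $d$ automatically factors through the Artin summand. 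This yields
\[
M(\partial X)\simeq \mathcal D^M_X \oplus \bigoplus_{i\in I}\un_S(1)[2],
\]
where $\mathcal D^M_X$ is the cofiber of $p=\sum_{i<j}(p^i_{ij*}-p^j_{ij*})\colon \bigoplus_{i<j} M(\partial_{ij}X)\to \bigoplus_{i\in I}\un_S$.

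Third, I would dualize this splitting. By Theorem \ref{thm:strong_duality}, $M(\partial X)$ is rigid since all $\partial_i X$ and $\partial_{ij}X$ are smooth proper over $S$, and the Tate summand satisfies $(\un_S(1)[2])^\vee(2)[4]\simeq \un_S(1)[2]$. The motivic analog of Corollary \ref{cor:Mumford_target} then produces
\[
M(\partial X)^\vee(2)[4] \simeq (\mathcal D^M_X)^\vee(2)[4]\oplus \bigoplus_{j\in I}\un_S(1)[2].
\]
With respect to these bilateral splittings, the duality map takes the promised matrix form: the diagonal entries $a$ and $\mu$ are endomorphisms of the Artin and Tate parts respectively, while the two off-diagonal entries are identified as $b$ and its transpose $b^\vee(2)[4]$ by the self-adjointness of the pairing (following the commutative diagram of Theorem \ref{thm:generalizedhomotopypurity2}). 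The Mumford block $\mu$ is computed by formula \eqref{eq:Mumford_coefficients} read in motives, and unwinding $\nu_j^!$ via the fundamental class $\eta_{\bar X}(\partial_j X)$ exactly as in the proof of Theorem \ref{thm:smhatoomatrix} identifies each $\mu_{ij}$ with $\deg(\nu_i^*\eta_{\bar X}(\partial_j X))\in H^{00}_M(S,\Lambda)$.

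The only delicate input, and hence the main obstacle, is the identification $H^{00}_M(S,\Lambda)\simeq \Lambda$ in the two listed situations, which ensures that $\mu_{ij}$ is genuinely an integer rather than an element of a larger group of Tate endomorphisms. For $\Lambda=\QQ$ and $S$ regular connected, this is the computation of $\Hom_{\DM_\QQ(S)}(\un_S,\un_S)$ from \cite{CD3}; for $\Lambda=\ZZ$ and $S$ smooth connected over a field or a Dedekind ring, it follows from Spitzweck's computation of motivic cohomology in weight zero \cite{SpiMod}. Once these identifications are in place, the values $\mu_{ij}$ lie in $\ZZ$, recovering the classical Mumford intersection matrix as the underlying rank of the quadratic refinement established in Theorem \ref{thm:smhatoomatrix}.
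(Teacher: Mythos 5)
Your proposal is correct and follows essentially the same route as the paper, which proves this theorem by exactly the reduction you describe: the Thom isomorphism removes the orientation hypotheses, \Cref{thm:generalizedhomotopypurity2} presents $M^\infty_S(X)$ as the fiber of $M(\partial X)\to M(\partial X)^\vee(2)[4]$, and the splittings of \Cref{lm:Mumford_source} and \Cref{cor:Mumford_target} together with \Cref{thm:maincomputation} give the matrix form and the identification of $\mu_{ij}$ with the degree of $\nu_i^*(\eta_{\bar X}(\partial_j X))$ in $H^{00}_M(S,\Lambda)\cong\Lambda$. One small caveat: the factorization of $d$ through the Artin summand is not automatic from $M(\partial_{ij}X)$ being an Artin motive (a map to $\un_S(1)[2]$ is a class in $\Pic$, which need not vanish); it relies on the normalization \eqref{eq:Mumford_good_points} that the intersection points avoid infinity, so that the offending component factors through $M(\AA^1_S)\to M(\PP^1_S/\AA^1_S)\simeq\un_S(1)[2]$, which is null — but since you are invoking \Cref{lm:Mumford_source} as established, this does not affect the argument.
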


Note that $M^\infty_S(X)$ is a triangulated mixed and smooth Artin-Tate motive.
Next we relate the above to the motivic $t$-structure, 
which exists for rational coefficients $\Lambda=\QQ$ when:
\begin{itemize}
\item $S=\Spec(K)$, $K$ is a finite field, or a global field (see \cite{LevineAT})
\item $S=\Spec(O_K)$, $\mathcal O_K$ is a number ring (see \cite{ScholbachAT})
\end{itemize}

The homology motive at infinity of $X/S$ in the abelian category of Artin-Tate mixed motives $\MMAT(S,\QQ)$ 
is defined by 
$$
\HM_i^\infty(X):=\HM_i^\mu(M_S^\infty(X))
$$
where $\HM_i^\mu$ is the $i$-th homology functor for the motivic t-structure
(with standard homological conventions).
\begin{cor}
\label{cor:Mumford_ATmotives}
In the notation of \Cref{thm:smhatoomatrix2}, 
the homology motive at infinity $\HM_i^\infty(X)$ vanishes for $i\not\in [0,3]$ and there is an exact sequence 
in the abelian category $\MMAT(S,\QQ)$ 
\begin{align*}
0 \rightarrow &\HM_3^\infty(X) \rightarrow \bigoplus_{i \in I} \un_S(2)
\xrightarrow{\sum_{i<j} p_{ij}^{i!}-p_{ij}^{j!}} \bigoplus_{i<j} M_S(\partial_{ij} X)(2)  \\
& \rightarrow \HM_2^\infty(X) \rightarrow \bigoplus_{i \in I} \un_S(1) 
\xrightarrow{\ \mu\ } \bigoplus_{j \in I} \un_S(1) \\
& \rightarrow \HM_1^\infty(X) \rightarrow \bigoplus_{i<j} M_S(\partial_{ij} X)
\xrightarrow{\sum_{i<j} p^i_{ij*}-p^j_{ij*}} \bigoplus_{i \in I} \un_S 
\rightarrow \HM_0^\infty(X) \rightarrow 0
\end{align*}
where $\mu$ is the Mumford matrix,
 and $M_S(\partial_{ij}X)=\HM^\mu_0(M_S(\partial_{ij}X))$ is seen
 as an abelian Artin-Tate motive.
\end{cor}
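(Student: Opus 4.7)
The plan is to apply the motivic $t$-structure termwise to the distinguished triangle provided by \Cref{thm:smhatoomatrix2}, namely
$$
M_S^\infty(X) \to \mathcal D^M_X \oplus \bigoplus_{i\in I}\un_S(1)[2] \xrightarrow{\ \beta\ } (\mathcal D^M_X)^\vee(2)[4] \oplus \bigoplus_{j \in I} \un_S(1)[2]
$$
and to identify the motivic homologies of the source and target with explicit Artin-Tate motives. The corollary will then follow by splicing the resulting long exact sequence.

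First, I would pin down $\HM^\mu_*(\mathcal D^M_X)$. By construction $\mathcal D^M_X$ is the cofiber of the map $p = \sum_{i<j}(p_{ij*}^i - p_{ij*}^j)\colon \bigoplus_{i<j} M_S(\partial_{ij}X) \to \bigoplus_i \un_S$, whose source and target are smooth Artin motives lying in the heart. The long exact sequence of motivic homology gives $\HM_0^\mu(\mathcal D^M_X) = \operatorname{coker}(p)$ and $\HM_1^\mu(\mathcal D^M_X) = \ker(p)$, and these are the only nonzero motivic homology groups. Since $\bigoplus_i \un_S(1)[2]$ is concentrated in homological degree $2$ with value $\bigoplus_i \un_S(1)$, the source of $\beta$ has motivic homology concentrated in $[0,2]$.

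Next I would dualize. Because each $p_{ij}\colon \partial_{ij} X \to S$ is finite \'etale, the motive $M_S(\partial_{ij}X)$ is self-dual, and the dual of $p_{ij*}^k$ is the Gysin/unit map $p_{ij}^{k!}$. Hence dualizing the defining triangle of $\mathcal D^M_X$ gives a triangle
$$
(\mathcal D^M_X)^\vee \to \bigoplus_i \un_S \xrightarrow{\ \sum_{i<j} p_{ij}^{i!}- p_{ij}^{j!}\ } \bigoplus_{i<j} M_S(\partial_{ij}X),
$$
so $(\mathcal D^M_X)^\vee$ has motivic homology in degrees $0$ and $-1$ (the kernel and cokernel of this map respectively). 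Twisting by $(2)[4]$ shifts these to homological degrees $4$ and $3$, and thus the target of $\beta$ has motivic homology concentrated in $[2,4]$.

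With source in degrees $[0,2]$ and target in $[2,4]$, the long exact sequence of motivic homology associated with the triangle above collapses: the vanishing $\HM_i^\infty(X)=0$ for $i\notin[0,3]$ is immediate, and the nontrivial portion of the sequence splices into the single long exact sequence displayed in the statement. The middle map $\HM_2^\mu(\text{source}) \to \HM_2^\mu(\text{target})$ is $\mu$ by the explicit computation of $\beta$ in \Cref{thm:smhatoomatrix2}, while the outer connecting maps are the ones identified above. The main obstacle is bookkeeping: one must verify carefully that the connecting morphisms in the long exact sequence agree, under the self-duality $M_S(\partial_{ij}X)^\vee \simeq M_S(\partial_{ij}X)$, with the geometric maps $\sum_{i<j} p_{ij}^{i!}- p_{ij}^{j!}$ and $\sum_{i<j} p_{ij*}^i - p_{ij*}^j$, i.e., that the adjunction between pushforwards and Gysin maps for finite \'etale morphisms is compatible with the motivic $t$-structure on Artin-Tate motives.
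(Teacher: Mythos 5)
Your proof is correct and is precisely the intended derivation (the paper states this corollary without proof, as an immediate consequence of \Cref{thm:smhatoomatrix2}): one applies the motivic $t$-structure to the fiber sequence, notes that the source (resp.\ target) of $\beta$ has homology concentrated in degrees $[0,2]$ (resp.\ $[2,4]$), and splices. Your degree bookkeeping also explains why the entries $a$, $b$, $b^\vee(2)[4]$ of the matrix $\beta$ disappear from the exact sequence — each maps a range of homological degrees into a disjoint higher range, so only $\mu$ survives on $\HM_2$ — and the remaining identification of the outer maps with $\sum p_{ij*}^i-p_{ij*}^j$ and its dual is the routine compatibility you flag.
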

Note in particular that $\HM_0^\infty(X)$ and $\HM_3^\infty(X)$ are pure of respective weights $0$ and $-4$,
while $\HM_1^\infty(X)$ and $\HM_2^\infty(X)$ are in general mixed of weights $\{0,-2\}$ and $\{-2,-4\}$, 
respectively.

\subsection{Three general lines in projective plane}
\label{subsection:cycle-0} Over a field $k$, let $L_1,L_2,L_3\cong \mathbb{P}^1_k$ be three general lines in $\mathbb{P}^2_k$. Let $\sigma:\bar{X}\to \mathbb{P}^2_k$ be the surface obtained by blowing-up one $k$-rational point on each of the lines $L_i$ distinct from the $k$-rational points 
$p_{12}=L_1\cap L_2$, $p_{13}=L_1\cap L_3$, $p_{2,3}=L_2\cap L_3$. Then $\bar{X}$ is a del Pezzo surface of degree $6$ and the proper transform $\partial X$ of $L_1\cup L_2\cup L_3$ on $\bar{X}$ a strict normal crossing divisor; it is the support of an anti-canonical divisor on $\bar{X}$ and thus an ample divisor. We now compute the homology motive at infinity of the affine surface 
$X=\bar{X}- \partial X$. In the exact sequence of Corollary \ref{cor:Mumford_ATmotives} the homomorphisms 
$\bigoplus_{i<j}M_k(\partial_{ij}(X))\cong \bigoplus_{i=1}^{3}  \un_k\rightarrow \bigoplus_{i\in I} \un_k \cong \bigoplus_{i=1}^{3} \un_k$ 
and 
$ \bigoplus_{i\in I} \un_k(2) \cong \bigoplus_{i=1}^{3} \un_k(2) \rightarrow \bigoplus_{i<j}M_k(\partial_{ij}(X))(2)\cong \bigoplus_{i=1}^{3} \un_k(2)$ are given respectively by the matrix $$N=\left(\begin{array}{ccc} 1 & 1 & 0 \\ -1 & 0 & 1\\ 0 & -1 & -1 \end{array}\right)$$ and its transpose. Moreover, since $L_i$ has trivial normal bundle in $\bar{X}$ for $i=1,2,3$, 
the quadratic Mumford intersection matrix equals 
$$
\mu=
\left(\begin{array}{ccc} 0 & \langle 1\rangle & \langle 1\rangle \\ 
\langle 1\rangle & 0 & \langle 1\rangle \\ 
\langle 1\rangle & \langle 1\rangle & 0 
\end{array}\right)
$$ 
The matrix $N\in \mathcal{M}_{3,3}(\mathbb{Z})$ is equivalent to
 the diagonal matrix $\mathrm{diag}(1,1,0)$
 and the matrix $\mu$ in $\mathcal{M}_{3,3}(\mathrm{GW}(k))$ is equivalent to the diagonal matrix $\mathrm{diag}(1,1,2)$. In summary, the above yields 
$$\underline{H}_i^{\infty}(X)= 
\left\{ \begin{array}{ll} 
\un_k &  i=0 \\
(\un_k/2)(1) \oplus \un_k &  i=1 \\
\un_k(2) &  i=2 \\
\un_k(2) & i=3
\end{array} 
\right. 
$$

\subsection{Danielewski hypersurfaces}
\label{subsection:examples}

For a field $k$ and $n\geq 1$, 
the Danielewski hypersurface $D_{n}$ is the smooth affine surface $D_{n}$ in $\mathbf{A}_{k}^{3}$ cut out by 
the equation $x^{n}z=y(y-1)$. 
Owing to \cite{Danielewskisurfaces}, 
$D_{n}$ becomes a Zariski locally trivial $\mathbf{G}_{a}$-bundle over the affine line with two origins 
$\breve{\mathbf{A}}_{k}^{1}$ 
(using the factorization of the surjective projection $\pi_{n}=\mathrm{pr}_{x}:D_{n}\to\mathbf{A}_{k}^{1}$).  
Thus $D_{n}$ is $\mathbf{A}^{1}$-equivalent to $\breve{\mathbf{A}}_{k}^{1}$ and $\mathbf{P}_{k}^{1}$. 
The threefolds $D_n\times \mathbb{A}^1_k$ are isomorphic, 
but the surfaces $D_n$ are pairwise non-isomorphic.
Over $\mathbb{C}$, 
Danielewski \cite{Danielewskisurfaces}, Fieseler \cite{fieseler} established this by showing 
the underlying complex analytic manifolds have non-isomorphic first singular homology groups at infinity. 
Our methods provide a base field independent argument which allows to distinguish between the $D_n$'s via 
homology motives at infinity. 
\vspace{0.1in}

We begin by constructing explicit smooth projective completions $\bar{D}_{n}$ of the surfaces $D_{n}$, 
whose boundaries are strict normal crossing divisors. 
The morphism $\varphi_{n}=\mathrm{pr}_{x,y}:D_{n}\to\mathbf{A}_{k}^{2}$ expresses $D_{n}$ as the 
affine modification of $\mathbf{A}_{k}^{2}$ with center at the closed subscheme $Z_{n}$ with ideal 
$(x^{n},y(y-1))$ and divisor $D_{n}=\mathrm{div}(x^{n})$, 
cf. \cite{zbMATH07149737}.
Furthermore, 
$\varphi_{n}$ decomposes into a sequence of affine modifications
\begin{equation}
\label{equation:affinemodifications}
\varphi_{n}
=
\varphi_{1}\circ\psi_{2}\cdots\circ\psi_{n} 
\colon 
D_{n}\to D_{n-1}\to\cdots D_{2}\to D_{1}
\to
{\mathbf{A}}_{k}^{2}
\end{equation}
given by $\psi_{\ell}:D_{\ell}\to D_{\ell-1}$; $(x,y,z)\mapsto(x,y,xz)$, 
with center at the closed subscheme $Y_{\ell-1}=(x,z)$ and divisor $H_{\ell}=\mathrm{div}(x)$.
That is, 
$\varphi_{1}:D_{1}\to\mathbf{A}_{k}^{2}$ is the birational morphism obtained by blowing-up the points $(0,0)$, 
$(0,1)$ in $\mathbf{A}_{k}^{2}$ and removing the proper transform of $\{0\}\times\mathbf{A}_{k}^{1}$, 
and $\psi_{\ell}:D_{\ell}\to D_{\ell-1}$ is the birational morphism obtained by blowing-up the points $(0,0,0)$, 
$(0,0,1)$ in $\pi_{\ell}^{-1}(0)$ and removing the proper transform of $\pi_{\ell-1}^{-1}(0)$.

Now consider the open embedding $\mathbf{A}_{k}^{2}\hookrightarrow\mathbf{P}_{k}^{1}\times\mathbf{P}_{k}^{1}$;
$(x,y)\mapsto([x:1],[y:1])$. 
Then $C_{\infty}=\mathbf{P}_{k}^{1}\times[1:0]$ and $F_{\infty}=[1:0]\times\mathbf{P}_{k}^{1}$ are irreducible
components of $\mathbf{P}_{k}^{1}\times\mathbf{P}_{k}^{1}$ and we set $F_{0}=[0:1]\times\mathbf{P}_{k}^{1}$. 
Let $\bar{\varphi}_{1}:\bar{D}_{1}\to\mathbf{P}_{k}^{1}\times\mathbf{P}_{k}^{1}$ be the blow-up of the points 
$([0:1],[0:1])$, $([0:1],[1:1])$ in $F_{0}$, with respective exceptional divisors $E_{1,0}$, $E_{1,1}$.
From now on the proper transform of $F_{0}$ in $\bar{D}_{1}$ is also denoted by $F_{0}$.
With these definitions, there is a commutative diagram 
$$
\xymatrix@=24pt{
D_{1} \ar_{\varphi_{1}}[d]\ar[r] & \bar{D}_{1} \ar^{\bar{\varphi}_{1}}[d] \\
\mathbf{A}^{2} \ar[r] & \mathbf{P}^{1}\times\mathbf{P}^{1}
}
$$
Here, 
$D_{1}\hookrightarrow\bar{D}_{1}$ is the open immersion given by the complement of the support of the 
strict normal crossing divisor $\partial D_{1}=C_{\infty}\cup F_{\infty}\cup F_{0}$.
The closures in $\bar{D}_{1}$ of the two irreducible components $\{x=y=0\}$
and $\{x=y-1=0\}$ of $\pi_{1}^{-1}(0)$ equal the exceptional divisors $E_{1,0}$ and $E_{1,1}$, 
respectively. 
We calculate the self-intersection numbers $C_{\infty}^{2}=F_{\infty}^{2}=0$, $F_{0}^{2}=-2$ in $\bar{D}_1$; 
that is, 
the usual degrees of the respective normal line bundles of these curves in $\bar{D}_1$, 
see e.g., 
\cite[Chapter 5.6]{zbMATH00790015}, \cite[Chapter IV]{zbMATH06176082}.

To construct $\bar{D}_{n}$, 
$n\geq2$, 
we start with $\bar{D}_{1}$ and proceed inductively by performing the same sequence of blow-ups as 
for the affine modifications $\psi_{\ell}:D_{l}\to D_{\ell-1}$ in \eqref{equation:affinemodifications}.
This yields birational morphisms $\bar{\psi}_{\ell}:\bar{D}_{\ell}\to\bar{D}_{\ell-1}$
consisting of the blow-up of one point on $E_{\ell,0}-E_{\ell-1,0}$ and another point on 
$E_{\ell,1}-E_{\ell-1,1}$ with respective exceptional divisors $E_{\ell+1,0}$ and $E_{\ell+1,1}$
(by convention $E_{0,0}=E_{0,1}=F_{0})$. 
Moreover, 
$D_{\ell}$ embeds into $\bar{D}_{\ell}$ as the complement of the support of the strict normal crossing divisor 
$\partial D_{\ell}=C_{\infty}\cup F_{\infty}\cup F_{0}\cup\bigcup_{i=1}^{\ell-1}(E_{i,0}\cup E_{i,1})$ in such 
a way that the closures of the two irreducible components $\{x=y=0\}$ and $\{x=y-1=0\}$ of $\pi_{\ell}^{-1}(0)$ 
coincide with the divisors $E_{\ell+1,0}$ and $E_{\ell+1,1}$, respectively.
By construction, there is a commutative diagram 
$$
\xymatrix@=22pt{
\bar{D}_{\ell} \ar^-{\bar{\psi_{\ell}}}[r] & \bar{D}_{\ell-1} \ar[r] & \cdots \ar[r] & \bar{D}_{2} 
\ar^-{\bar{\psi_{2}}}[r]  & \bar{D}_{1} \ar^-{\bar{\varphi}_{1}}[r] & \mathbf{P}^{1}\times\mathbf{P}^{1}\\
D_{\ell} \ar^-{\psi_{\ell}}[r] \ar[u] & D_{\ell-1} \ar[r] \ar[u] & \cdots \ar[r] & 
D_{2} \ar^-{\psi_{2}}[r] \ar[u] & D_{1} \ar^-{\varphi_{1}}[r] \ar[u] & \mathbf{A}^{2} \ar[u] }
$$
For every $n\geq2$, 
we may visualize the boundary divisor $\partial D_{n}$ as a fork of $\mathbf{P}^{1}$'s 
\[
\xymatrix{ 
& & & (E_{1,0},-2) \ar@{-}[r] & \cdots \ar@{-}[r]  & (E_{n-1,0},-2) \\
(F_{\infty},0) \ar@{-}[r] & (C_{\infty},0) \ar@{-}[r] & (F_{0},-2) \ar@{-}[ur] \ar@{-}[dr] \\
& & & (E_{1,1},-2) \ar@{-}[r] & \cdots \ar@{-}[r] & (E_{n-1,1},-2) \\}
\]
with the indicated self-intersection numbers for each irreducible component. 
We may order the irreducible components of $\partial D_n$ as follows
$$
F_\infty < C_\infty < F_0  < E_{1,0} < \ldots < E_{n-1,0} < E_{1,1} < \ldots < E_{n-1,1}
$$
The Euler class of $\mathcal{O}_{\mathbf{P}^{1}}(2)$ equals the hyperbolic plane 
$\mathbb{H}=\langle1,-1\rangle\in \GW(k)$.
Thus the Euler class of $\mathcal{O}_{\mathbf{P}^{1}}(-2)$ is $-\mathbb{H}$. 
Moreover, by Proposition \ref{prop:quadratic-inter}, 
the transversal intersection of two copies of $\mathbf{P}^{1}$ in a unique $k$-rational point 
yields the class $\langle1\rangle\in\GW(k)$.
The quadratic Mumford intersection matrix thus takes the form (with zero entries mostly left out of the notation) 
\[
\mu_{n}=\left(\begin{array}{ccccccccc}
0 & \langle1\rangle\\
\langle1\rangle & 0 & \langle1\rangle\\
 & \langle1\rangle & -\mathbb{H} & \langle1\rangle &  0 &  & \langle1\rangle\\
 &  & \langle1\rangle & -\mathbb{H} & \langle1\rangle \\
 &  &  0 & \langle1\rangle & \ddots & \langle1\rangle\\
 &  &  &  & \langle1\rangle & -\mathbb{H} & 0\\
 &  & \langle1\rangle &  &  & 0 & -\mathbb{H} & \langle1\rangle\\
 &  &  &  &  &  & \langle1\rangle & \ddots & \langle1\rangle \\
 &  &  &  &  &  &  & \langle1\rangle & -\mathbb{H}
\end{array}\right)
\]
 
\begin{prop} 
\label{prop:hmDsurfaces}
Over a field $k$ and $n\geq 1$, 
the homology motives at infinity of the Danielewski surfaces $D_n$ are given by 
$$\underline{H}_i^{\infty}(D_n)= 
\left\{ \begin{array}{ll} 
\un_k &  i=0 \\
(\un_k/2n)(1) &  i=1 \\
0 &  i=2 \\
\un_k(2) & i=3
\end{array} \right. 
$$
\end{prop}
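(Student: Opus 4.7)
The plan is to apply \Cref{cor:Mumford_ATmotives} with $X=D_n$ and $\bar X=\bar D_n$, whose boundary $\partial D_n$ is the strict normal crossing divisor of rational curves described above. Each nonempty intersection $\partial_{ij}D_n$ is a single $k$-rational point, so $M_k(\partial_{ij}D_n)=\un_k$; the dual graph of $\partial D_n$ is a tree with $2n+1$ vertices and $2n$ edges (a ``Y'' with a tail of length $2$ through $C_\infty, F_\infty$ and two legs of length $n-1$ through the $E_{k,0}$ and $E_{k,1}$, all meeting at $F_0$). The combinatorial part is easy: the signed incidence map $\bigoplus_{\text{edges}}\un_k\to\bigoplus_{\text{vertices}}\un_k$ of a connected tree is injective with cokernel $\un_k$ via the augmentation, and its Gysin dual is surjective with kernel $\un_k(2)$. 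Reading this off from the exact sequence of \Cref{cor:Mumford_ATmotives} immediately yields $\underline H_0^\infty(D_n)=\un_k$ and $\underline H_3^\infty(D_n)=\un_k(2)$, and identifies
\[
\underline H_1^\infty(D_n)\simeq\operatorname{coker}(\mu_n),\qquad \underline H_2^\infty(D_n)\simeq\ker(\mu_n).
\]

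The second step is to compute the kernel and cokernel of the Mumford matrix $\mu_n$ displayed above the statement. Since $\Hom(\un_k(1),\un_k(1))=\ZZ$ in integral Artin-Tate motives, reducing the $\GW(k)$-entries by their ranks ($\operatorname{rk}\langle 1\rangle=1$, $\operatorname{rk}(-\mathbb H)=-2$) replaces $\mu_n$ by the symmetric integer matrix $M_n\in M_{2n+1}(\ZZ)$ with $-2$ on the diagonal for $F_0$ and the $E_{k,j}$, with $0$ on the diagonal for $F_\infty, C_\infty$, and with $1$ for intersecting boundary components. The plan is then to compute the Smith normal form of $M_n$. Partition the vertex set as $\{F_\infty, C_\infty\}\sqcup\{F_0,E_{k,j}\}$ and write
\[
M_n=\begin{pmatrix}A & B\\ B^T & Q_n\end{pmatrix},\qquad A=\begin{pmatrix}0&1\\1&0\end{pmatrix},
\]
where $B$ has a single nonzero entry $B_{C_\infty,F_0}=1$. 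Since $A^{-1}=A$ is integral and a direct computation gives $B^T A^{-1}B=0$, the integral block-LU factorization shows $M_n\sim_{\GL_{2n+1}(\ZZ)} A\oplus Q_n$.

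The block $A$ has Smith form $\operatorname{diag}(1,1)$. After reordering the remaining vertices along the path $E_{n-1,0}-\cdots-E_{1,0}-F_0-E_{1,1}-\cdots-E_{n-1,1}$, the block $Q_n$ becomes the negative of the Cartan matrix of type $A_{2n-1}$, whose Smith normal form is the classical $\operatorname{diag}(1,\ldots,1,2n)$ (easy inductive clearing from the end of the chain). Consequently $M_n$ has Smith form $\operatorname{diag}(1,\ldots,1,2n)$: it is injective with cokernel $\ZZ/2n$, giving $\underline H_2^\infty(D_n)=0$ and $\underline H_1^\infty(D_n)=(\un_k/2n)(1)$. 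The only real obstacle is the integral equivalence $M_n\sim A\oplus Q_n$; this relies critically on the unimodularity of the tail block $A$ (owing to $F_\infty^2=C_\infty^2=0$ with $F_\infty\cdot C_\infty=1$), which is what makes the integral Schur-complement reduction work and isolates the $A_{2n-1}$ Cartan lattice responsible for the torsion invariant $2n$.
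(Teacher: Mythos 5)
Your proposal is correct and follows essentially the same route as the paper: apply \Cref{cor:Mumford_ATmotives}, use that the dual graph of $\partial D_n$ is a tree with $2n+1$ vertices and $2n$ edges to get $\underline H_0^\infty=\un_k$, $\underline H_3^\infty=\un_k(2)$, $\underline H_1^\infty=\operatorname{coker}(\mu_n)$, $\underline H_2^\infty=\ker(\mu_n)$, and then diagonalize the Mumford matrix. The only difference is that where the paper asserts an equivalence of $\mu_n$ to $\mathrm{diag}(1,\ldots,1,n\mathbb H)$ over $\GW(k)$ by unspecified row and column operations, you pass to ranks and give an explicit integral Smith normal form via the Schur-complement reduction to the $A_{2n-1}$ Cartan matrix — a welcome amplification of a step the paper leaves to the reader, yielding the same invariant $2n$.
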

\begin{proof}
The boundary $\partial D_n$ has $2n+1$ irreducible components which intersect in $2n$ $k$-rational points. As in Example \ref{subsection:cycle-0}, one infers from the description of $\partial D_n$ that the homomorphism $$\bigoplus_{i<j}M_k(\partial_{ij}(D_n))\cong \bigoplus_{i=1}^{2n}  \un_k\rightarrow \bigoplus_{i\in I} \un_k \cong \bigoplus_{i=1}^{2n+1} \un_k$$ 
is given by an element $N$ of $\mathcal{M}_{2n,2n+1}(\mathbb{Z})$ equivalent to $\mathrm{I}_{2n}$. This implies that $\underline{H}_0^\infty(D_n)=\un_k$ and $\underline{H}_1^\infty(D_n)=\mathrm{Coker}(\mu_n)$. Since the homomorphism $$ \bigoplus_{i\in I} \un_k(2) \cong \bigoplus_{i=1}^{2n+1} \un_k(2) \rightarrow \bigoplus_{i<j}M_k(\partial_{ij}(D_n))(2)\cong \bigoplus_{i=1}^{2n} \un_k(2)$$ is given by the transpose of $N$, it follows that $\underline{H}_3^\infty(D_n)=\un_k(2)$ and $\underline{H}_2^\infty(D_n)=\mathrm{Ker}(\mu_n)$. 
Finally, one can check by elementary transformations on rows and columns that as an element of $\mathcal{M}_{2n+1,2n+1}(\mathrm{GW}(k))$, the quadratic Mumford matrix $\mu_n$ is equivalent to the diagonal matrix $\mathrm{diag}(1,\ldots ,1, n\mathbb{H})$. This implies in turn that  $\underline{H}_2^\infty(D_n)=0$ and $\underline{H}_3^\infty(D_n)=(\un_k/2n)(1)$.
\end{proof}
%
%

\bibliographystyle{amsalpha}
\bibliography{htpinfty}

\end{document}